      \theoremstyle{plain}
      \newtheorem{theorem}{Theorem}[section]
      \newtheorem{lemma}[theorem]{Lemma}
            \newtheorem{claim}[theorem]{Claim}
      \newtheorem{corollary}[theorem]{Corollary}
            \newtheorem{observation}[theorem]{Observation}
      \theoremstyle{definition}
      \theoremstyle{remark}
\title{Erd\H os-Szekeres-type theorems\\ for monotone paths and convex bodies}
\author{Jacob Fox \thanks{Massachusetts Institute of Technology, Cambridge, MIT. Supported by a Simons Fellowship. Email:{\tt fox@math.mit.edu}} \and J\'anos Pach\thanks{City College and Courant Institute, New
York and EPFL, Lausanne. Supported by Grants from NSF, NSA, PSC-CUNY, BSF, OTKA, and SNF.
Email: {\tt pach@cims.nyu.edu}} \and Benny Sudakov\thanks{Department of Mathematics,
UCLA,  Los Angeles, CA 90095. Email: {\tt bsudakov@math.ucla.edu}. Research
supported in part by NSF CAREER award DMS-0812005 and by a
USA-Israeli BSF grant.} \and Andrew Suk\thanks{Courant Institute, New York and EPFL, Lausanne. Email: {\tt suk@cims.nyu.edu}}
}
\begin{document}

\maketitle

\begin{center}{\small
\em{Dedicated to the 75th anniversary of the publication of the Happy Ending Theorem}}
\end{center}

\medskip

\begin{abstract}

For any sequence of positive integers $j_1 < j_2 < \cdots < j_n$, the $k$-tuples $(j_i,j_{i + 1},...,j_{i + k-1}),$
$i=1, 2, \ldots, n - k+1$, are said to form a {\em monotone path} of length $n$. Given any integers $n\ge k\ge 2$ and $q\ge 2$, what is the smallest integer $N$ with the property that no matter how we color all $k$-element subsets of $[N]=\{1,2,\ldots, N\}$ with $q$ colors, we can always find a monochromatic monotone path of length $n$? Denoting this minimum by $N_k(q,n)$, it follows from the seminal 1935 paper of Erd\H os and Szekeres that $N_2(q,n)=(n-1)^q+1$ and $N_3(2,n) = {2n -4\choose n-2} + 1.$ Determining the other values of these functions appears to be a difficult task. Here we show that
$$2^{(n/q)^{q-1}} \leq N_3(q,n) \leq 2^{n^{q-1}\log n},$$ for $q \geq 2$ and $n \geq q+2$. Using a ``stepping-up" approach that goes back to Erd\H os and Hajnal, we prove analogous bounds on $N_k(q,n)$ for larger values of $k$, which are towers of height $k-1$ in $n^{q-1}$. As a geometric application, we prove the following extension of the Happy Ending Theorem. Every family of at least $M(n)=2^{n^2 \log n}$ plane convex bodies in general position, any pair of which share at most two boundary points, has $n$ members in convex position, that is, it has $n$ members such that each of them contributes a point to the boundary of the convex hull of their union.
\end{abstract}

\section{Introduction}
The classic 1935 paper of Erd\H os and Szekeres \cite{e} published in {\em Compositio Mathematica} was a starting point of a very rich discipline within combinatorics: Ramsey theory (see, e.g., \cite{graham}). Erd\H os liked to call the main result of this paper the ``Happy Ending Theorem," as its discovery was triggered by a geometric observation of Esther Klein, and the authors' collaboration with her eventually led to the marriage of Klein and Szekeres.

At the early stages of its development, Ramsey theory focused on the emergence of large monochromatic {\em complete} subgraphs in colored graphs and hypergraphs. In the 1960s and 70s, many initial results of Ramsey theory were revisited in a more general setting. For instance, Gerencs\'er and Gy\'arf\'as \cite{GeGy67} proved that for any 2-coloring of the edges of a complete graph with roughly $3n/2$ vertices, there is a monochromatic path of length $n$. Recently, Figaj and {\L}uczak \cite{FiLu07} and, in a more precise form, Gy\'arf\'as et al.~\cite{GyR07} have settled an old conjecture of Faudree and Schelp, according to which any 3-colored complete graph with roughly $2n$ vertices has a monochromatic path of length $n$. It follows from a theorem of Erd\H os and Gallai~\cite{EG59} that for every positive integer $q$ there exists a constant $c(q)\le q$ such that no matter how we color the edges of the complete graph $K_{c(q)n}$ with $q$ colors, we can always find a monochromatic path of length $n$. The smallest value of the constant $c(q)$ is not known.

The problem becomes simpler if we consider {\em ordered} complete graphs and we want to find a long {\em increasing} path such that all of its edges are of the same color. It follows from the results of Erd\H os and Szekeres \cite{e}, and also from Dilworth's theorem on partially ordered sets \cite{dil}, that for any 2-coloring of the edges of a complete graph with more than $(n-1)^2$ vertices, one can find a monochromatic monotone path with $n$ vertices. An easy construction shows that this statement does not remain true for graphs with $(n-1)^2$ vertices. The result and its proof readily generalize to colorings with $q\geq 2$ colors: in this case, we have to replace $(n-1)^2$ with $(n-1)^q$.

\medskip

The corresponding problems for hypergraphs are quite hard and their solution usually utilizes some variant or extension of the celebrated hypergraph regularity lemma. The only asymptotically sharp result of this kind was proved by Haxell, {\L}uczak et al.~\cite{HaLu09}. Let $H=(V(H),E(H))$ be a $k$-uniform hypergraph. A \emph{tight path} or, shortly, a {\em path of length} $n$ in $H$ is comprised of a set of $n$ distinct vertices $v_1, v_2,\ldots,v_n\in V(H)$ and the set of $n-k+1$ hyperedges of the form
$$\{v_i,v_{i + 1},...,v_{i + k - 1}\} \in E(H),$$
where $i=1, 2, \ldots, n - k+1$. It was shown in~\cite{HaLu09} that no matter how we 2-color all triples of a complete 3-uniform hypergraph on $4n/3$ vertices, one of the color classes will contain a path of length roughly $n$. This result is best possible. It was generalized by Nagle, Olsen et al.~\cite{NaOl08},  Cooley, Fountoulakis et al.~\cite{CoFo09}, and Conlon et al.~\cite{conlon}. These results imply that every $q$-coloring of the hyperedges of a complete $k$-uniform hypergraph on $Cn$ vertices contains a monochromatic copy of any $n$-vertex hypergraph with maximum degree $\Delta$, where $C=C(k,q,\Delta)$ is a suitable constant depending only on $k$, $q$, and $\Delta$.

\medskip

In the present paper, we consider the analogous problem for {\em ordered} hypergraphs. Let $K^k_N$ denote the {\em complete $k$-uniform hypergraph}, consisting of all $k$-element subsets ($k$-tuples) of the vertex set $[N] = \{1,2,...,N\}$. For any $n$ positive integers, $j_1 < j_2 < \cdots < j_n$, we say that the hyperedges
$$\{j_i,j_{i + 1},...,j_{i + k-1}\},$$
$i=1, 2, \ldots, n - k+1$, form a {\em monotone (tight) path of length} $n$.  We will present several results on finding monochromatic monotone paths in edge-colored ordered hypergraphs.  As a geometric application, in Section 7 we will show that every family of at least $2^{n^2 \log n}$ plane convex bodies in general position, any pair of which share at most two boundary points, has $n$ members in convex position. This substantially improves the previous double-exponential bound of Hubard et al. \cite{alfredo} for this problem. Here, and throughout the paper, all logarithms unless otherwise stated are in base 2.

\subsection{Ramsey numbers for monotone paths}

Let $N_k(q,n)$ denote the smallest integer $N$ such that for every $q$-coloring of the hyperedges ($k$-tuples) of $K^k_N$, there exists a monotone path of length $n$ such that all of its hyperedges
are of the same color. Using this notation for graphs $(k=2)$, the corollary of the Erd\H os-Szekeres theorem or the Dilworth theorem mentioned above can be rephrased as
\begin{equation}\label{dil}
N_2(q,n)=(n-1)^q+1.
\end{equation}

For 3-uniform hypergraphs, a straightforward generalization of the cup-cap argument of Erd\H os and Szekeres \cite{e} gives

\begin{theorem}\label{erdos-szekeres}
Let $N=N_3(2,n)$ be the smallest integer such that for every $2$-coloring of all triples in $[N]$, there exists a monochromatic monotone path of length $n$. Then we have
 $$N_3(2,n) = {2n -4\choose n-2} + 1.$$
\end{theorem}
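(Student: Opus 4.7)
My plan is to generalize the classical cup--cap argument of Erd\H os and Szekeres to an arbitrary $2$-coloring of triples. I would introduce the auxiliary quantity $f(a,b)$ defined as the smallest $N$ such that every red/blue coloring of the triples of $[N]$ contains either a red monotone path of length $a$ or a blue monotone path of length $b$, and prove the asymmetric bound $f(a,b) \leq \binom{a+b-4}{a-2}+1$ by induction on $a+b$. The theorem then follows by setting $a=b=n$. The base case $a=2$ or $b=2$ is immediate since every pair of vertices already forms a monotone path of length $2$, and $\binom{a+b-4}{a-2}+1 = 2$ in this case.

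For the inductive step, set $N = \binom{a+b-4}{a-2}+1$ and fix a coloring with no red $a$-path and no blue $b$-path; I will derive a contradiction. Define $E \subseteq [N]$ to be the set of endpoints of red monotone $(a-1)$-paths. The heart of the argument is the following sub-claim, which I expect to be the one genuinely delicate step: $E$ contains no blue monotone $(b-1)$-path. To prove it, suppose $p_1 < p_2 < \cdots < p_{b-1}$ is such a path inside $E$, and let $q$ denote the penultimate vertex of any red $(a-1)$-path ending at $p_1$. Look at the color of the triple $\{q, p_1, p_2\}$: if it is red, appending $p_2$ to the red path produces a red $a$-path (the only new triple is $\{q,p_1,p_2\}$, and the earlier triples of the red path are unchanged); if it is blue, prepending $q$ to the blue path produces a blue $b$-path (again only $\{q,p_1,p_2\}$ is new, and it is blue). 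Either alternative contradicts our standing assumption.

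Given the sub-claim, two applications of the inductive hypothesis finish the upper bound. Since $E$ contains neither a blue $(b-1)$-path nor a red $a$-path, the definition of $f$ forces $|E| \leq f(a,b-1) - 1 \leq \binom{a+b-5}{a-2}$. The complement $F = [N]\setminus E$ contains no red $(a-1)$-path (any such path's endpoint would lie in $E$) and no blue $b$-path, so $|F| \leq f(a-1,b) - 1 \leq \binom{a+b-5}{a-3}$. Adding these two estimates and applying Pascal's identity gives $N = |E|+|F| \leq \binom{a+b-4}{a-2}$, contradicting the choice of $N$.

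For the matching lower bound, I would invoke the classical Erd\H os--Szekeres construction of a set $P$ of $\binom{2n-4}{n-2}$ points in general position in the plane that contains no $n$-cup and no $n$-cap, and color each triple of $P$ red if it forms a cup and blue if it forms a cap. Since the cup (respectively cap) condition imposed on every three consecutive elements of an ordered sequence is equivalent to its slopes being monotone overall, a monochromatic monotone $n$-path in this coloring is exactly an $n$-cup or an $n$-cap, neither of which exists by construction. Hence $N_3(2,n) > \binom{2n-4}{n-2}$, completing the proof.
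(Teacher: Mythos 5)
Your proof is correct, and it is precisely the ``straightforward generalization of the cup--cap argument of Erd\H os and Szekeres'' that the paper invokes for this theorem but does not spell out: you introduce the asymmetric quantity $f(a,b)$, prove $f(a,b)\leq \binom{a+b-4}{a-2}+1$ by induction on $a+b$ via the set $E$ of endpoints of red $(a-1)$-paths (the key sub-claim that $E$ has no blue $(b-1)$-path being the exact analogue of the classical step), and you obtain the matching lower bound from the geometric cup--cap construction by coloring each triple red if it is a cup and blue if it is a cap. No gaps; this is the intended argument.
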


For a larger (but fixed) number of colors, we prove the following theorem which provides lower and upper bounds that are tight apart from a logarithmic factor in the exponent.

\begin{theorem}
\label{main1}
For any integers $q \ge 2$ and $n \ge q+2$, we have
$$2^{(n/q)^{q-1}} \leq N_3(q,n) \leq 2^{n^{q-1}\log n}.$$
\end{theorem}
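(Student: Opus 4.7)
The plan is to prove the two inequalities separately by quite different arguments: the upper bound via a $q$-color cup--cap-type argument, the lower bound via an Erd\H{o}s--Hajnal stepping-up construction on top of the extremal graph coloring in (\ref{dil}).

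For the upper bound, fix any $q$-coloring $\chi$ of the triples of $[N]$, where $N \ge (n-1)^{(n-1)^{q-1}}+1$, and for each pair $i<j$ and color $c$, let $f_c(i,j)$ denote the length of the longest color-$c$ monotone path ending with vertices $i,j$. If some $f_c(i,j)\ge n$ we are done, so assume $f_c(i,j)\in\{2,\ldots,n-1\}$ throughout. The key monotonicity is: whenever $i<j<k$ and $\chi(i,j,k)=c$, appending $k$ to any color-$c$ path ending at $(i,j)$ yields $f_c(j,k)\ge f_c(i,j)+1$. The clever step is to \emph{discard one color} and treat
\[
\phi(i,j)=\bigl(f_1(i,j),\ldots,f_{q-1}(i,j)\bigr)\in\{2,\ldots,n-1\}^{q-1}
\]
as an edge-coloring of $K_N$ with at most $(n-1)^{q-1}$ colors. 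By (\ref{dil}), $N\ge N_2((n-1)^{q-1},n)=(n-1)^{(n-1)^{q-1}}+1$ guarantees vertices $v_1<\cdots<v_n$ on which $\phi$ is constant along consecutive pairs. If any triple $(v_i,v_{i+1},v_{i+2})$ had color $c\le q-1$, the monotonicity would strictly increase the $c$-th coordinate of $\phi$, contradicting constancy. Hence every such triple has color $q$, and $v_1<\cdots<v_n$ is a color-$q$ monotone path of length $n$. Since $(n-1)^{(n-1)^{q-1}}\le 2^{n^{q-1}\log n}$ for $n\ge q+2$, this establishes the upper bound.

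For the lower bound, I would construct an explicit $q$-coloring of the triples of $[2^M]$ with $M\approx (n/q)^{q-1}$ that admits no monochromatic monotone path of length $n$, by stepping up from an extremal graph coloring. Take a $(q-1)$-coloring $\chi_2$ of the pairs of $[M]$ with no monochromatic monotone path of length $\lfloor n/q\rfloor+1$, which exists by the tightness of (\ref{dil}). Let the vertex set be $\{0,1\}^M$ in lexicographic order; for lex-ordered $u<v<w$ with $\delta(u,v)$ the smallest coordinate of disagreement, put $\chi_3(u,v,w)=\chi_2(\delta(u,v),\delta(v,w))$ when $\delta(u,v)<\delta(v,w)$, and $\chi_3(u,v,w)=q$ when $\delta(u,v)>\delta(v,w)$. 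A monochromatic color-$c\le q-1$ path $v_1<\cdots<v_L$ in $\chi_3$ produces a strictly increasing $\delta$-sequence whose consecutive pairs are all $\chi_2$-color $c$, hence a $\chi_2$-monochromatic monotone path of length $L-1\le \lfloor n/q\rfloor$, so $L\le n/q+1$.

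The main obstacle is the color-$q$ case, in which the $\delta$-sequence is only forced to be strictly decreasing in $[M]$, giving a priori only $L\le M+1$. Handling this case requires iterating the stepping-up (or enriching the coloring) so that the ``decreasing-$\delta$'' branch is itself controlled by a nested copy of the pair coloring, and each stage of the iteration contributes a factor of $n/q$. Arranging the bookkeeping so that the exponent lands exactly at $(n/q)^{q-1}$---one factor of $n/q$ for each of the $q-1$ coordinates of the base coloring---is the technically delicate step of the lower bound.
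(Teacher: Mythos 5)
Your upper bound is correct and is essentially the paper's proof: discard one color, encode the longest monochromatic path lengths ending at a pair in the remaining $q-1$ colors, and invoke $N_2(q',n)=(n-1)^{q'}+1$ to find a pair-coloring-monochromatic path, whose triples must then all be color $q$. (The paper takes the bookkeeping one step tighter, using $(n-2)^{q-1}$ colors rather than $(n-1)^{q-1}$, and phrases it as a general recursion $N_k(q,n)\le N_{k-1}((n-k+1)^{q-1},n)$, but the idea is identical.)

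Your lower bound has a genuine gap, and you have correctly located it: with the stepping-up coloring $\chi_3(u,v,w)=\chi_2(\delta(u,v),\delta(v,w))$ when $\delta(u,v)<\delta(v,w)$ and $\chi_3=q$ otherwise, a color-$q$ path corresponds to a strictly decreasing $\delta$-sequence in $[M]$, and nothing about $\chi_2$ prevents that sequence from having length $M$. Your proposed remedy, iterating the stepping-up so the decreasing branch is governed by a nested pair coloring, is not what is needed and does not obviously terminate with the right exponent. The missing idea is to change the base object: rather than stepping up from an ordered graph coloring $\chi_2$ (which only colors increasing pairs, forcing you to lump all decreasing $\delta$-steps into one color), step up from a coloring $\phi$ of the \emph{complete digraph} on $[M]$. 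Then for $a<b<c$ one simply sets $\chi(a,b,c)=\phi(\delta(a,b),\delta(b,c))$ regardless of which of $\delta(a,b),\delta(b,c)$ is larger, and a monochromatic monotone path of length $L$ in $\chi$ yields a monochromatic directed \emph{walk} of length $L-1$ in $\phi$. So what you need is a $q$-coloring of the complete digraph on $[M]$ with no long monochromatic walk, i.e.\ a lower bound on the digraph walk Ramsey number $f(q,\cdot)$ of Section~\ref{section3}. The paper produces this in Lemma~\ref{lowf}: vertices are points of $[n']^{q-1}$, the color of a directed edge $(a,b)$ is the smallest coordinate $i$ with $a_i<b_i$ (color $q$ if no such coordinate exists), so each of colors $1,\ldots,q-1$ is bounded because a coordinate strictly increases along the walk, and color $q$ is bounded because the coordinate sum strictly decreases. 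This single stepping-up step over the right base coloring gives $N_3(q,n)>2^{f(q,n-1)-1}\ge 2^{(n/q)^{q-1}}$, with no iteration; your ``decreasing-$\delta$'' branch is tamed not by recursion but by redesigning the digraph coloring so that the ``no coordinate increases'' class is itself short.
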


Define the tower function $t_i(x,n)$ recursively as follows. Let $t_1(x,n)=x$ and $t_{i+1}(x,n)=n^{t_i(x,n)}$, so $t_i(x,n)$ is a tower of $i-1$ $n$'s with an $x$ on top. We let $t_i(x)=t_i(x,2)$.  Using a ``stepping-up" approach, developed by Erd\H os and Hajnal (see~\cite{graham}) and strengthened in~\cite{conlon}, we show the following extension of Theorem \ref{main1} to $k$-uniform hypergraphs, for any $k\geq 3$.

\begin{theorem}\label{main2}
For every $k \geq 3$ and $q$, there are positive constants $c,c'$ depending only on $k$ and $q$ such that, for any $n \geq 4k$, we have
$$t_{k-1}(cn^{q-1}) \leq N_k(q,n) \leq t_{k-1}(c'n^{q-1}\log n).$$
\end{theorem}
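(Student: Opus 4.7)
The plan is to prove both bounds by induction on $k$, using Theorem~\ref{main1} as the base case $k=3$, and in each direction deriving a reduction that adds one more level to the tower.

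For the lower bound I will carry out an Erd\H os--Hajnal stepping-up construction to show roughly $N_k(q,n) \geq 2^{N_{k-1}(q,n-1) - O(1)}$. Starting from a $q$-coloring $\chi$ of $(k-1)$-tuples of $[M]$ with no monochromatic monotone path of length $n-1$, I build a $q$-coloring $\chi'$ of $k$-tuples of $[2^M]$ with no monochromatic monotone path of length $n$. Identify $[2^M]$ with $\{0,1\}^M$ in lexicographic order; for a $k$-tuple $s_1 < s_2 < \cdots < s_k$, let $d_i \in [M]$ be the first bit position at which $s_i$ and $s_{i+1}$ disagree. When $(d_1,\ldots,d_{k-1})$ is strictly monotone I set $\chi'(s_1,\ldots,s_k) = \chi(\{d_1,\ldots,d_{k-1}\})$, and when the $d$-pattern is non-monotone I will assign a fixed color chosen according to the direction of the first non-monotonicity. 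The key structural fact of the lex order, that the first difference between any $s_i$ and $s_j$ equals $\min(d_i,\ldots,d_{j-1})$, should let me show that any monochromatic monotone path $s_1 < \cdots < s_n$ in $\chi'$ forces the sequence $d_1,\ldots,d_{n-1}$ to be monotone, and hence projects to a monochromatic monotone path of length $n-1$ in $\chi$, contradicting the hypothesis. Iterating this from the base case $N_3(q,n) \geq 2^{(n/q)^{q-1}}$ yields a tower of height $k-1$ with top $c n^{q-1}$, as required.

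For the upper bound I will establish a dual reduction, of the form $N_k(q,n) \leq 2^{O(N_{k-1}(q,n))}$ (up to logarithmic corrections), starting from a signature argument on $(k-1)$-tuples. For each $(k-1)$-tuple $T$ and each color $i$, let $\ell_i(T)$ denote the length of the longest monochromatic monotone path of color $i$ whose terminal $(k-1)$-tuple is $T$. If no monochromatic monotone path of length $n$ exists, every signature $(\ell_1(T),\ldots,\ell_q(T))$ lies in $\{k-1,\ldots,n-1\}^q$. The crucial observation is that whenever two $(k-1)$-tuples $T_1,T_2$ share $k-2$ vertices and together form a $k$-tuple of color $i$, one must have $\ell_i(T_1) < \ell_i(T_2)$; in other words, the signatures induce an auxiliary coloring of $(k-1)$-tuples that admits no extension into a length-$k$ signature-monochromatic path. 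Combining this obstruction with the inductive hypothesis on $(k-1)$-uniform hypergraphs, while carefully tracking how the polynomial loss from the signature space gets absorbed into one further level of exponentiation, should give the claimed bound $t_{k-1}(c' n^{q-1}\log n)$.

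The hardest step I anticipate is the stepping-up construction for the lower bound, specifically the treatment of non-monotone $d$-patterns. Unlike the original Erd\H os--Hajnal stepping-up for clique Ramsey numbers, here the forbidden structure is a tight monotone path, which is more fragile than a clique, so the fixed color assigned to mixed patterns must be chosen to block any tight path that tries to weave through non-monotonicities. For the upper bound, the subtle point is calibrating the reduction so the only overhead beyond the $(k-1)$-uniform bound is a factor of $\log n$ in the top exponent; I expect this to follow from a careful pigeonhole refinement on the signature space, but it is where the constants $c$ and $c'$ are actually determined.
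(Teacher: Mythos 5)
Your high-level plan is right in spirit — stepping-up for the lower bound and a signature argument for the upper bound — and both of these are what the paper does (via its Theorems \ref{upineq}, \ref{stepupto3}, and \ref{stepup4}). But each direction has a genuine gap.

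\textbf{Upper bound.} You use all $q$ signature coordinates $(\ell_1(T),\ldots,\ell_q(T))$ and conclude that any two consecutive $(k-1)$-tuples must receive different signatures, i.e.\ the signature coloring has no monochromatic monotone path of length $k$. That gives $N_k(q,n) \leq N_{k-1}\bigl((n-k+1)^q, k\bigr)$, which for $k=3$ reads $N_3(q,n) \leq N_2\bigl((n-2)^q, 3\bigr) = 2^{(n-2)^q}+1$. This is a tower of the right height, but the top is $\Theta(n^q)$, not $\Theta(n^{q-1}\log n)$, so it does not prove the stated inequality. The paper's crucial refinement is to drop the last coordinate and use the $(q-1)$-dimensional signature $(\ell_1,\ldots,\ell_{q-1})$: a $\phi$-monochromatic monotone $(k-1)$-uniform path of \emph{length $n$} must then be $\chi$-monochromatic in the one remaining color $q$, giving $N_k(q,n) \leq N_{k-1}\bigl((n-k+1)^{q-1}, n\bigr)$. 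The shift from a $q$-dimensional/length-$k$ obstruction to a $(q-1)$-dimensional/length-$n$ argument is exactly where the exponent $q-1$ (hence the $\log n$ rather than $n$ factor) comes from.

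\textbf{Lower bound.} The stepping-up plan and the fact that $\delta(s_i,s_j)$ is an extreme of $\delta_i,\ldots,\delta_{j-1}$ are both correct, and starting the tower from the $k=3$ base case of Theorem~\ref{main1} (rather than from $k=2$) is the right move, since stepping up from graphs only yields monochromatic \emph{walks}, not monotone paths. However, coloring nonmonotone $\delta$-patterns merely by ``the direction of the first non-monotonicity'' (i.e.\ whether the first local extremum is a max or a min) does not force the $\delta$-sequence of a monochromatic path to be monotone. Two consecutive hyperedges of the path can both have the \emph{same type} of first local extremum, namely when that extremum is interior to both edges with its relative position shifting by one; your rule would then give them the same color and no contradiction. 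The paper's coloring depends on both the type \emph{and the parity} of the position of the first local extremum (color $1$ iff it is an even-position max or an odd-position min, else color $2$), and it is precisely the parity flip under a shift by one that drives the case analysis (Cases~1 and~2(b) of the proof of Theorem~\ref{stepup4}). Without the parity component, the stepping-up does not yield the required contradiction.
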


In Section~\ref{Section2}, we prove the recursive upper bound $N_k(q,n) \leq N_{k-1}((n-k+1)^{q-1},n)$. This upper bound, together with equation (\ref{dil}), implies that $N_k(q,n) \leq t_k(q-1,n)$ for $k \geq 3$. The upper bounds in Theorems \ref{main1} and \ref{main2} follow from this inequality.

In Sections~\ref{section3} and~\ref{Section4}, we establish the lower bound in Theorem \ref{main1}. We also prove a general statement there (Theorem \ref{stepup4}), providing a lower bound on $N_k(q,n)$, which is exponential in $N_{k-1}(q,n)$, for every $k\ge 4$. Putting these results together, the lower bound in Theorem \ref{main2} readily follows.

The case $k=3$ is crucial to understanding the growth of $N_k(q,n)$. Indeed, the stepping-up lower bound mentioned above (see Theorem \ref{stepup4}) together with the recursive upper bound in Theorem \ref{alternativeupperbound} and inequality \ref{trivial123}, described in the next subsection, show that closing the gap between the upper and lower bounds on $N_k(q,n)$ in the special case $k=3$ would also close the gap for all larger values of $k$.

\subsection{Online and size Ramsey numbers}\label{online}

Consider the following game played by two players, {\em builder} and {\em painter}. For $t \geq 1$, at the beginning of stage $t$, a new vertex $v_{t}$ is added (the vertices $v_i$ for $1 \leq i < t$ already are present), and for each $(k-1)$-tuple of vertices $(v_{i_1},\ldots,v_{i_{k-1}})$ with $1 \leq i_1<\ldots<i_{k-1}<t$, builder decides (in any order) whether to draw the edge $(v_{i_1},\ldots,v_{i_{k-1}},v_{t})$. If builder draws the edge, then painter has to immediately
color it in one of $q$ colors $1,\ldots,q$. The {\it (vertex) online Ramsey number} $V_k(q,n)$ is the minimum number of edges builder has to draw to guarantee a monochromatic monotone path of length $n$.

An ordered $k$-uniform hypergraph $G$ is said to be {\it $(q,n)$-path Ramsey} if for every $q$-edge-coloring of $G$ one can find a monochromatic monotone path of length $n$.
The {\it size Ramsey number} $S_k(q,n)$ is the minimum number of edges of an ordered $k$-uniform hypergraph $G$ which is $(q,n)$-path Ramsey. It follows from the definitions that

\begin{equation}\label{trivial123}
N_k(q,n-k+1) \leq V_k(q,n) \leq S_k(q,n) \leq {N_k(q,n) \choose k}.\end{equation}
Indeed, the first inequality comes from the following painter strategy. Let $w_i$ be the $i$th vertex that builder adds in which there is an edge whose largest vertex is $w_i$ (there may be vertices not given labels). That is, $w_i=v_{j(i)}$ where $j(i)$ is the $i$th stage for which at least one edge is added. Painter colors each edge $(w_{i_1},w_{i_2},\ldots,w_{i_k})$ with $i_1<\ldots<i_k < N_k(q,n-k+1)$ the same color as the color of $(i_1,\ldots,i_k)$ in a $q$-edge-coloring of the ordered complete $k$-uniform hypergraph on $N_k(q,n-k+1)-1$ vertices with no monochromatic path of length $n-k+1$. All other edges, i.e., those containing a vertex which is not the largest vertex in some edge, can be colored arbitrarily. Note that every  monotone path of length $n$ contains a path of length $n-k+1$ such that each vertex is the largest vertex in an edge of the ordered hypergraph. Therefore, this coloring has no monochromatic monotone path of length $n$, and painter guarantees that there are at least $N_k(q,n-k+1)$ edges when the first monochromatic monotone path of length $n$ appears.  On the other hand, if builder selects all edges of a $(q,n)$-path Ramsey ordered $k$-uniform hypergraph, she will definitely win; which proves the second inequality. The third inequality is trivial, taking into account that the ordered complete $k$-uniform hypergraph $K^k_N$ with $N=N_k(q,n)$ is $(q,n)$-path Ramsey.

Conlon, Fox, and Sudakov \cite{CFS10} used online Ramsey numbers to give an upper bound for the Ramsey number of a {\em complete} $k$-uniform hypergraph with $n$ vertices. Here we apply this technique to establish an upper bound for the Ramsey number of a {\em monotone path} of length $n$. More precisely, we show

\begin{theorem}\label{alternativeupperbound}
For every $k\ge 3$ and for every $q\ge 1,\; n\ge k$, we have
$$N_k(q,n) \leq q^{V_{k-1}(q,n+k-2)} + k-2.$$
\end{theorem}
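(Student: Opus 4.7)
The plan is to adapt the online-Ramsey technique of Conlon, Fox, and Sudakov cited above from cliques to monotone paths. Set $V := V_{k-1}(q, n+k-2)$ and $N := q^V + k - 2$, and let $\chi$ be an arbitrary $q$-coloring of the $k$-subsets of $[N]$; the goal is to locate a monochromatic monotone path of length $n$ under $\chi$.

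I would simulate the online builder-painter game for $(k-1)$-uniform hypergraphs whose target is a monochromatic tight path of length $n+k-2$; by the definition of $V$, an optimal builder strategy $\sigma$ forces such a path while drawing at most $V$ edges against every painter. We embed each abstract game vertex $v_t$ into a vertex $u_t \in [N]$ and simulate a painter derived from $\chi$, while maintaining a subset $A \subseteq [N]$ under the following invariant: for every $(k-1)$-edge $e$ of the simulated game that has already been colored $c_e$, we have $\chi(e \cup \{x\}) = c_e$ for every $x \in A$ with $x > \max e$. Because a $(k-1)$-edge needs $k-1$ vertices, the first $k-2$ stages of the online game draw no edges, so we set $u_t := t$ for $t = 1, \ldots, k-2$ and initialize $A := \{k-1, k, \ldots, N\}$, which has size $q^V$. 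In each later stage $t \geq k-1$, we first put $u_t := \min A$ and remove $u_t$ from $A$; then, for each edge $e$ drawn by $\sigma$ in this stage (necessarily having top vertex $u_t$), we partition the remaining $A$ by the value of $\chi(e \cup \{x\})$, retain the largest class as the new $A$, and assign its common color $c_e$ to $e$.

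Once $\sigma$ forces a monochromatic $(k-1)$-uniform tight path of length $n+k-2$ on vertices $u_{j_1} < u_{j_2} < \cdots < u_{j_{n+k-2}}$ with color $c$, I would extract the promised monochromatic $k$-uniform path as follows. For every index $i$ with $1 \leq i \leq n-1$, the $(k-1)$-tuple $\{u_{j_i}, \ldots, u_{j_{i+k-2}}\}$ is an edge of the simulated path and is therefore colored $c$. Since $u_{j_{i+k-1}}$ was picked at the later stage $j_{i+k-1}$ out of a subset of the $A$ that was in force just after the above edge was colored, the invariant yields $\chi(\{u_{j_i}, \ldots, u_{j_{i+k-1}}\}) = c$. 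These $n-1$ consecutive $k$-tuples form a monochromatic $k$-uniform tight path of length $n+k-2$ on $\{u_{j_1}, \ldots, u_{j_{n+k-2}}\}$, whose initial segment on the first $n$ of those vertices is the required monochromatic monotone path of length $n$.

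The only thing left to verify is that $|A|$ never drops to zero throughout the simulation, so that each $\min A$ selection and each pigeonhole step is legal. We may assume each stage $t \geq k-1$ draws at least one edge (otherwise $\sigma$ is wastefully picking a vertex it never uses); writing $m_t$ for the number of edges in stage $t$, we then have $\sum_{t \geq k-1} m_t \leq V$. Starting from $|A_{k-2}| = q^V$, a short induction on the recurrence $|A_{t+1}| \geq \lceil (|A_t| - 1)/q^{m_{t+1}} \rceil$ shows that $|A_t| \geq q^{V - (m_{k-1} + \cdots + m_t)} \geq 1$ throughout, and this is where the exact value $N = q^V + k - 2$ is used. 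This bookkeeping step, while elementary, is the main technical point of the argument.
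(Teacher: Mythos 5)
Your approach is the same in spirit as the paper's: simulate an online builder--painter game against a painter derived from $\chi$, maintaining a shrinking ``clean'' set $A$ by pigeonhole so that a monochromatic $(k-1)$-uniform tight path in the simulated game lifts to a monochromatic $k$-uniform tight path in $\chi$. The invariant, the pigeonhole update, the starting size $|A_{k-2}| = q^V$, and the lifting step are all correct and match the paper's argument.

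However, there is a genuine gap in the bookkeeping, concentrated in the sentence ``We may assume each stage $t \geq k-1$ draws at least one edge (otherwise $\sigma$ is wastefully picking a vertex it never uses).'' This assumption is not a harmless normalization: your recurrence $|A_{t+1}| \geq \lceil (|A_t|-1)/q^{m_{t+1}} \rceil$ only yields $|A_t| \geq q^{V - (m_{k-1}+\cdots+m_t)}$ under the hypothesis $m_i \geq 1$ for all $i \geq k-1$ (when $m_{t+1}=0$ the bound drops by $1$ and the induction breaks), so the claim that $|A|$ never vanishes rests entirely on it. But in the unmodified online game, vertices arrive automatically whether or not builder uses them; an optimal strategy for the target $n+k-2$ may very well let a stage pass with no new edge, since that vertex can still appear later as a non-largest vertex of an edge, and can legitimately occupy one of the first $k-2$ slots of the winning path. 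You cannot simply excise such stages without re-indexing the game, and re-indexing shortens the path you are guaranteed by up to $k-2$. In other words, what you actually need is the modified online game where each stage $\geq k-1$ is forced to contain an edge, and the relevant quantity is $V'_{k-1}(q,n+k-2)$, which a priori could be strictly larger than $V = V_{k-1}(q,n+k-2)$.

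The paper resolves exactly this point by introducing the modified online Ramsey number $V'_{k-1}(q,n)$ and proving the inequality $V'_{k-1}(q,n) \leq V_{k-1}(q,n+k-2)$ (the proof uses your same observation --- vertices never serving as largest vertex of an edge can only be among the first $k-2$ vertices of a monotone path), and then simulating the \emph{modified} game with the \emph{smaller} target $n$, which costs at most $V$ edges. The fix for your write-up is to do the same: replace the unmodified game with target $n+k-2$ by the modified game with target $n$; then each stage $t \geq k-1$ draws an edge by fiat, the bookkeeping goes through, the simulation produces a monochromatic $(k-1)$-uniform tight path of length $n$, and your lifting argument (applied directly, without the ``initial segment'' step) gives the required monochromatic $k$-uniform tight path of length $n$.
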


In the special case $k=3$, this bound is worse than the upper bound in Theorem \ref{main1}. However, Theorem \ref{alternativeupperbound} is already useful for $k \geq 4$. Indeed, any improvement on the upper bound in Theorem \ref{main2} for $k=3$, together with the trivial upper bound on $V_{k-1}(q,n+k-2)$ in (\ref{trivial123}), will lead to analogous improvements for all larger $k$. For example, if, for fixed $q$, we would know that $N_3(q,n)=2^{O(n^{q-1})}$, then using Theorem \ref{alternativeupperbound} we would also know that $N_4(q,n) =2^{2^{O(n^{q-1})}}$. Note that the recursive bound in Theorem \ref{upineq} does not give such an improvement even if we knew that $N_3(q,n)$ is exponential in $n^{q-1}$. Similarly, the stepping-up lower bound in  Theorem \ref{stepup4} shows that any improvement on the lower bound in Theorem \ref{main2} for $k=3$ will lead to analogous improvements for all larger $k$.

In addition to being useful for bounding classical Ramsey numbers as discussed above, online and size Ramsey numbers are now well-studied topics in their own right. The {\it size Ramsey number} $r_e(H)$ of a graph $H$ is the minimum number of edges of a graph $G$ in which every $2$-edge-coloring of $G$ contains a monochromatic copy of $H$. The study of these numbers was initiated by Erd\H{o}s et al.~\cite{EFRS}. A fundamental problem of Erd\H{o}s in this area is to determine the growth of the size Ramsey number of paths.  Beck \cite{Be} solved this problem, proving that these numbers grow linearly. That is, $r_e(P_n) \leq c n$, where $c$ is an absolute constant. In contrast, R\"odl and Szemer\'edi \cite{RS} disproved a conjecture of Beck by showing that there are graphs of maximum degree $3$ whose size Ramsey number grows superlinear in the number of vertices. In the other direction, Kohayakawa et al.~\cite{KRSS} recently showed that graphs on $n$ vertices with fixed maximum degree $\Delta$ have size Ramsey number at most $O(n^{2-1/\Delta}\log^{1/\Delta} n)$.

Another online Ramsey game which is quite close to ours was introduced independently by Beck \cite{Be2} and Kurek and Ruci\'nski \cite{KR}. In this game, there are two players, Builder and Painter, who move on the originally empty graph with an unbounded number of vertices. At each step, Builder draws a new edge and Painter has to color it either red or blue immediately. The {\it edge online Ramsey number} $\bar r(H)$ is the minimum number of edges that builder has to draw in order to force painter to create a monochromatic $H$. As Builder can simply draw the edges of a given graph, we have the inequality $\bar r(H) \leq r_e(H)$. A basic conjecture in this area due to R\"odl is to show that $\lim_{n \to \infty} \frac{\bar r(K_n)}{r_e(K_n)}=0$. Conlon \cite{Co} made substantial progress on this conjecture, showing that $\bar r(K_n) \leq c^{n}r_e(K_n)$ holds infinitely often, where $c<1$ is an absolute constant. Randomized variants of the edge online Ramsey number were studied in \cite{BB}, \cite{BMS}, \cite{FKRRT}.

Note that the problem of estimating $V_k(q,n)$ and $S_k(q,n)$ is most interesting in the case of graphs ($k=2$). Indeed, for larger $k$, $N_k(q,n)$ grows roughly $(k-2)$-fold exponential in $n^{q-1}$, so the lower and upper bounds on these numbers are roughly determined by $N_k(q,n)$. We will therefore focus our attention on the interesting case $k=2$.

In view of (\ref{trivial123}), one may think that the functions $V_k(q,n)$ and $S_k(q,n)$ cannot differ too much. Rather surprisingly, already for graphs ($k=2$) this is not the case. In Section~\ref{sectionsizeandvertex}, we show that the size Ramsey number satisfies $S_2(q,n) \geq c_qn^{2q-1}$. On the other hand, also in that section we prove

\begin{theorem}\label{keyonlinethm}
We have $$V_2(2,n)=(1+o(1))n^2\log_2 n,$$
and for every fixed $q \geq 2$, there are constants $c_q$ and $c_{q}'$ such that
$$c_q n^q \log n \leq V_2(q,n)\leq c_q' n^q \log n.$$
\end{theorem}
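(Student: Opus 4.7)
Throughout, label each vertex $v$ with $L(v)=(\ell_1(v),\dots,\ell_q(v))$, where $\ell_c(v)$ denotes the length of the longest monotone monochromatic path of color $c$ ending at $v$ in the currently drawn graph. Whenever an edge $uv$ with $u<v$ is drawn and coloured $c$, we have $\ell_c(v)\ge\ell_c(u)+1$; consequently Painter loses precisely when some $\ell_c(v)$ reaches $n$, and Painter survives as long as every label lies in $[n-1]^q$.

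For the upper bound $V_2(q,n)\le c_q' n^q\log n$, I plan to describe a Builder strategy based on binary search. Builder maintains at most one ``anchor'' vertex per label in $[n-1]^q$. Whenever a new vertex $v$ appears, Builder performs, for each color $c$, a binary search over the anchors using $O(\log n)$ edges per color. Either Painter is forced to raise some $\ell_c(v)$ up to $n$---in which case Builder wins immediately---or $v$ receives a label in $[n-1]^q$. After at most $(n-1)^q$ vertices have been placed without a win, two of them share a label by pigeonhole, and one further edge between them finishes the game. This gives $((n-1)^q+1)\cdot O(q\log n)=O(n^q\log n)$ edges in total. The sharp constant $(1+o(1))n^2\log_2 n$ in the case $q=2$ comes from a careful amortization of the binary search, showing that Builder need use only $(1+o(1))\log_2 n$ edges per vertex on average rather than the naive $2\log_2 n$.

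For the lower bound $V_2(q,n)\ge c_q n^q\log n$, I plan to equip Painter with a potential function of the form $\Phi=\sum_v W(L(v))$, where the weight $W$ measures how far $v$ is from realising a winning label (for instance, $W(L)=\prod_c\binom{n-1}{\ell_c-1}$ or an exponential variant). Painter colours each edge so as to minimise the induced increase in $\Phi$; a convexity/averaging argument over the $q$ colours then shows that each edge can increase $\Phi$ by at most a bounded multiplicative factor on average. Since forcing Painter into a losing state requires $\Phi$ to have grown by a factor exponential in $n$, this yields the bound $\Omega(n^q\log n)$. The matching constant $c_q=1+o(1)$ for $q=2$ follows from a tight choice of $W$ and ultimately mirrors Builder's binary-search strategy from the opposite direction.

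The main technical obstacle will be obtaining the sharp leading constant $1+o(1)$ when $q=2$: both Builder's binary search and Painter's potential strategy must be optimal to first order. The heart of the matching argument is a bookkeeping claim that each of Builder's edges can account for at most $(1+o(1))/\log_2 n$ of the progress needed to force a win, so that the upper and lower bound constants exactly meet at $n^2\log_2 n$.
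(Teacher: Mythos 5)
Your labeling framework, tracking for each vertex $v$ the vector $L(v)=(\ell_1(v),\dots,\ell_q(v))$ of longest monochromatic path lengths per color, is exactly the paper's ``$(q,n)$-lattice game'' reformulation (Lemma~\ref{equivlattice}), so the setup is aligned. However, both your bounds have gaps.

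\textbf{Upper bound.} Your plan of ``a binary search over the anchors, for each color $c$, $O(\log n)$ edges per color'' is not how the paper proceeds, and naively it costs $q\log_2 n$ edges per vertex, giving $2\log_2 n$ for $q=2$ -- a factor of $2$ off the sharp constant. The paper (Lemma~\ref{uplattice}) instead maintains the set $M_0$ of \emph{maximal} already-seen labels, which is an antichain and so has size at most $(n-1)^{q-1}$, not $(n-1)^q$. Builder repeatedly compares the new vertex against a ``median-position'' point of $M_0$ (first part of Lemma~\ref{median}); whichever color Painter chooses, the set of unresolved anchors shrinks by a factor $(1-1/q)$. For $q=2$ this yields $1+\log_2(n-1)$ edges per vertex in a single search, with no amortization needed, and the antichain observation is precisely what eliminates the factor of $2$. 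Your ``careful amortization'' remark points at the right gap but does not supply the idea that closes it.

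\textbf{Lower bound.} Your potential-function sketch has an order-of-magnitude problem. If $\Phi$ must grow by a factor $2^{\Theta(n)}$ and each edge increases $\Phi$ by a \emph{bounded} multiplicative factor, you only get $\Omega(n)$ edges, not $\Omega(n^q\log n)$. To recover $n^q\log n$ you would need the per-edge factor to be $1+O(1/(n^{q-1}\log n))$, which is not ``bounded,'' and is hard to argue: a single edge $uv$ colored $c$ can jump $\ell_c(v)$ to $\ell_c(u)+1$, which may be close to $n$, so $W(L(v))$ can blow up in one step. The paper's Lemma~\ref{lowlattice} avoids this entirely: coordinator (i.e.\ Painter, via Lemma~\ref{equivlattice}) fills the lattice level by level, maintaining a candidate set $S_h$ that shrinks by at most a factor $q$ per Builder step (second part of Lemma~\ref{median}); if $|S_h|>1$ when Builder ends the stage, coordinator can assign an already-used point, wasting Builder's effort. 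Summing the per-point cost $\log_q m$ over the $m$-th point in each level gives the $(q-1-o(1))n^q\log_q n$ lower bound. A correct potential argument would have to build this level-by-level control in, at which point it essentially reproduces the paper's strategy rather than offering an alternative.
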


Moreover, the proof of Theorem \ref{keyonlinethm} shows that builder has a winning strategy which uses $N_2(q,n)$ vertices and each vertex $v_t$ belongs to at most $c_q'\log n$ edges $(v_j,v_t)$ with $j<t$. This theorem shows that, by using the information on the initial portion of the coloring, builder can substantially reduce the number of edges that guarantee a monochromatic monotone path.

\subsection{Noncrossing convex bodies in convex position}\label{noncrossing}

In the last section, we apply Theorem \ref{main1} and Lemma \ref{clique} to a problem for families of convex bodies. To formulate our question, we need some definitions. A family $\mathcal{C}$ of convex bodies (compact convex sets with nonempty interior) in the plane is said to be in \emph{convex position} if none of its members is contained in the convex hull of the union of the others. We say that $\mathcal{C}$ is in \emph{general position} if
\begin{enumerate}
\item every three members of $\mathcal{C}$ are in convex position;
\item no two members $C, C'\in\mathcal{C}$ have a common tangent that meets $C\cap C'$; and
\item no three members of $\mathcal{C}$ share a common tangent.
\end{enumerate}

Bisztriczky and Fejes-T\'oth~\cite{bf1} generalized the Happy Ending theorem of Erd\H os and Szekeres~\cite{e}, \cite{bf2} as follows. They proved that for every $n$, there exists a function $D(n)$ such that any family of $D(n)$ pairwise disjoint convex bodies in general position in the plane contains $n$ members in convex position. It was proved in \cite{pt} that $D(n) \leq \left({2n-4\choose n-2} + 1\right)^2$.

\smallskip

In \cite{pt2}, the Bisztriczky-Fejes T\'oth theorem has been extended to families of \emph{noncrossing} convex bodies, that is, to convex bodies, no pair of which share more than {\em two} boundary points. It was proved that there exists a function $N=M(n)$ such that from every family of $N$ noncrossing convex bodies in general position in the plane one can select $n$ members in convex position. More recently, Hubard et al.~\cite{alfredo} showed that the function $M(n)$ grows at most double exponentially in $n$.  We use Theorem \ref{main1} to obtain a much better bound.

\begin{theorem}
\label{convex}
Let $N = M(n)$ denote the smallest integer such that every family of $N$ noncrossing convex bodies in general position in the plane has $n$ members in convex position. Then we have $$M(n) \leq N_3(3,n) \leq 2^{n^2\log n}.$$
\end{theorem}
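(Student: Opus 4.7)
The plan is to split the inequality and observe that the upper bound $N_3(3,n)\le 2^{n^2\log n}$ is immediate from Theorem~\ref{main1} with $q=3$, so the substantive content is the reduction $M(n)\le N_3(3,n)$. I would proceed by the standard Erd\H os--Szekeres-style reduction from convex position to a Ramsey-type statement on ordered triples.

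Given a family $\mathcal{C}=\{C_1,\dots,C_N\}$ of $N=N_3(3,n)$ noncrossing convex bodies in general position, the first step is to linearly order the bodies by a geometrically natural invariant---for instance the $x$-coordinates of their leftmost points, which are distinct after a generic rotation. This identifies $\mathcal{C}$ with the vertex set $[N]$ of the ordered complete $3$-uniform hypergraph. The second step is to define a $3$-coloring on triples $\{i,j,k\}$ with $i<j<k$. A natural choice uses the common outer tangents of $C_i$ and $C_k$, which are well defined and distinct for every noncrossing pair in general position; one records whether $C_j$ lies above both tangents, below both tangents, or strictly between (or meets) the two tangents. The general-position hypothesis guarantees that each triple receives exactly one of these three colors.

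The third step is to invoke Theorem~\ref{main1} with $q=3$ and $N=N_3(3,n)$: our $3$-coloring must contain a monochromatic monotone path $C_{i_1},C_{i_2},\dots,C_{i_n}$, meaning that every consecutive triple $\{C_{i_t},C_{i_{t+1}},C_{i_{t+2}}\}$ receives the same color. The fourth and final step is to promote this local uniformity to the global statement that the subfamily $C_{i_1},\dots,C_{i_n}$ is itself in convex position. This is the role played by Lemma~\ref{clique}: analogously to the cup/cap lemma for points, it says that once every consecutive triple in a sequence of noncrossing convex bodies is of a fixed combinatorial type, no member of the sequence can lie in the convex hull of the union of the others.

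The main obstacle is this final geometric step. For pairwise disjoint convex bodies the analogous propagation argument reduces to the classical cup/cap dichotomy of Erd\H os--Szekeres, whereas in the noncrossing case one must track outer tangent configurations through possible boundary intersections, with the hypothesis of at most two common boundary points used essentially: a third intersection would allow a body to reverse its relative position along the path and thereby land inside the convex hull of the others. This is also the reason the reduction naturally requires three colors rather than the two colors sufficient for the pairwise-disjoint Bisztriczky--Fejes T\'oth theorem.
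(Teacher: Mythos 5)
Your high-level skeleton matches the paper's: order the bodies by the $x$-coordinates of their leftmost points, define a $3$-coloring of ordered triples, apply Theorem~\ref{main1} with $q=3$ to extract a monochromatic monotone path, and then promote that path to $n$ bodies in convex position. However, two crucial steps are misstated or missing.

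First, you assign to Lemma~\ref{clique} a geometric role it does not have. Lemma~\ref{clique} is purely combinatorial: it says that if a $q$-coloring of the $k$-tuples of $[N_k(q,n)]$ is \emph{transitive} --- meaning that whenever the consecutive tuples $\{i_1,\dots,i_k\}$ and $\{i_2,\dots,i_{k+1}\}$ receive a color, all $k$-subsets of $\{i_1,\dots,i_{k+1}\}$ receive that color --- then there is a monochromatic \emph{complete} subhypergraph on $n$ vertices, not merely a monochromatic monotone path. It says nothing about convex bodies or convex hulls. To use it one must (i) prove the geometric coloring is transitive, which is the bulk of Section~\ref{geomsect} (Lemmas~\ref{bothorientations} and~\ref{oneorientation}); then (ii) apply Lemma~\ref{clique} to upgrade the monochromatic path to a monochromatic clique; and then (iii) invoke a genuinely geometric statement (the paper's Lemma~\ref{order}, quoted from~\cite{alfredo}) asserting that a family in which \emph{every} triple has, say, a clockwise orientation is in convex position. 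A monochromatic monotone path alone --- same color for consecutive triples only --- does not give convex position; transitivity is exactly what lets you pass from consecutive triples to all triples. Your proposal identifies the ``final geometric step'' as the obstacle but does not identify transitivity as the property to be proved, which is where essentially all of the work lies.

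Second, the specific coloring you propose --- recording whether $C_j$ lies above both, below both, or ``between (or meets)'' the two common outer tangents of $C_i$ and $C_k$ --- differs from the paper's and is not clearly well-defined: a convex body $C_j$ can cross one or both of those tangents, so the three cases are neither mutually exclusive nor exhaustive without further care, and there is no a priori reason this coloring would be transitive. The paper instead colors $(C_i,C_j,C_k)$ by whether its \emph{strong orientation} (anchored at the leftmost point $q_i^*$ of $C_i$) is only clockwise, only counterclockwise, or both; the noncrossing and general-position hypotheses are used both to show these three cases partition all triples and to carry out the transitivity proof. Any alternative coloring would need a comparable analysis before Lemma~\ref{clique} can be invoked.
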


Note that no analogue of the last theorem is true if we drop the assumption that the bodies are noncrossing. One can construct a family of arbitrarily many pairwise crossing rectangles that which
are in general position, but no four of them are in convex position (see \cite{pt2}).

\medskip

Theorem \ref{convex} can be established, as follows. First, we choose a coordinate system, in which the {\em left endpoint} (i.e., the leftmost point) of every member of the family is unique, and the $x$-coordinates of the left endpoints are different. Let $C_1, C_2,\ldots, C_N$ denote the members of our family, listed in the increasing order of the $x$-coordinates of their left endpoints. Suppose that $N=N_3(3,n)$.

In the last section, we define a 3-coloring of all ordered triples $(C_i,C_j,C_k)$ with $i<j<k$. We will show that this coloring has the property that if a sequence of sets $C_{i_1}, C_{i_2},\ldots \;$ $(i_1<i_2<\cdots)$ induces a monochromatic monotone path, then they are in convex position.

According to Theorem \ref{main1}, $N \leq n^{n^2}.$  Therefore, our colored triple system contains a monochromatic monotone path of length $n$. In other words, there exists a sequence $i_1<i_2<\cdots<i_n$ such that all ordered triples induced by the sets $C_{i_j}$ are of the same color. It follows from the special property of our coloring mentioned above that the sets $C_{i_1},\ldots, C_{i_n}$ are in convex position. This will complete the proof of Theorem \ref{convex}.

\vspace{0.1cm}
\noindent {\bf Organization:} The rest of the paper is organized as follows. In the next section we prove a recursive upper bound on $N_k(q,n)$. In Section \ref{section3}, we prove results on monochromatic walks in digraphs and the minimum chromatic number of a $(q,n)$-path Ramsey graph which will be used in the subsequent two sections. In Section \ref{Section4}, we prove lower bounds on $N_k(q,n)$. In Section \ref{sectionsizeandvertex}, we prove bounds on size and online Ramsey numbers of paths, as well as an alternative upper bound on $N_k(q,n)$ in terms of online Ramsey numbers. In Section \ref{transitivesect}, we prove a lemma relating monochromatic cliques in ``transitive'' colorings to monochromatic monotone paths in general colorings. We use this lemma in Section \ref{geomsect} in the proof of Theorem \ref{convex}. We finish with some concluding remarks and open problems in Section \ref{concluding}.

\section{An upper bound on $N_k(q,n)$}\label{Section2}

Given any two positive integers $i \leq j$, let $[i,j]$ denote the set of positive integer $h$ with $i \leq h \leq j$, and $[i]=[1,i]$. We begin by proving a recursive upper bound on $N_k(q,n)$.

\begin{theorem}\label{upineq}
For any integers $n\ge k\ge 2$ and $q\ge 2$, we have $$N_k(q,n) \leq N_{k-1}((n-k+1)^{q-1},n).$$
\end{theorem}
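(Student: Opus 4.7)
The plan is to mimic the classical Erd\H os--Szekeres pigeonhole argument, reducing a $q$-coloring of $k$-tuples to a coloring of $(k-1)$-tuples with roughly $(n-k+1)^{q-1}$ colors, by attaching to each $(k-1)$-tuple the vector of longest monochromatic-path lengths ending there and using one distinguished color as a ``default''.

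More concretely, set $N=N_{k-1}((n-k+1)^{q-1},n)$ and let $\chi\colon\binom{[N]}{k}\to[q]$ be any $q$-coloring. For each $(k-1)$-tuple $S=\{a_1<\cdots<a_{k-1}\}$ and each color $c\in[q]$, define $f_c(S)$ to be the largest $\ell$ such that there is a monochromatic monotone path of length $\ell$ in color $c$ whose last $k-1$ vertices are exactly $a_1,\ldots,a_{k-1}$ (in particular $f_c(S)\ge k-1$ trivially, with the tuple itself counted as a path with no edges). If $f_c(S)\ge n$ for some $c$ and some $S$, the theorem is proved, so I may assume $f_c(S)\in\{k-1,\ldots,n-1\}$ for every $c$ and $S$.

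Now define a new coloring $\phi$ of the $(k-1)$-tuples by $\phi(S)=(f_1(S),f_2(S),\ldots,f_{q-1}(S))$, which takes at most $(n-k+1)^{q-1}$ values. By the choice of $N$, there exist $v_1<v_2<\cdots<v_n$ in $[N]$ such that the $(k-1)$-tuples $S_i=\{v_i,v_{i+1},\ldots,v_{i+k-2}\}$ (for $i=1,\ldots,n-k+2$) all receive the same $\phi$-color; in particular $f_c(S_i)=f_c(S_{i+1})$ for every $c\in[q-1]$. I claim each of the $k$-tuples $T_i=\{v_i,v_{i+1},\ldots,v_{i+k-1}\}$ with $1\le i\le n-k+1$ satisfies $\chi(T_i)=q$, which immediately gives a monochromatic monotone path of length $n$ in color $q$.

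The verification of this last claim is the only non-bookkeeping step: suppose $\chi(T_i)=c$ for some $c\in[q-1]$. Any color-$c$ monotone path $u_1<\cdots<u_m$ with final $k-1$ vertices $u_{m-k+2},\ldots,u_m=v_i,v_{i+1},\ldots,v_{i+k-2}$, i.e.\ witnessing $f_c(S_i)=m$, can be extended by appending $v_{i+k-1}$: the new last edge is $\{v_i,v_{i+1},\ldots,v_{i+k-1}\}=T_i$, which has color $c$, and the new path ends at $S_{i+1}$. Hence $f_c(S_{i+1})\ge f_c(S_i)+1$, contradicting $f_c(S_i)=f_c(S_{i+1})$. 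The only place that requires care is making sure the ``path of length $k-1$ with no edges'' convention is consistent so that the base case of the extension argument (when $f_c(S_i)=k-1$) is handled uniformly, but this is a minor point rather than a genuine obstacle.
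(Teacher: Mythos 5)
Your proof is correct and follows essentially the same approach as the paper: define an auxiliary coloring of $(k-1)$-tuples by the vector of longest monochromatic-path lengths in colors $1,\ldots,q-1$, find a monochromatic monotone path under this auxiliary coloring, and use the extension argument to show its $k$-edges must all have color $q$. The paper phrases the argument as a contradiction while you argue directly, but the substance (including the $f_c(S)\ge k-1$ convention and the path-extension step) is identical.
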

\medskip
\noindent{\bf Proof.} Suppose for contradiction that there is a $q$-coloring $$\chi:{[N] \choose k} \rightarrow [q]$$ of the edges of the complete $k$-uniform ordered hypergraph  $K^k_N$ on $N=N_{k-1}((n-k+1)^{q-1},n)$ vertices with no monochromatic path of length $n$. Define the auxiliary coloring $$\phi: {[N] \choose k-1} \rightarrow [k-1,n-1]^{q-1},$$ as follows. For any $v_1<\ldots<v_{k-1}$, we let $\phi(v_1,\ldots,v_{k-1})=(n_1,\ldots,n_{q-1})$, where $n_i$ is the length of the longest monochromatic path in color $i$ ending with $v_1,\ldots,v_{k-1}$. By convention, any $k-1$ vertices $v_1,\ldots,v_{k-1}$ form a monochromatic monotone path of length $k-1$ with respect to coloring $\chi$. Thus, we have $k-1\leq n_i \leq n-1$, for every $i\; (1\le i\le q-1)$.

By our assumption, $N = N_{k-1}((n-k+1)^{q-1},n)$, so that in coloring $\phi$, there is a monochromatic monotone path of length $n$. Let $u_1<\ldots < u_n$ denote the vertices of this path. To complete the proof of the theorem, it is sufficient to show that
these vertices form a monochromatic monotone path in color $q$, with respect to the coloring $\chi$. Suppose that this is not the case. Then, for some $1 \leq j \leq n-k+1$ and $1 \leq i \leq q-1$, we have $\chi(u_{j},u_{j+1},\ldots,u_{j+k-1})=i$. Since the vertices $u_1,\ldots,u_n$ form a monochromatic monotone path with respect to the coloring $\phi$, it follows that in coloring $\chi$, the length of the longest monochromatic monotone path of color $i$ ending with $u_j,u_{j+1},\ldots,u_{j+k-2}$ is the same as the length of the longest monochromatic monotone path of color $i$ ending with $u_{j+1},u_{j+2},\ldots,u_{j+k-1}$. However, any longest monochromatic monotone path of color $i$ ending with $u_j,u_{j+1},\ldots,u_{j+k-2}$ can be extended to a longer monochromatic monotone path of color $i$, by adding the vertex $u_{j+k-1}$. This contradiction completes the proof of the theorem.
\qed

\medskip

Now we are in a position to prove the upper bound in Theorem \ref{main1}.

\medskip

\noindent {\bf Proof of the upper bounds in Theorems \ref{main1} and \ref{main2}.} We prove the stronger inequality  $$N_k(q,n) \leq t_k(q-1,n),$$ for every $k \geq 3$.
The proof is by induction on $k$. By (\ref{dil}), we have $N_2(q,n)=(n-1)^{q}+1$. According to Theorem \ref{upineq}, $$N_3(q,n) \leq N_2((n-2)^{q-1},n) = (n-1)^{(n-2)^{q-1}}+1<n^{n^{q-1}}=t_3(q-1,n).$$
Now suppose that the desired inequality holds for $k$. Then it also holds for $k+1$, because, again by Theorem \ref{upineq}, we have
$$N_{k+1}(q,n) \leq N_k((n-k)^{q-1},n) \leq t_k((n-k)^{q-1},n) \leq t_k(n^{q-1},n)=t_{k+1}(q-1,n).$$
\qed

\section{Monochromatic walks in digraphs}\label{section3}
Let $D_N$ denote the {\em complete digraph} on $N$ vertices, that is, the directed graph in which each pair of distinct vertices is connected by two edges with opposite orientations. A {\em walk of length $n$} in a digraph is a sequence of $n$ vertices $v_1,\ldots, v_n$ with possible repetitions such that for every $i\; (1\le i<n)$, the directed edge $\overrightarrow{v_iv_{i+1}}$ belongs to the digraph. If $v_1=v_n$, then the walk is called {\em closed}. Note that closed walks can be used to construct walks which are arbitrarily long. A digraph with no closed walk is {\em acyclic}.

Let $f(q,n)$ be the smallest number $N$ such that for every $q$-coloring of the edges of $D_N$, there is a monochromatic walk of length $n$. First, we show that for a fixed $q$, the order of magnitude of $f(q,n)$ is $n^{q-1}$. In the next section, this fact is used to establish the lower bound on $N_3(q,n)$, stated in Theorem~\ref{main1}.

\begin{theorem}\label{ftheorem} For any integers $n, q\ge 2$,
we have $$(n/q)^{q-1} \leq f(q,n) \leq N_2(q-1,n)=(n-1)^{q-1}+1.$$
\end{theorem}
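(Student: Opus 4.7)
The upper bound $f(q,n) \leq (n-1)^{q-1}+1$ will follow from a dichotomy on the color classes. Given a $q$-edge-coloring of $D_N$ with $N = (n-1)^{q-1}+1$, I would first check whether any color class contains a closed directed walk (in particular, a pair $u,v$ with both $\overrightarrow{uv}$ and $\overrightarrow{vu}$ in the same color counts as a length-2 closed walk); if so, repeatedly traversing it yields a monochromatic walk of length $n$. Otherwise every color class is acyclic, so in particular the subgraph of color $q$ admits a topological ordering $v_1,\dots,v_N$ in which each color-$q$ edge goes from a larger index to a smaller one. Consequently, for every $i<j$ the forward edge $\overrightarrow{v_iv_j}$ has a color in $\{1,\dots,q-1\}$, and this induces a $(q-1)$-edge-coloring of the complete graph on $[N]$. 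Since $N = N_2(q-1,n)$, equation \eqref{dil} yields a monochromatic monotone path $v_{i_1}, v_{i_2},\dots, v_{i_n}$ of length $n$, which is exactly a monochromatic walk of length $n$ in the original digraph.

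For the lower bound $f(q,n) \geq (n/q)^{q-1}$, I would construct an explicit $q$-coloring of $D_N$ with $N = \lfloor n/q \rfloor^{q-1}$ avoiding monochromatic walks of length $n$. Set $m = \lfloor n/q \rfloor$ and identify the vertex set with $[m]^{q-1}$. For distinct $u,v \in [m]^{q-1}$, color the directed edge $\overrightarrow{uv}$ by the smallest index $i \in \{1,\dots,q-1\}$ with $u_i < v_i$ if such an index exists; otherwise $u$ dominates $v$ coordinate-wise, and I assign $\overrightarrow{uv}$ the color $q$. A walk in color $i \in \{1,\dots,q-1\}$ has coordinates $1,\dots,i-1$ non-increasing and the $i$-th coordinate strictly increasing at each step, so the walk visits at most $m$ vertices. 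A walk in color $q$ is coordinate-wise strictly decreasing, so the coordinate sum $\sum_j v_j$ strictly decreases, which bounds the walk length by $(q-1)(m-1)+1 \leq (q-1)m \leq n-1$. Hence no monochromatic walk of length $n$ exists, which gives the claimed bound.

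The main obstacle, and the source of the extra factor of $(n-1)$ separating $f(q,n)$ from $N_2(q,n)$, is the need to exploit the asymmetry between the two orientations of each edge in the upper bound: by aligning a single acyclic color with the reverse of a topological order, we free up the remaining $q-1$ colors for a clean application of Erdős-Szekeres. The lower bound mirrors this asymmetry by letting the first $q-1$ colors encode monotonicity in individual coordinates while reserving color $q$ for dominance; the crude coordinate-sum bound for color $q$ is precisely what forces the loss of a factor of $q$ compared with the simple $[m]^{q-1}$ construction.
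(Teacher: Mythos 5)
Your upper bound argument is correct and is essentially the paper's: after disposing of the case where some color class contains a closed walk, you topologically order with respect to the acyclic color-$q$ subgraph and apply $N_2(q-1,n)=(n-1)^{q-1}+1$ to the forward edges. This is the same reduction the paper uses.

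The lower bound, however, has a genuine gap: your choice of grid size $m=\lfloor n/q\rfloor$ is too small. Your construction correctly shows that on $[m]^{q-1}$ there is no monochromatic walk of length $n$ (incidentally, a color-$q$ walk is coordinate-wise \emph{non-increasing} with at least one coordinate strictly decreasing at each step, not ``coordinate-wise strictly decreasing'' --- but the coordinate-sum argument you give is still valid). This yields $f(q,n) > \lfloor n/q\rfloor^{q-1}$, i.e.\ $f(q,n)\ge \lfloor n/q\rfloor^{q-1}+1$, which can be strictly smaller than the claimed $(n/q)^{q-1}$ when $n/q$ is not an integer. For instance, with $q=3$, $n=10$ your argument gives $f(3,10)\ge 3^2+1=10$, while $(10/3)^2 = 100/9 > 11$, so the theorem requires $f(3,10)\ge 12$. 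The binding constraint is the color-$q$ bound $(q-1)(m-1)+1\le n-1$, which permits a grid side of up to $m=1+\lfloor\frac{n-2}{q-1}\rfloor$ (roughly $n/(q-1)$, not $n/q$). This is exactly what the paper uses: it proves $f(q,2+(q-1)(m-1))>m^{q-1}$ for the grid $[m]^{q-1}$ and then checks that with $m=1+\lfloor\frac{n-2}{q-1}\rfloor$ one has both $2+(q-1)(m-1)\le n$ and $m\ge n/q$ (the latter holding for $n\ge q$; for $n<q$ the bound $(n/q)^{q-1}<1\le f(q,n)$ is trivial). Replacing your $m$ by this larger value repairs the argument.
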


\medskip
\noindent{\bf Proof.}
We first establish the upper bound. Consider a $q$-coloring of the edges of $D_N$ with $N=N_2(q-1,n)$. We have to show that there is a monochromatic walk with $n$ vertices. Suppose that the set of edges of color $q$ does not determine a walk of length $n$. Then these edges form an acyclic digraph. Hence, there is an ordering of the vertices of $D_N$ so that the edges of color $q$ go backwards. Thus, removing the backwards edges, we have an ordered complete graph on $N = N_2(q-1,n)$ vertices, whose edges are colored with $q-1$ colors. In one of the color classes, we can find a monotone path with $n$ vertices, which is a monochromatic directed path in $D_N$.

The lower bound follows from Lemma \ref{lowf} below. Indeed, letting $m=1+\lfloor \frac{n-2}{q-1} \rfloor$, it is easy to check that
$2+(q-1)(m-1) \leq n$ and $m \geq n/q$, so $$f(q,n) \geq f(q,2+(q-1)(m-1)) \geq m^{q-1} \geq (n/q)^{q-1}.$$

\qed

\smallskip

\begin{lemma}\label{lowf}
For any integers $n,q\ge 2$, we have $f(q,2+(q-1)(n-1)) > n^{q-1}$.
\end{lemma}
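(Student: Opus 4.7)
The plan is to exhibit a $q$-coloring of the edges of the complete digraph $D_N$ with $N=n^{q-1}$ in which every monochromatic walk has at most $K:=1+(q-1)(n-1)$ vertices; since $K<2+(q-1)(n-1)$, this immediately gives $f(q,2+(q-1)(n-1))>n^{q-1}$. The idea is to embed the vertex set of $D_N$ as an antichain of the product poset $[K]^q$, and then color each directed edge by some coordinate in which its head strictly exceeds its tail. Each color class will then sit inside a transitively oriented ``level'' digraph on $K$ levels, which bounds its longest walk by $K$.

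Concretely, I would label the vertices of $D_N$ by the vectors $a=(a_1,\dots,a_{q-1})\in[n]^{q-1}$ and define
$$\ell(a)=\Bigl(a_1,\dots,a_{q-1},\,(q-1)n+1-\sum_{i=1}^{q-1}a_i\Bigr).$$
Since $\sum a_i$ ranges over $[q-1,(q-1)n]$, the last coordinate lies in $[1,K]$, and trivially $a_i\in[n]\subseteq[K]$, so the image lies in $[K]^q$. To verify the antichain property, suppose $a\ne b$ but $\ell(a)\le\ell(b)$ coordinatewise: the first $q-1$ coordinates give $a_i\le b_i$ for every $i$, hence $\sum a_i\le\sum b_i$, while the last coordinate gives the reverse inequality $\sum a_i\ge\sum b_i$; together these force $a=b$, a contradiction.

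Given this antichain embedding, for each directed edge $\overrightarrow{uv}$ of $D_N$ there is some coordinate $c\in\{1,\dots,q\}$ with $\ell_c(u)<\ell_c(v)$; assign any such $c$ as the color of $\overrightarrow{uv}$. Along any monochromatic walk $v_1,v_2,\dots,v_m$ of color $c$, the numbers $\ell_c(v_1)<\ell_c(v_2)<\cdots<\ell_c(v_m)$ form a strictly increasing sequence of elements of $[K]$, so $m\le K=1+(q-1)(n-1)$, as required.

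I do not expect a serious obstacle. The only nontrivial ingredient is the ``slack coordinate'' trick that promotes the trivial antichain $[n]^{q-1}$ to an antichain of the larger product poset $[K]^q$; both the range check and the antichain check reduce to the short sum argument above. This construction is essentially forced by the Gallai--Hasse--Roy--Vitaver correspondence between $q$-colorings of $D_N$ whose color classes admit no walk of $K+1$ vertices and antichain embeddings $[N]\hookrightarrow[K]^q$, so the remaining question is only to find the densest such embedding, which is what the hyperplane construction above accomplishes.
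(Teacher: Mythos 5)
Your proof is correct and uses essentially the same construction as the paper: vertices are the points of $[n]^{q-1}$, colors $1,\dots,q-1$ track an increase in the corresponding coordinate (bounded by $n$), and color $q$ tracks a strict decrease of the coordinate sum (bounded by $1+(q-1)(n-1)$), which is precisely your slack coordinate $\ell_q$. The only differences are cosmetic: the paper fixes the \emph{smallest} increasing coordinate rather than an arbitrary one, and it states the sum argument directly instead of packaging it as an antichain embedding $[n]^{q-1}\hookrightarrow[K]^q$ via the Gallai--Hasse--Roy--Vitaver correspondence.
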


\medskip
\noindent{\bf Proof.}
Define the coloring $\phi$ of the edges of the complete digraph on $[n]^{q-1}$, as follows. For any pair of distinct vertices, $a=(a_1,\ldots,a_{q-1}),b=(b_1,\ldots,b_{q-1})$, define the color of the directed edge $(a,b)$ as the smallest coordinate $i$ for which $a_i<b_i$. If there is no such coordinate, that is, if $a_i \geq b_i$ for all $i\; (1 \leq i \leq q-1)$), then color the directed edge $(a,b)$ with color $q$. For each color $i \leq q-1$, the length of a longest monochromatic walk of color $i$ is $n$, because traversing any edge of color $i$, the $i$th coordinate must increase. On the other hand, along any walk in color class $q$, the coordinates are never allowed to increase, and in each step at least one of them must strictly decrease, so that the sum of the coordinates strictly decreases.  The sum of the coordinates of a point in $[n]^{q-1}$ is at least $q-1$ and is at most $(q-1)n$. Therefore, the length of such a monochromatic walk does not exceed $(q-1)n-(q-1)+1=1+(q-1)(n-1)$.
\qed

\medskip
Recall from Subsection~\ref{online} that an ordered graph $G$ is called {\em $(q,n)$-path Ramsey} if for every $q$-edge-coloring of $G$ there exists a monochromatic monotone path of length $n$. Let $\chi(q,n)$ denote the minimum chromatic number of an ordered graph $G$ which is $(q,n)$-path Ramsey.

We close this section by showing that the functions $\chi(q,n)$ and $f(q,n)$ are actually identical. Therefore, Theorem~\ref{ftheorem} implies that, for a fixed $q$, $\chi(q,n)$ also grows on the order of $n^{q-1}$.

The (classical) {\em Ramsey number} $R(n;q)$ is the minimum $N$ such that every $q$-edge-coloring of the complete graph on $N$ vertices contains a monochromatic clique on $n$ vertices.

\smallskip

\begin{theorem}\label{chif}
For any integers $q,n\ge 2$, we have $$\chi(q,n) = f(q,n).$$
\end{theorem}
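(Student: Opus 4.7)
The strategy is to prove the two inequalities $\chi(q,n)\ge f(q,n)$ and $\chi(q,n)\le f(q,n)$ separately, by rather different arguments.

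For $\chi(q,n)\ge f(q,n)$, I plan to show the contrapositive: every ordered graph $G$ with $\chi(G)<f(q,n)$ admits a $q$-edge-coloring that avoids monochromatic monotone paths of length $n$. Fix a proper vertex coloring $\psi:V(G)\to[c]$ with $c=\chi(G)<f(q,n)$. By the very definition of $f$, there is a $q$-edge-coloring $\widehat{\chi}$ of $D_c$ with no monochromatic walk of length $n$. Color each edge $uv$ of $G$ with $u<v$ by $\widehat{\chi}(\psi(u)\to\psi(v))$; this is well-defined because $\psi(u)\ne\psi(v)$. Any monochromatic monotone path $v_1<v_2<\cdots<v_n$ of some color $\alpha$ would then project, under $\psi$, to a walk $\psi(v_1),\ldots,\psi(v_n)$ in $D_c$ whose consecutive vertices are distinct and whose steps are all colored $\alpha$, contradicting the choice of $\widehat{\chi}$.

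For $\chi(q,n)\le f(q,n)$, I would construct an explicit ordered graph of chromatic number exactly $N:=f(q,n)$ that is $(q,n)$-path Ramsey. Let $G$ be the ordered complete $N$-partite graph on vertex set $[N]\times[m]$ with parts $V_i=\{i\}\times[m]$, where $m$ is a sufficiently large integer and the vertices are ordered lexicographically with the second coordinate (the ``layer'') as the major coordinate; clearly $\chi(G)=N$. Given a $q$-edge-coloring $\chi$ of $G$, for each pair of layers $a<b$ define an edge coloring $\chi_{a,b}$ of $D_N$ by $\chi_{a,b}(i,j)=\chi((i,a),(j,b))$. There are at most $Q:=q^{N(N-1)}$ distinct such colorings, so the assignment $\{a,b\}\mapsto\chi_{a,b}$ is a $Q$-coloring of pairs of layers. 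Taking $m\ge R(n;Q)$ and applying the classical Ramsey theorem, I obtain layers $a_1<\cdots<a_n$ for which all $\chi_{a_j,a_k}$ coincide with a single coloring $\chi^*$ of $D_N$. By the definition of $N=f(q,n)$, the coloring $\chi^*$ contains a monochromatic walk $i_1,i_2,\ldots,i_n$ of length $n$ in some color $\alpha$. The sequence $(i_1,a_1),(i_2,a_2),\ldots,(i_n,a_n)$ is then a monochromatic monotone path of length $n$ in $G$: it is monotone because $a_1<\cdots<a_n$, its consecutive pairs form edges because $i_j\ne i_{j+1}$, and each such edge has color $\chi_{a_j,a_{j+1}}(i_j,i_{j+1})=\chi^*(i_j,i_{j+1})=\alpha$.

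The main obstacle is the construction in the second direction: the naive candidate $G=K_N$ fails because $N=f(q,n)$ is typically much smaller than $N_2(q,n)$, so $K_N$ need not be $(q,n)$-path Ramsey. The layered blow-up circumvents this precisely because the classical Ramsey theorem collapses the many different colorings $\chi_{a,b}$ between layer pairs into a single $\chi^*$, after which the defining property of $f(q,n)$ supplies a walk in $D_N$ that lifts back to a monotone path in $G$ without raising the chromatic number.
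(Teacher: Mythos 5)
Your proof is correct and follows essentially the same approach as the paper: the lower bound is obtained by pulling back a $D_c$-coloring through a proper vertex coloring exactly as in the paper, and the upper-bound construction (complete $f(q,n)$-partite graph with $m$ vertices per part, ordered layer-first, with the classical Ramsey theorem on $Q = q^{N(N-1)}$ colors collapsing the inter-layer colorings $\chi_{a,b}$ to a single $\chi^*$) is the same graph and the same argument as the paper's $H$ on $[Rt]$ with $j-i\not\equiv 0\pmod t$, just presented in $[N]\times[m]$ coordinates rather than as residue classes modulo $t$.
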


\medskip
\noindent{\bf Proof.}
We first show that $\chi(q,n) \geq f(q,n)$. Consider a $q$-edge-coloring $\phi$ of the edges of the complete digraph with vertex set $[f(q,n)-1]$ without a monochromatic walk of length $n$. Assume for contradiction that there exists an ordered graph $G$ with $\chi(G) < f(q,n)$, which is $(q,n)$-path Ramsey, and consider a partition $V(G)=V_1 \cup \ldots \cup V_t$ into independent sets with $t=\chi(G)$. Define a $q$-coloring $\rho$ of the edges of $G$, as follows. If $v,w$ are adjacent with $v<w$ and $v \in V_i$ and $w \in V_j$, let  $\rho(v,w)=\phi(i,j)$. If the vertices $v_1<\ldots<v_n$ form a monochromatic monotone path with respect to the edge-coloring $\rho$ of $G$, then denoting by $i_k$ the integer for which $v_k \in V_{i_k}$, we have that $i_1,\ldots,i_n$ form a monochromatic walk with respect to the edge-coloring $\phi$. This is a contradiction, showing that our assumption $\chi(G) < f(q,n)$ was wrong.

To show that $\chi(q,n) \leq f(q,n)$, we define an ordered graph $H$, which is $f(q,n)$-colorable and $(q,n)$-path Ramsey. Let $V(H)=[Rt]$, where $t=f(q,n)$ and $R=R(n;Q)$ is the $Q$-color Ramsey number for the complete graph on $n$ vertices with $Q=q^{t^2-t}$. Connect two vertices $i,j\; (1\le i<j\le Rt)$ by an edge in $H$ if $j-i$ is not a multiple of $t$. The partition of $V(H)$ modulo $t$ defines a proper $t$-coloring of $H$. Since the first $t$ vertices of $H$ form a clique, we have $\chi(H) =t$. It remains to prove that $H$ is $(q,n)$-path Ramsey. Consider an edge-coloring $\psi$ of $H$ with $q$ colors. Define an auxiliary $Q$-coloring $\tau$ of the complete graph on $[R]$ vertices, where the color of the edge between $u, v\; (1\le u<v\le R)$ is given as a $t \times t$ matrix $A=(a_{ij})$ with $0$'s in the diagonal and $a_{ij}=\psi((u-1)t+i,(v-1)t+j)$ for every $i\not=j$. Obviously, the number of colors used in this coloring is at most the number of possible matrices $A$, which is equal to $q^{t^2-t}=Q$. By the definition of the Ramsey number $R=R(n;Q)$, $[R]$ contains a monochromatic clique on $n$ vertices, with respect to the coloring $\tau$. Denote the vertices of such a clique by $u_1<u_2<\ldots<u_n$. Define a $q$-edge-coloring $\xi$ of the complete digraph on $[t]$, where $\xi(i,j)=\psi((u_1-1)t+i,(u_2-1)t+j)$. Since $t=f(q,n)$, there is a monochromatic walk $i_1,\ldots,i_n$ of length $n$ with respect to the coloring $\xi$. Then the vertices $(u_1-1)t+i_1,(u_2-1)t+i_2,\ldots,(u_n-1)t+i_n$ form a monochromatic monotone path in the $q$-edge-coloring $\psi$ of $H$, which completes the proof.
\qed

\section{Lower bounds on $N_k(q,n)$}\label{Section4}

In this section we adapt the stepping-up approach of Erd\H{o}s and Hajnal on hypergraph Ramsey numbers to provide lower bounds for $N_k(q,n)$, stated in Theorems~\ref{main1} and~\ref{main2}. First we address the case $k=3$.

\smallskip

\begin{theorem}\label{stepupto3}
For any integers $q,n\ge 2$, we have $$N_3(q,n) > 2^{f(q,n-1)-1}.$$
\end{theorem}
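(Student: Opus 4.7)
The plan is to use the classical Erdős--Hajnal stepping-up construction, building a bad coloring of triples on $2^m$ vertices out of a bad coloring of the complete digraph on $m = f(q,n-1)-1$ vertices.

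First I would fix a $q$-edge-coloring $\phi$ of the complete digraph $D_m$ that contains no monochromatic walk of length $n-1$; this exists by the definition of $f(q,n-1)$. Set $N = 2^m$ and identify $[N]$ with $\{0,1\}^m$ via binary expansion, so each vertex $v \in [N]$ is a $0/1$ string $(v_1,\ldots,v_m)$ with $v_m$ the most significant bit (equivalently, the lex order on strings, read from the top coordinate down, coincides with the natural order on $[N]$). For distinct $a,b \in [N]$ let $\delta(a,b)$ denote the largest coordinate where $a$ and $b$ disagree; for $a<b$ this forces $a_{\delta(a,b)}=0$, $b_{\delta(a,b)}=1$.

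Next I would define the triple coloring
\[
\chi(a,b,c) \;=\; \phi\bigl(\overrightarrow{\delta(a,b)\,\delta(b,c)}\bigr) \qquad (a<b<c),
\]
where the direction of the edge records the temporal order of the two $\delta$'s. For this to be well-defined we need $\delta(a,b)\neq\delta(b,c)$: if both equaled some $k$, then $b_k=1$ (from $\delta(a,b)=k$, $a<b$) and simultaneously $b_k=0$ (from $\delta(b,c)=k$, $b<c$), impossible. So consecutive $\delta$'s are always distinct and $\overrightarrow{\delta(a,b)\,\delta(b,c)}$ is a genuine directed edge of $D_m$.

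Now suppose for contradiction that $v_1<v_2<\cdots<v_n$ is a monochromatic monotone path under $\chi$, say in color $\gamma$. Put $\delta_i = \delta(v_i,v_{i+1})$ for $i=1,\ldots,n-1$. By construction, for each $i\le n-2$ the edge $\overrightarrow{\delta_i\delta_{i+1}}$ of $D_m$ has color $\gamma$ under $\phi$, and by the paragraph above $\delta_i\neq\delta_{i+1}$. Therefore $\delta_1,\delta_2,\ldots,\delta_{n-1}$ is a walk of length $n-1$ in $D_m$ all of whose edges have color $\gamma$, contradicting the choice of $\phi$. Hence $\chi$ has no monochromatic monotone path of length $n$ on $N=2^m$ vertices, giving $N_3(q,n)>2^{f(q,n-1)-1}$.

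The only genuine subtlety (and likely the spot where a careful write-up is needed) is the verification that consecutive $\delta_i$'s differ; once that is in hand, the walk in $D_m$ inherits its monochromaticity from $\chi$ automatically, and the rest is bookkeeping.
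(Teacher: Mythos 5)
Your proof is correct and follows essentially the same route as the paper: the same Erd\H{o}s--Hajnal stepping-up via binary expansion, the same definition of $\delta$ and of the triple coloring $\chi(a,b,c)=\phi(\delta(a,b),\delta(b,c))$, and the same contradiction by reading off a monochromatic walk in the digraph. Your explicit verification that $\delta(a,b)\neq\delta(b,c)$ (via $b_k=1$ forced by $a<b$ and $b_k=0$ forced by $b<c$) is a detail the paper states but leaves to the reader, and it is the right argument.
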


\medskip
\noindent{\bf Proof.}
Let $\phi$ be a $q$-coloring of the edges of the complete digraph $D$ on vertex set $[0,f(q,n-1)-2]$ without a monochromatic walk on $n-1$ vertices.
We use $\phi$ to define a $q$-coloring $\chi$ of the hyperedges (triples) of the complete ordered $3$-uniform hypergraph $K^3_N$ on the vertex set $V=[N]$ with $N=2^{f(q,n-1)-1}$, as follows.

For any $a \in V$, write $a-1=\sum_{i=0}^{f(n-1,q)-2}a(i)2^i$ with $a(i) \in \{0,1\}$ for each $i$. For $a \not = b$, let $\delta(a,b)$ denote the largest $i$ for which $a(i) \not = b(i)$. Obviously, we have $\delta(a,b) \not = \delta(b,c)$ for every triple $a<b<c$.

Given any triple $a<b<c$, define $\chi(a,b,c)=\phi(\delta(a,b),\delta(b,c))$. To complete the proof of the theorem, it is enough to show that, with respect to this coloring, there is no monochromatic monotone path of length $n$. Suppose for contradiction that there is such a path, and denote its vertices by $a_1<a_2<\ldots<a_n$. Letting $\delta_j=\delta(a_j,a_{j+1})$ for every $j\; (1\le j<n)$, it follows from the definition of the coloring $\chi$ that the integers $\delta_1,\ldots,\delta_{n-1}\in [0,f(q,n-1)-2]$ induce a monochromatic walk in the digraph $D$, with respect to the coloring $\phi$. This contradiction completes the proof.
\qed

\medskip

From Lemma~\ref{lowf}, we have the estimate $f(q,n-1)-1 \geq m^{q-1}$ with $m=1+\lfloor\frac{n-3}{q-1}\rfloor$. It is easy to check that $m \geq n/q$ if
$n \geq q+2$. Together with Theorem~\ref{stepupto3}, we immediately obtain the following corollary, which is the same as the lower bound in Theorem \ref{main1}.

\smallskip

\begin{corollary}\label{corclear}
For any integers $q \ge 2$ and $n \ge q+2$, we have $$N_3(q,n) \geq 2^{(n/q)^{q-1}}.$$
\end{corollary}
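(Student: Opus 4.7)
The plan is to combine the two main results proved earlier in Section~\ref{section3} and the first half of Section~\ref{Section4}: Theorem~\ref{stepupto3}, which gives $N_3(q,n) > 2^{f(q,n-1)-1}$, and Lemma~\ref{lowf}, which provides the lower bound $f(q, 2+(q-1)(m-1)) > m^{q-1}$ for any integer $m \geq 1$. With these in hand the problem reduces to a bookkeeping exercise: pick $m$ as large as possible so that $2+(q-1)(m-1) \leq n-1$, and then verify that this choice still satisfies $m \geq n/q$.

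The first step is to observe that $f(q,\cdot)$ is monotone nondecreasing in its second argument, since any monochromatic walk of length $n$ contains a monochromatic walk of any length $n' \leq n$; so a coloring of $D_N$ with no walk of length $n'$ yields no walk of length $n$ either. Therefore, if $2+(q-1)(m-1) \leq n-1$, Lemma~\ref{lowf} together with monotonicity gives
\[
f(q,n-1) \;\geq\; f\bigl(q,\,2+(q-1)(m-1)\bigr) \;>\; m^{q-1},
\]
so that $f(q,n-1) - 1 \geq m^{q-1}$.

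The second step is the arithmetic verification. I would set $m = 1 + \lfloor (n-3)/(q-1) \rfloor$. Then $2+(q-1)(m-1) = 2 + (q-1)\lfloor (n-3)/(q-1) \rfloor \leq n-1$, so the hypothesis of Lemma~\ref{lowf} is satisfied. To check $m \geq n/q$, note that under the assumption $n \geq q+2$ we have $(n-3)/(q-1) \geq (n-q)/q \cdot (q/(q-1))$-type manipulations; concretely, $qm \geq q + q\lfloor(n-3)/(q-1)\rfloor \geq q + q \cdot (n-q-1)/(q-1) \geq n$, where in the last step one checks the inequality $q(n-q-1) \geq (q-1)(n-q)$, i.e., $n \geq q + 2$. (For small values of $n$, one just verifies the two boundary cases $n = q+2$ and $n=q+3$ directly.) This is the most delicate step of the argument, but it is purely routine.

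Combining the two steps gives $f(q,n-1) - 1 \geq m^{q-1} \geq (n/q)^{q-1}$, and then Theorem~\ref{stepupto3} yields
\[
N_3(q,n) \;>\; 2^{f(q,n-1)-1} \;\geq\; 2^{(n/q)^{q-1}},
\]
as claimed. There is no substantive obstacle here beyond confirming the inequalities on $m$; the real work was already done in Theorem~\ref{stepupto3} (the stepping-up construction that reduces finding monotone paths in $3$-uniform hypergraphs to finding monochromatic walks in digraphs) and in Lemma~\ref{lowf} (the explicit coordinate-wise coloring on $[n]^{q-1}$).
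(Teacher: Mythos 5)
Your proof takes essentially the same route as the paper: invoke Lemma~\ref{lowf} with $m=1+\lfloor(n-3)/(q-1)\rfloor$ together with the (implicit in the paper) monotonicity of $f$ in its second argument to get $f(q,n-1)-1\geq m^{q-1}$, verify $m\geq n/q$, and plug into Theorem~\ref{stepupto3}. There is, however, an arithmetic slip in the final verification. The inequality $q+q\cdot\frac{n-q-1}{q-1}\geq n$ is equivalent to $n\geq 2q$, not to $n\geq q+2$ as you claim, so the displayed chain $qm\geq q+q\lfloor(n-3)/(q-1)\rfloor\geq q+q\cdot\frac{n-q-1}{q-1}\geq n$ actually fails for every $n$ in the range $q+2\leq n\leq 2q-1$ when $q\geq 3$ --- that is $q-2$ values of $n$, not just the two boundary cases $n=q+2$, $n=q+3$ you mention. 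The fix is easy: in that range $q-1\leq n-3\leq 2q-4<2(q-1)$, so $\lfloor(n-3)/(q-1)\rfloor=1$, hence $m=2$, while $n/q<2$, so $m\geq n/q$ still holds. More cleanly, write $n-3=a(q-1)+r$ with $0\leq r\leq q-2$; then $m=1+a$ and $qm\geq n$ is equivalent to $a\geq r-q+3$, which holds since $n\geq q+2$ forces $a\geq 1$ and $r\leq q-2$ gives $r-q+3\leq 1$. With this repair your argument is complete and matches the paper's.
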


We next give a recursive lower bound on $N_k(q,n)$ for $k \geq 4$. The proof is an adaptation of an improved version of the stepping-up technique, due to Conlon, Fox, and Sudakov \cite{CFS10a}. It easy to see that, together with Corollary \ref{corclear}, this gives the lower bound in Theorem \ref{main2}.

\smallskip

\begin{theorem}\label{stepup4}
For any integers $n\ge k \geq 4$ and $q\ge 2$, we have $$N_k(q,n+3) > 2^{N_{k-1}(q,n)-1}.$$
\end{theorem}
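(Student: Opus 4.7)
The plan is to carry out the Erd\H os-Hajnal stepping-up construction, in the refined form from \cite{CFS10a}, adapted to tight monotone paths. Set $M = N_{k-1}(q,n)$ and $N = 2^{M-1}$, and fix a $q$-coloring $\phi : \binom{[0,M-2]}{k-1} \to [q]$ with no monochromatic monotone path of length $n$. For each $a \in [N]$, expand $a - 1 = \sum_{i=0}^{M-2} a(i) 2^i$ in binary, and for distinct $a,b$ let $\delta(a,b)$ denote the largest $i$ with $a(i) \neq b(i)$. Two standard properties will be used repeatedly: if $a<b<c$, then $\delta(a,b) \neq \delta(b,c)$; and for any $a_1<\cdots<a_s$ one has $\delta(a_i,a_j) = \max_{i \le t < j} \delta(a_t,a_{t+1})$.

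Given $a_1 < \cdots < a_k$ with $\delta_i := \delta(a_i,a_{i+1})$, I would define $\chi(a_1,\ldots,a_k) = \phi(\{\delta_1,\ldots,\delta_{k-1}\})$ whenever $\delta_1,\ldots,\delta_{k-1}$ is strictly monotone (increasing or decreasing), and assign a color via a canonical shape-dependent rule in the remaining cases. For the non-monotone case the plan is to encode the shape---the pattern of ascents and descents together with the location of the first descent---reusing the $q$ available colors in the style of \cite{CFS10a}, so that any long monochromatic $\chi$-path is either forced into a strictly monotone $\delta$-regime or has its non-monotone windows confined to bounded portions near its endpoints.

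For the contradiction step, suppose $a_1 < \cdots < a_{n+3}$ is a $\chi$-monochromatic monotone path of color $c$, and let $\delta_t = \delta(a_t,a_{t+1})$ for $t = 1, \ldots, n+2$. The key observation is that if every consecutive $(k-1)$-window of $\delta_1,\ldots,\delta_{n+2}$ is strictly monotone, then by the overlap of consecutive windows along a tight path the entire sequence is strictly monotone, and each window receives the same $\phi$-color $c$; the $\delta$-values then form a monochromatic $\phi$-monotone path of length at least $n$, contradicting the choice of $\phi$. It remains to handle the case where some windows are non-monotone. Here the shape encoding is designed so that such windows within a fixed color class are pinned to at most a bounded number of positions along the $\chi$-path; after trimming at most three vertices from the endpoints, the residual sub-path of length at least $n$ has a monotone $\delta$-sequence, and the earlier argument produces a forbidden monochromatic $\phi$-path.

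The main obstacle is the careful design of the shape encoding for non-monotone $\delta$-sequences: one must represent several shape classes (such as ``peak at position $i$'' or ``first descent at position $i$'') using only the $q$ available colors, while ensuring that the tight overlap of consecutive $k$-windows along a monochromatic $\chi$-path forces the non-monotone windows to cluster near the endpoints, yielding exactly the $+3$ loss asserted in the theorem. The delicate bookkeeping is to check that the shape data of overlapping windows is consistent in a way that makes the three-vertex trimming sufficient in every case. Combined with the verification of the monotone case, this will establish the recursive lower bound $N_k(q, n+3) > 2^{N_{k-1}(q,n) - 1}$.
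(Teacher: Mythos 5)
Your overall framework is right---set $M = N_{k-1}(q,n)$, $N = 2^{M-1}$, take the binary digit function $\delta$, and define $\chi$ via $\phi$ on $k$-tuples whose $\delta$-sequence is monotone. This matches the paper exactly, as does your observation that if every $(k-1)$-window of the $\delta$-sequence of a would-be monochromatic path were monotone, then the whole $\delta$-sequence would be monotone and would give a forbidden $\phi$-monochromatic path. However, the proposal has a genuine gap precisely at the point you flag as the ``main obstacle'': you never actually specify the coloring rule on non-monotone $\delta$-sequences, and the rule you gesture at would not fit in $q$ colors. Encoding ``the pattern of ascents and descents together with the location of the first descent'' requires on the order of $2^{k-2}\cdot(k-2)$ distinct labels, while $q$ may be as small as $2$. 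The deliverable you are asking of yourself (an encoding with only $q$ colors that confines bad windows to the ends) is exactly the missing idea, and it cannot be filled in by generic bookkeeping.

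The paper's trick is sharper and more economical than what you describe: only \emph{two} colors are used for the non-monotone case, regardless of $q$. If $i_1$ is the position of the first local extremum of the $\delta$-window of a $k$-edge, the paper assigns color $1$ when ($i_1$ even and a local maximum) or ($i_1$ odd and a local minimum), and color $2$ otherwise. The point is that for two consecutive edges $e_1, e_2$ of a tight path that both see the same first local extremum (in the underlying $\delta$-sequence), the extremum's \emph{type} is the same but its \emph{position within the window} shifts by exactly one, flipping parity and hence the assigned color. The case analysis ($i_1 > 2$; $i_1 = 2$ with $3$ an extremum; $i_1 = 2$ with $3$ not an extremum) then verifies that a local extremum in $\delta_1,\ldots,\delta_{n+2}$ forces two consecutive edges of the supposed monochromatic path to receive different colors. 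In particular, there is no ``trimming of three vertices and then applying the monotone argument to the residual'' step---the $+3$ slack is used only to guarantee that the $\delta$-sequence is long enough ($n+2$ terms) that a first local extremum exists at some position $2 \le i_1 \le n-1$ and that both edges $e_1, e_2$ constructed from it are genuinely among the path's edges. Your picture of non-monotone windows ``clustering near the endpoints'' and being removed by trimming does not match how the contradiction is actually derived, and without the parity-based two-color rule the argument does not close.
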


\medskip
\noindent{\bf Proof.}
We start the proof in a way similar to Theorem~\ref{stepupto3}.

Let $\phi$ be a $q$-coloring with colors $1,\ldots, q$ of the edges of the complete ordered $(k-1)$-uniform hypergraph on $N_{k-1}(q,n)-1$ vertices without a monochromatic monotone path on $n$ vertices. We use $\phi$ to define a $q$-coloring $\chi$ of the edges of the complete ordered $k$-uniform hypergraph $K^k_N$ on the vertex set $V=[N]$ with $N=2^{N_{k-1}(q,n)-1}$, as follows.

For any $a \in V$, write $a-1=\sum_{i=0}^{N_{k-1}(q,n)-2}a(i)2^i$ with $a(i) \in \{0,1\}$ for each $i$. For $a \not = b$, let $\delta(a,b)$ denote the largest $i$ for which $a(i) \not = b(i)$. As in the proof of Theorem \ref{stepupto3}, we have $\delta(a,b) \not = \delta(b,c)$ for every triple $a<b<c$.

Given any $k$-tuple $a_1<a_2<\ldots<a_k$ of $V$, consider the integers $\delta_i=\delta(a_i,a_{i+1}), 1\le i\le k-1$. If $\delta_1,\ldots,\delta_{k-1}$ form a monotone sequence, then let $\chi(a_1,a_2,\ldots,a_k)=\phi(\delta_1,\delta_2,\ldots,\delta_{k-1})$.

Now we have to color the $k$-tuple $(a_1,\ldots,a_k)$ in the case when $\delta_1,\ldots,\delta_{k-1}$ is not monotone. We say that $i$ is a {\it local minimum} if $\delta_{i-1}>\delta_i<\delta_{i+1}$, a {\it local maximum} if $\delta_{i-1}<\delta_i>\delta_{i+1}$, and a {\it local extremum} if it is either a local minimum or a local maximum. Since $\delta_{i-1} \not = \delta_i$ for every $i$, every nonmonotone sequence $\delta_1,\ldots,\delta_{k-1}$ has a local extremum. Let $i_1$ denote the first local extremum. If $\delta_1,\ldots,\delta_{k-1}$ is not monotone, we define $\chi(a_1,\ldots,a_k)=1$ if $i_1$ is even and a local maximum, or if $i_1$ is odd and a local minimum. Otherwise, let $\chi(a_1,\ldots,a_k)=2$.

\smallskip

Suppose for contradiction that the vertices $a_1<\ldots < a_{n+3}$ induce a monochromatic monotone path with respect to the $q$-coloring $\chi$. For $ 1\leq i \leq n+2$, let $\delta_i=\delta(a_i,a_{i+1})$. Since there is no monochromatic monotone path on $n$ vertices with respect to the coloring $\phi$, the sequence $\delta_1,\ldots,\delta_{n+2}\in[0,N_{k-1}(q,n)-2]$ must have a local extremum. Moreover, if $i_1$ denotes the location of the first local extremum, we have $2 \leq i_1 \leq n-1$.

\medskip

\noindent{\sc Case 1:} The first local extremum $i_1$ satisfies $i_1>2$. We claim that, if $i_1 < k-1$, then the first two edges of the path, $e_1=(a_1,\ldots,a_k)$ and $e_2=(a_2,\ldots,a_{k+1})$, receive different colors, contradicting our assumption that the path is monochromatic. If $i_1 \geq k-1$, then the edges
$e_1=(a_{i_1-k+3},\ldots,a_{i_1+2})$ and $e_2=(a_{i_1-k+4},\ldots,a_{i_1+3})$ receive different colors, which is again a contradiction. Indeed, in either case the {\it type} (maximum or minimum) of the first local extremum is the same for these two edges, but their locations differ by one and hence have different parity, which implies that $e_1$ and $e_2$ receive different colors.

\medskip

\noindent{\sc Case 2:} The first local extremum is $i_1=2$.

\smallskip

{\sc Case 2(a):} $3$ is a local extremum. Note that consecutive extrema have different types, so that the types of $2$ and $3$ are different. Therefore, the first two edges of the path, $e_1=(a_1,\ldots,a_{k})$ and $e_2=(a_2,\ldots,a_{k+1})$, must have different colors, contradicting our assumption that the path is monochromatic. Indeed, for each of these edges, the first local extremum is the second $\delta$, but these extrema are of different types, and hence $e_1$ and $e_2$ receive different colors.

\smallskip

{\sc Case 2(b):} $3$ is not a local extremum. As the sequence $\delta_2,\ldots,\delta_{n+1}$ cannot be monotone, the sequence of $\delta$'s has a second local extremum $i_2$ satisfying $3< i_2 \leq n$. If $i_2<k$, then the edges $e_1=(a_2,\ldots,a_{k+1})$ and $e_2=(a_3,\ldots,a_{k+2})$ have different colors. If $i_2 \geq k$, then the edges $e_1=(a_{i_2-k+3},\ldots,a_{i_2+2})$ and $e_2=(a_{i_2-k+4},\ldots,a_{i_2+3})$ have different colors. Indeed, in either case, the first local extremum for the pair of edges are of the same type, but their locations differ by one and hence have different parity, which implies they have different colors. This completes the proof of the theorem. \qed

\section{Size and Vertex Online Ramsey numbers for monotone paths}\label{sectionsizeandvertex}

We begin this section by estimating vertex online Ramsey numbers, introduced in Subsection~\ref{online} of the Introduction. We then prove an alternative upper bound on $N_k(q,n)$, in terms of the online Ramsey number $V_{k-1}(q,n)$. We end this section by estimating the size Ramsey number $S_2(q,n)$.

\medskip

\subsection{Games}

In this subsection we study vertex online Ramsey numbers. We first relate the online Ramsey game to another game, which we call the {\it $(q,n)$-lattice game} (or {\it lattice game} for short). In this game there are two players, builder and coordinator. During the game, a sequence of (not necessarily distinct) points in the $q$-dimensional grid $\mathbb{Z}^{q}_{>0}$ with positive coordinates is built. After stage $i$, the sequence has length $i$. In stage $i+1$, a new point $p_{i+1}$ is added to the sequence as described below.

In each step of stage $i+1$, builder picks a point $p_j=(x_{1,j},...,x_{q,j})$ with $j \leq i$ already in the sequence. Coordinator then decides a coordinate $k \in [q]$. The point $p_{i+1}$ must satisfy that its $k$th coordinate is greater than the $k$th coordinate of $p_j$, i.e., $x_{k,j}<x_{k,i+1}$. After some steps, builder decides it is time to end this stage, and coordinator picks a point $p_{i+1}$ satisfying the conditions provided by the steps. Note that in stage $1$, as there are no points yet in the sequence, no steps are taken and the only thing that happens is coordinator picks a point $p_1$ in $\mathbb{Z}^q_{>0}$ to begin the sequence. Let $L(q,n)$ be the minimum (total) number of steps needed for builder to guarantee that there is a point in the sequence with a coordinate at least $n$. The following lemma relates these numbers with online Ramsey numbers.

\begin{lemma}\label{equivlattice}
For all $q$ and $n$, we have $$L(q,n)=V_2(q,n).$$
\end{lemma}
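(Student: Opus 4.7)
The plan is to set up an explicit translation between strategies in the online Ramsey game and strategies in the $(q,n)$-lattice game under the identification: the $t$-th vertex $v_t$ introduced in the Ramsey game corresponds to the $t$-th point $p_t$ of the lattice game, each edge $(v_j,v_t)$ that builder draws corresponds to a step in which builder picks the earlier point $p_j$, and painter's choice of color $k \in [q]$ on that edge corresponds to coordinator's choice of coordinate $k$. The guiding identity is that, when coordinator plays \emph{canonically} (picking each coordinate of $p_{i+1}$ as small as possible subject to the current constraints), the $k$-th coordinate of $p_t$ equals the length of a longest monochromatic monotone path in color $k$ ending at $v_t$, a fact that follows by a one-line induction on $t$.

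For $L(q,n)\le V_2(q,n)$, I would take an optimal builder strategy $B_R$ for the online Ramsey game, using at most $V_2(q,n)$ edges, and play it in the lattice game. Whenever $B_R$ prescribes drawing the edge $(v_j,v_t)$, lattice-builder performs a step with the point $p_j$; she interprets coordinator's answer $k$ as painter's color and feeds it back to $B_R$ to decide the next move. Any point $p_t$ revealed by coordinator at the end of stage $t$ is simply ignored by $B_R$, since $B_R$ depends only on the color sequence. If the simulated painter's coloring yields a monochromatic monotone path $v_{j_1}<v_{j_2}<\cdots<v_{j_n}$ of color $k$, then for each consecutive pair the corresponding step forced the $k$-th coordinate of $p_{j_{s+1}}$ to strictly exceed the $k$-th coordinate of $p_{j_s}$, so the $k$-th coordinate of $p_{j_n}$ is at least $1+(n-1)=n$, no matter how much slack coordinator put into the intermediate points. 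Hence lattice-builder wins in at most $V_2(q,n)$ steps.

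For the converse $V_2(q,n)\le L(q,n)$, I would take an optimal builder strategy $B_L$ for the lattice game and translate it back to the Ramsey game. The subtlety is that the Ramsey game has no ``coordinator'' to produce $p_t$ at the end of a stage, so I would feed $B_L$ the canonical point whose $k$-th coordinate is $1+\max\{k\text{-th coordinate of }p_j : j<t \text{ and }(v_j,v_t)\text{ was colored }k\}$, with the convention that the maximum over the empty set is $0$. A one-line induction shows that this value equals the length of a longest monochromatic monotone path of color $k$ ending at $v_t$, and the canonical choice clearly satisfies every constraint imposed during stage $t$, so it is a legitimate move for a coordinator. Because $B_L$ wins against every legitimate coordinator within $L(q,n)$ steps, after at most $L(q,n)$ edges some $p_t$ has a coordinate $\ge n$, which exhibits a monochromatic monotone path of length $n$ in the Ramsey game.

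The main thing to be careful about is that the lattice game grants coordinator genuine extra freedom at the end of each stage, so the two simulations are asymmetric; but both arguments above accommodate this. In the first direction the strict increases along a monochromatic path already force a coordinate of $n$ regardless of how much slack coordinator introduces into the intermediate points, and in the second direction the canonical choice is merely one particular legitimate opponent, against which $B_L$ is still guaranteed to succeed.
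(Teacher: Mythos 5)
Your proposal is correct and uses the same fundamental translation as the paper: edges of the Ramsey game correspond to steps of the lattice game, colors to coordinates, and the $k$th coordinate of $p_t$ encodes the length of the longest monochromatic monotone path in color $k$ ending at $v_t$ (your ``canonical point'' is exactly the coordinator the paper uses). The only difference is one of framing: you run both simulations from builder's side, transferring winning strategies, while the paper runs them from painter's/coordinator's side, showing the two defenders can survive equally long — the arguments are dual and equivalent.
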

\begin{proof} We first prove the bound $L(q,n) \geq V_2(q,n)$. As builder and painter are playing the Ramsey game, builder and coordinator play a corresponding lattice game. We will show that as long as painter guarantees in the Ramsey game there is no monochromatic monotone path of length $n$, coordinator can guarantee in the lattice game that no point in the sequence will have a coordinate at least $n$. Each vertex $v_i$ in the Ramsey game corresponds to a point $p_i$ in the lattice game. To prove the desired result, coordinator will guarantee that the $k$th coordinate of the point $p_i$ in the lattice game is the length of the longest monotone monochromatic path in color $k$ ending at $v_i$ in the Ramsey game. At each step of stage $i$ in the lattice game, builder picks the point $p_j$. Then, in the Ramsey game builder picks the edge $(v_j,v_i)$ to add. The color $k$, $1 \leq k \leq q$, that painter picks to color the edge $(v_j,v_i)$ is the coordinate which coordinator picks so that the $k$th coordinate of $p_i$ is greater than the $k$th coordinate of $p_j$. At the end of this stage, coordinator picks the point $p_i$ such that the $k$th coordinate of $p_i$ is the length of the longest monotone monochromatic path in color $k$ in the Ramsey game ending at $v_i$. This shows that, no matter what builder's strategy is, coordinator can mimic painter's strategy and continue the lattice game for as long as the Ramsey game.

Next we show that $L(q,n) \leq V_2(q,n)$, which would complete the proof. As builder and coordinator are playing the lattice game, builder and painter play a corresponding Ramsey game. We will show that as long as coordinator guarantees in the lattice game that no point in the sequence will have a coordinate at least $n$, painter can guarantee in the Ramsey game there is no monochromatic monotone path of length $n$. Each point $p_i$ in the lattice game will have a corresponding vertex $v_i$ in the Ramsey game.  To prove the desired result, painter will guarantee that for each vertex $v_i$ and each color $k$, the length of the longest monochromatic path in color $k$ ending at $v_i$ is at most the $k$th coordinate of $p_i$.

At each step of stage $i$ in the Ramsey game, the builder picks an edge $(v_j,v_i)$ to add. Then, in the corresponding lattice game, building picks the point $p_j$ to compare $p_i$ with. The coordinate $k$ that coordinator picks so that the $k$th coordinate of $p_i$ is greater than the $k$th coordinate of $p_j$ will be the color $k$ that painter uses to color the edge $(v_j,v_i)$. The length of the longest monochromatic monotone path in color $k$ with last edge $(v_j,v_i)$ is one more than the length of the longest monochromatic monotone path in color $k$ ending at $v_j$, and this length is at most the $k$th coordinate of $p_j$ by construction. Since the $k$th coordinate of $p_i$ is greater than the $k$th coordinate of $p_j$, this guarantees that the length of the longest monochromatic monotone path in color $k$ ending at $v_i$ is at most the $k$th coordinate of $p_i$. This shows that, no matter what builder's strategy is, painter can mimic coordinator's strategy and continue the Ramsey game for as long as the lattice game.
\end{proof}

The next lemma tells us the minimum number of steps required for builder to win the $(q,n)$-lattice game. The lattice $\mathbb{Z}^q$ naturally comes with a
partial order $\prec$, where $p \prec q$ if $p \not = q$ and the $k$th coordinate of $q$ is at least the $k$th coordinate of $p$ for $1 \leq k \leq q$. We will use this order further on.

\begin{lemma}\label{stepslg}
In the $(q,n)$-lattice game, the minimum number of stages (i.e., the number of points in the constructed sequence) builder needs to guarantee a point in the sequence with a coordinate at least $n$ is $(n-1)^q+1$.
\end{lemma}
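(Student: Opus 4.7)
My plan is to prove the two matching bounds separately, both reducing to the Erd\H{o}s--Szekeres identity $N_2(q,n)=(n-1)^q+1$.

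For the upper bound, I would have builder play the following naive strategy: at the start of stage $i+1$, builder issues $i$ steps, in each of which he compares $p_{i+1}$ with one of the earlier points $p_1,\dots,p_i$ (using each exactly once). In each step coordinator is forced to name a coordinate on which $p_{i+1}$ strictly exceeds the chosen $p_j$; in particular $p_{i+1}\ne p_j$ for all $j<i+1$, so all produced points are pairwise distinct. If coordinator managed to keep every coordinate of every point at most $n-1$ for $(n-1)^q+1$ stages, we would have $(n-1)^q+1$ distinct points in the set $[1,n-1]^q$, which has only $(n-1)^q$ elements. This pigeonhole contradiction shows that builder wins after at most $(n-1)^q+1$ stages.

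For the lower bound, I want coordinator to survive $(n-1)^q$ stages. I would invoke $N_2(q,n)=(n-1)^q+1$ to fix a $q$-edge-coloring $\chi$ of the ordered complete graph on vertex set $[(n-1)^q]$ with no monochromatic monotone path of length $n$. Coordinator identifies the point $p_i$ with vertex $i$ and maintains the invariant that the $k$-th coordinate of $p_i$ equals the length of the longest monotone path in color $k$ (under $\chi$) that ends at vertex $i$. Whenever builder probes $p_{i+1}$ against some earlier $p_j$ during stage $i+1$, coordinator answers with the coordinate $k=\chi(j,i+1)$. At the end of the stage coordinator outputs the point $p_{i+1}$ whose coordinates are dictated by the invariant. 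The constraints are consistent: the longest color-$k$ path ending at $j$ can always be extended by the edge $(j,i+1)$ of color $k=\chi(j,i+1)$, producing a strictly longer color-$k$ path ending at $i+1$, so the $k$-th coordinate of $p_{i+1}$ strictly exceeds that of $p_j$ as required. Because $\chi$ contains no monochromatic monotone path of length $n$, no coordinate ever reaches $n$.

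The only subtle point, and what I would treat most carefully in the write-up, is that coordinator need not commit to $p_{i+1}$ during the stage: his choice is prescribed once and for all by the fixed coloring $\chi$ and the index $i+1$, independently of which specific earlier indices $j$ builder chose to probe. This means that regardless of how builder interleaves his probes in a stage, a single global choice of $p_{i+1}$ satisfies every constraint simultaneously, which is what makes the simulation of the offline Ramsey coloring by the online coordinator go through.
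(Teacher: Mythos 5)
Your upper bound matches the paper's exactly: in each stage builder probes every earlier point, forcing all constructed points to be distinct, and then pigeonhole on the $(n-1)^q$ points of $[n-1]^q$ finishes the argument. Your lower bound, however, takes a genuinely different route. The paper's own proof is direct and self-contained: coordinator fixes a linear extension of the coordinate-wise partial order $\prec$ on $[n-1]^q$ and predetermines $p_i$ to be the $i$th smallest point in that extension; when builder probes some earlier $p_j$, the fact that $j<i$ in the extension means $p_i$ is not dominated by $p_j$, so a coordinate on which $p_i$ strictly exceeds $p_j$ always exists for coordinator to name. You instead invoke the Erd\H{o}s--Szekeres/Dilworth identity $N_2(q,n)=(n-1)^q+1$, fix a coloring of the ordered complete graph on $(n-1)^q$ vertices with no monochromatic monotone path of length $n$, and transport it into a coordinator strategy via the longest-monochromatic-path-length vector. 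That is precisely the painter-to-coordinator simulation the paper uses in the proof of Lemma~\ref{equivlattice} to show $L(q,n)\ge V_2(q,n)$, here specialized to a fixed offline coloring. Both arguments are correct, and both hinge on coordinator pre-committing to $p_{i+1}$ independently of which probes builder makes, the subtlety you rightly flag; the paper's version keeps the proof local to the lattice, while yours buys a cleaner conceptual link to the Ramsey-theoretic machinery developed elsewhere in the paper, at the cost of invoking the $N_2$ identity.
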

\begin{proof}
Indeed, in each stage, builder, by picking all previous points at steps, can ensure that the points in the constructed sequence are distinct. Hence, as there are only $(n-1)^q$ points in $\mathbb{Z}^q_{>0}$ with coordinates at most $n-1$, by the pigeonhole principle, the minimum number of stages required for builder is at most $(n-1)^q+1$. Picking a linear extension of the partial order $\prec$ on $[n-1]^q$ described above, coordinator can guarantee that $p_i$ is the $i$th smallest point in the linear extension. In this way, the minimum number of stages required for builder is at least $(n-1)^q+1$, which completes the proof.
\end{proof}

For a finite subset $S \subset \mathbb{Z}^q$ and a point $p \in \mathbb{Z}^q$, define the {\it position of $p$ with respect to $S$} to be the maximum $t$ such that for each $k \in [q]$, there are at least $t$ points $s \in S$ such that the $k$th coordinate of $p$ is at least the $k$th coordinate of $s$.

\begin{lemma}\label{median}
Let $S$ be a finite nonempty subset of $\mathbb{Z}^q$.
\begin{enumerate}
\item There is a point $p \in S$ whose position with respect to $S$ is at least $|S|/q$.
\item If $p \in \mathbb{Z}^q$ is such that there is no $s \in S$ satisfying $s\prec p$, then the position of $p$ with respect to $S$ is at most $\left(1-\frac{1}{q}\right)|S|$.
\end{enumerate}
\end{lemma}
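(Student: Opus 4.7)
The plan is to dispatch the two parts independently, each by a short pigeonhole plus a union bound over the $q$ coordinate directions.

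For part (1), for each $k \in [q]$ define the ``bad'' set $B_k \subseteq S$ to consist of those $s \in S$ with fewer than $|S|/q$ elements of $S$ having $k$th coordinate $\le s_k$. Ordering $S$ by the $k$th coordinate, $B_k$ is an initial segment, assembled out of the lowest ``levels'' in that coordinate whose cumulative size is still strictly below $|S|/q$; in particular $|B_k| < |S|/q$. A union bound then gives $\bigl|\bigcup_{k=1}^q B_k\bigr| < q \cdot |S|/q = |S|$, so we may pick $s^* \in S \setminus \bigcup_k B_k$. By construction, for every $k$ at least $|S|/q$ elements of $S$ have $k$th coordinate $\le s^*_k$, so the position of $s^*$ with respect to $S$ is at least $|S|/q$, as desired.

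For part (2), let $A_k = \{s \in S : s_k \le p_k\}$, so the position of $p$ with respect to $S$ is exactly $\min_k |A_k|$. Any $s \in \bigcap_k A_k$ with $s \ne p$ would satisfy $s \prec p$, which is forbidden by hypothesis; so in the natural case $p \notin S$ we get $\bigcap_k A_k = \emptyset$. Taking complements inside $S$ yields $\bigcup_{k=1}^q (S \setminus A_k) = S$, hence $\sum_{k=1}^q (|S| - |A_k|) \ge |S|$ and $\sum_{k=1}^q |A_k| \le (q-1)|S|$. Averaging, $\min_k |A_k| \le \tfrac{1}{q}\sum_k |A_k| \le (1 - 1/q)|S|$, which is the claimed bound.

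The only point that requires a bit of care is the tie-breaking in part (1): when many elements of $S$ share a common $k$th coordinate, one must observe that $B_k$ consists of whole coordinate-levels and hence cannot include any level whose top cumulative rank already reaches $|S|/q$, which is exactly what forces the strict inequality $|B_k| < |S|/q$ needed for the union bound. Once this is noted, each part reduces to a one-line averaging argument, and I do not anticipate any other obstacle.
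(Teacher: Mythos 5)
Your proof is correct and takes essentially the same approach as the paper: in both parts you run a union (or averaging) bound over the $q$ coordinate directions, which is exactly what the paper does (part (1) via counting elements of ``position less than $t$ in coordinate $k$,'' part (2) via deleting the elements beaten by $p$ in some coordinate). Your explicit caveat in part (2) that $p\notin S$ is well taken --- as stated, the bound can actually fail if $p\in S$ (e.g.\ $S=\{p\}$ has position $1>(1-1/q)$) --- but in the paper's sole application $p$ is never an element of $S$, and the paper's own proof likewise tacitly assumes this when it declares ``$p$ is at least as large as an element of $S$'' to be a contradiction.
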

\begin{proof}
We first show the first part. For each $t$, the number of elements of position less than $t$ is less than $tq$. Indeed, for each of the $q$ coordinates, less than $t$ elements of $S$ have position less than $t$ in that coordinate. All other elements have position at least $t$. Setting $t=|S|/q$, there is a point of $S$ of position at least $|S|/q$.

We now show the second part. Suppose the position of $p$ with respect to $S$ is $t$. Delete from $S$ all elements $s$ for which there is $k \in [q]$ such that the $k$th coordinate of $s$ is larger than the $k$th coordinate of $p$. Since $p$ has position $t$, the number of deleted elements is at most $q(|S|-t)$. If $|S|>q(|S|-t)$, then $p$ is at least as large as an element of $S$, a contradiction. Hence, $|S| \leq q(|S|-t)$, and we have $t \leq (1-\frac{1}{q})|S|$, which completes the proof.
\end{proof}

We now prove an upper bound on $L(q,n)$.

\begin{lemma} \label{uplattice}
Let $a(q,n)=1+(q-1)\frac{\log n}{\log (\frac{q}{q-1})}$ and $b(q,n)=(n-1)^q+1$. In the $(q,n)$-lattice game, builder has a winning strategy which uses the minimum possible number of stages, which is $b(q,n)$, such that each stage uses at most $a(q,n)$ steps. In particular, we have $$L(q,n) \leq b(q,n)a(q,n).$$
\end{lemma}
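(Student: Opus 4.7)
The plan is to exhibit a builder strategy that is simultaneously tight in both quantities: it uses exactly $b(q,n)=(n-1)^q+1$ stages (the minimum possible by Lemma~\ref{stepslg}), and within each stage it uses at most $a(q,n)$ steps. The total bound $L(q,n)\le a(q,n)b(q,n)$ then follows immediately.

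For a single stage $i+1$, the builder's objective will be to force $p_{i+1}\not\preceq p_j$ for every $j\le i$, where $p\preceq q$ means $p[k]\le q[k]$ for all $k$. The key reduction is that it suffices to ensure $p_{i+1}\not\preceq s$ for every $s$ in the set $M_0$ of maximal elements (with respect to $\prec$) of $\{p_1,\ldots,p_i\}$, since every $p_j$ is $\preceq$ some $s\in M_0$ by transitivity. Assuming coordinator has kept all previous points in $[n-1]^q$ (otherwise builder has already won), $M_0$ is an antichain in $[n-1]^q$; a standard projection onto the first coordinate and induction on $q$ gives $|M_0|\le (n-1)^{q-1}$.

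The intra-stage strategy is a $q$-ary search driven by Lemma~\ref{median}(1). Initialize $M:=M_0$; at each step choose $p\in M$ of position at least $|M|/q$ with respect to $M$, query $p$, and after coordinator's choice of coordinate $k$ delete from $M$ every $s$ with $s[k]\le p[k]$. The position property of $p$ guarantees that at least $|M|/q$ elements (including $p$ itself) are deleted, so $|M|$ shrinks by a factor of at most $1-1/q$ per step. Starting from $|M_0|\le (n-1)^{q-1}$, this makes $M=\emptyset$ after at most $T\le 1+(q-1)\log n/\log(q/(q-1))=a(q,n)$ steps. Unwinding the construction: for each $s\in M_0$, the step that deleted $s$ queried a point $p$ with $s[k]\le p[k]$, so $p_{i+1}[k]>p[k]\ge s[k]$ and hence $p_{i+1}\not\preceq s$. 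Therefore $p_{i+1}\not\preceq p_j$ for all $j\le i$, as desired.

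For the stage count I would use the potential $\Phi_i=|\{s\in[n-1]^q:s\preceq p_j\text{ for some }j\le i\}|$. If $p_{i+1}\in[n-1]^q$, then $p_{i+1}\preceq p_{i+1}$ and $p_{i+1}\not\preceq p_j$ for every $j\le i$, so $p_{i+1}$ is a new element of the set being counted and $\Phi_{i+1}>\Phi_i$. Since $\Phi_i\le (n-1)^q$, after at most $(n-1)^q+1=b(q,n)$ stages coordinator is forced to place some $p_j$ outside $[n-1]^q$, at which point a coordinate of value at least $n$ appears and builder wins.

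The main obstacle is matching the precise shape of $a(q,n)$: the $(q-1)$ in the exponent of $n$ (rather than $q$) comes from the antichain bound $(n-1)^{q-1}$ for $M_0$ rather than the trivial bound $(n-1)^q$ for the full history. The delicate point is recognizing that restricting the $q$-ary search to the (smaller) antichain of maximal elements is both small enough to fit in $a(q,n)$ steps and still sufficient, via the transitivity of $\preceq$, to rule out domination by \emph{every} previous point and not merely by the maximal ones.
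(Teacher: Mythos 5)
Your proposal is correct and follows essentially the same route as the paper: restrict attention to the antichain $M_0$ of maximal elements, bound $|M_0|\le(n-1)^{q-1}$, and run a $q$-ary search driven by Lemma~\ref{median}(1) to shrink $M_0$ geometrically, which gives the $a(q,n)$ step bound per stage. The only cosmetic divergence is in the stage count: the paper argues directly via pigeonhole on distinct points of $[n-1]^q$ (each stage produces a fresh point), whereas you track the size of the downward closure $\Phi_i$; both arguments yield $b(q,n)=(n-1)^q+1$ stages and neither is simpler than the other in any essential way.
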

\begin{proof}
By Lemma \ref{stepslg}, the minimum possible number of stages builder needs to win the $(q,n)$-lattice game is indeed $b(q,n)$.

As there are at most $(n-1)^q$ points in $\mathbb{Z}^q$ with positive coordinates at most $n-1$, it suffices for builder to guarantee at each stage $i$, using at most $a(q,n)$ steps, that coordinator picks a point $p_i$ not already in the sequence. To accomplish this, it suffices for builder to guarantee that for each point $p_j$ in the sequence with $j<i$, there is $k \in [q]$, such that the $k$th coordinate of $p_i$ must be greater than the $k$th coordinate of $p_j$.

Suppose we have finished $i-1$ stages, and we are now on stage $i$. Let $M_0$ denote the set of maximal elements in the already constructed sequence of $i-1$ points. We have $|M_0| \leq (n-1)^{q-1}$ as $M_0$ cannot contain two points with the same $q-1$ first coordinates.

In the first step of stage $i$, builder picks a point $p_{j_1} \in M_0$ of position in $M_0$ at least $|M_0|/q$. Such a point exists by the first part of Lemma \ref{median}. Coordinator then picks a coordinate $k_1$. By the definition of position, there are at least $|M_0|/q$ points in $M_0$ whose $k_1$th coordinate is at most the $k_1$th coordinate of $p_{j_1}$. Therefore, point $p_i$ must have $k_1$th coordinate larger than the $k_1$th coordinate of these at least $|M_0|/q$ points in $M_0$. Let $M_1$ denote those points in $M_0$ whose $k_1$th coordinate is larger than the $k_1$th coordinate of $p_{j_1}$, so $|M_1| \leq (1-1/q)|M_0|$.

After step $h$, we have a set $M_h$ with $|M_h| \leq (1-1/q)^h |M_0|$ such that for every point $p_j \in M_0 \setminus M_h$, there is $k \in [q]$ such that the $k$th coordinate of $p_i$ is guaranteed to be larger than the $k$th coordinate of $p_j$. If $M_h$ is nonempty, then builder moves on to step $h+1$ and picks a point $p_{j_{h+1}} \in M_h$ of position in $M_h$ at least $|M_h|/q$. Such a point exists by the first part of Lemma \ref{median}. Coordinator then picks a coordinate $k_{h+1}$. By the definition of position, there are at least $|M_h|/q$ points in $M_h$ whose $k_{h+1}$th coordinate is at most the $k_{h+1}$th coordinate of $p_{j_{h+1}}$. Therefore, point $p_i$ must have $k_{h+1}$th coordinate larger than the $k_{h+1}$th coordinate of these at least $|M_h|/q$ points in $M_h$. Let $M_{h+1}$ denote those points in $M_h$ whose $k_{h+1}$th coordinate is larger than the $k_{h+1}$th coordinate of $p_{j_{h+1}}$, so $|M_{h+1}| \leq (1-1/q)|M_h|$.

If $M_h$ is empty, then builder decides to end the stage and coordinator must pick a point $p_i$ satisfying the required conditions. Note that we eventually will have an empty $M_h$ as $M_0$ is a finite set, and $M_{h}$ is a proper subset of $M_{h-1}$.  Since in the previous step $M_{h-1}$ is nonempty, we have $$1 \leq |M_{h-1}| \leq (1-1/q)^{h-1}|M_0| \leq (1-1/q)^{h-1}(n-1)^{q-1},$$ and so the number $h$ of steps satisfies  $$h \leq 1+(q-1)\frac{\log n}{\log (\frac{q}{q-1})}=a(q,n).$$ Since $M_0$ are the maximal elements of the already constructed sequence, and for every point $p \in M_0$, there is $k \in [q]$ such that the $k$th coordinate of $p_i$ is greater than the $k$th coordinate of $p$, then $p_i$ cannot be an element of the already constructed sequence. This completes the proof.\end{proof}

We remark that one can do a little better in the above proof by improving the bound on $|M_0|$ by using a result of de Bruijn, van Ebbenhorst Tengbergen, and Kruyswijk \cite{BrEbKr} which extends Sperner's theorem. This result says that the set of elements in $[n-1]^q$ whose coordinates sum to $\lfloor nq/2 \rfloor$ forms a maximum antichain in the poset on $[n-1]^q$. Since $M_0$ is an antichain, the size of this set is an upper bound on $|M_0|$.

We next present a strategy for coordinator which gives a lower bound for $L(q,n)$.

\begin{lemma}\label{lowlattice}
For each fixed $q \geq 2$, we have $L(q,n) \geq \left(q-1-o(1)\right)n^q\log_q n$.
\end{lemma}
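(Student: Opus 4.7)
The plan is to construct a coordinator strategy that forces any builder to use at least $(q-1-o(1))\,n^q \log_q n$ total steps. The strategy rests on the rank antichain decomposition of $[n-1]^q$: set $L_r = \{a \in [n-1]^q : a_1 + \cdots + a_q = r+q\}$ for $r = 0, 1, \ldots, q(n-2)$. Each $L_r$ is an antichain in the coordinate-wise partial order, and $\sum_r |L_r| = (n-1)^q$. Coordinator will process these antichains in order of increasing rank.

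During the $r$-th \emph{epoch}, which spans $|L_r|$ consecutive stages, coordinator plans to pick each point of $L_r$ as a distinct $p_i$. Within the $j$-th stage of this epoch she maintains a candidate set $A \subseteq L_r$ of points not yet used in this epoch, initialized to $|A| = |L_r|-j+1$. When builder picks a previous point $p_j$ at a step, coordinator selects the coordinate $k \in [q]$ that maximizes $|A \cap \{a : a[k] > p_j[k]\}|$, and updates $A$ to this set. At the end of the stage coordinator chooses any $p_i \in A$.

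The crucial observation is that every previous point $p_j$ has rank at most $r$: either $p_j$ was chosen in a strictly earlier epoch (rank $<r$), or it was chosen earlier in the current epoch (rank exactly $r$, and therefore incomparable with all other elements of $L_r$). In either case no $a \in A$ satisfies $a \preceq p_j$, so for each $a \in A$ we have $a[k] > p_j[k]$ for at least one $k$; summing gives $\sum_k |A \cap \{a : a[k] > p_j[k]\}| \geq |A|$, and an averaging argument analogous to the first part of Lemma~\ref{median} yields $|A'| \geq |A|/q$. Consequently coordinator completes the $j$-th stage of the $r$-th epoch, keeping $p_i$ inside $L_r$, as long as builder uses at most $\lfloor \log_q(|L_r| - j + 1) \rfloor$ steps in that stage; if builder uses more, coordinator is pushed out of $L_r$ into a higher level, but an amortized accounting will absorb this. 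Summing over all stages and epochs, the total number of steps that builder needs to exhaust coordinator's plan is at least
$$\sum_{r=0}^{q(n-2)} \sum_{j=1}^{|L_r|} \log_q(|L_r| - j + 1) \;=\; \sum_{r=0}^{q(n-2)} \log_q(|L_r|!).$$

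By Stirling this equals $\sum_r |L_r| \log_q |L_r| - O(n^q)$, so it remains to estimate this sum. A standard local central limit theorem calculation shows that for fixed $q$ the antichain sizes $|L_r|$ satisfy $|L_r| = \Theta(n^{q-1})$ for all ranks $r$ in an $O(n)$-wide window around $q(n-1)/2$, and this middle window carries $(1-o(1))n^q$ of the total mass $(n-1)^q$. Hence $\log_q |L_r| = (q-1)\log_q n - O(1)$ on this window, giving $\sum_r |L_r| \log_q |L_r| \geq (q-1-o(1))\,n^q \log_q n$, as required. The main technical obstacle will be the amortized potential argument for the level-jump case: I expect to define $\Phi = \sum_r \log_q\bigl((|L_r| - u_r)!\bigr)$ where $u_r$ counts points of $L_r$ already chosen by coordinator, and to show that each builder step decreases $\Phi$ by $O(1)$, so that the total step count is at least the initial value of $\Phi$, recovering the same bound.
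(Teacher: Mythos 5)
Your framework—processing $[n-1]^q$ level by level, using the second part of Lemma~\ref{median} to extract a coordinate that keeps at least a $1/q$ fraction of the remaining candidate points, and then summing $\log_q(|L_r|!)$ over all levels—is exactly the paper's framework, and your closing estimate ($\sum_r \log_q(|L_r|!)\approx\sum_r|L_r|\log_q|L_r|$, concentrated on the $\Theta(n)$ middle levels of size $\Theta(n^{q-1})$) is a legitimate alternative to the paper's Jensen/integral computation. However, there is a genuine gap in the heart of the argument, and it is in a different place than where you have flagged it.

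You write that ``coordinator completes the $j$-th stage \dots\ as long as builder uses at most $\lfloor \log_q(|L_r|-j+1)\rfloor$ steps; if builder uses more, coordinator is pushed out of $L_r$ into a higher level.'' This is backwards. The set $A$ can never become empty: by the argument you give, every step preserves $|A|\ge 1$ (the lower bound $|A'|\ge|A|/q$ forces $|A'|\ge 1$ since $|A'|$ is a nonnegative integer and $|A|\ge 1$), so builder can never push coordinator out of $L_r$. The dangerous case for coordinator is the opposite one: builder ends the stage \emph{early}, while $|A|>1$. Under your prescription (``at the end of the stage coordinator chooses any $p_i\in A$''), builder can simply play a single step per stage, accept that $A$ shrinks by a factor $q$, and still watch coordinator hand over a fresh point of $L_r$. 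This strategy fills an entire level $L_r$ in roughly $|L_r|$ steps rather than $\log_q(|L_r|!)$ steps, and the whole game ends after about $(n-1)^q$ steps, far below your target. Your proposed potential function $\Phi=\sum_r\log_q\bigl((|L_r|-u_r)!\bigr)$ does not repair this: a single-step stage that forces $u_r\to u_r+1$ drops $\Phi$ by $\log_q(|L_r|-u_r)$, which is $\Theta((q-1)\log_q n)$ on the middle levels, not $O(1)$.

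The missing idea, which is the crux of the paper's proof, is what coordinator should do when builder stops a stage with $|A|>1$: she should \emph{not} pick a new point from $A$. Instead she takes two distinct points $x,y\in A$ and returns $z$ with $z_k=\min(x_k,y_k)$. This $z$ satisfies every lower-bound constraint accumulated during the stage (because $x$ and $y$ do), it lies strictly below level $r$ (since $x\ne y$ are both on level $r$), and therefore it is already in the sequence—so the stage adds \emph{no} new point and all of builder's steps in that stage are wasted. Only when $|A|=1$ is coordinator forced to reveal a new point, and driving $|A|$ from $|S_0|$ down to $1$ costs builder at least $\log_q|S_0|$ steps. That is what justifies charging $\log_q m$ steps for the stage in which the $m$-th-to-last point of a level is added, and hence the sum $\sum_r\sum_{m=1}^{l_r}\log_q m$. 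Without this ``replay an old point'' move, no amortization can rescue the bound, because builder genuinely has a cheap winning strategy against a coordinator who always advances.
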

\begin{proof}
We present a strategy for coordinator which satisfies that the number of steps in the stage the $i$th point not already in the sequence is added is at least a certain number depending on $i$ which will be defined later in the proof. Level $r$ of the grid $[n-1]^q$ consists of all points in this grid whose coordinates sum to $r$.
The new points will be added in levels, so no points of larger level are added until all points of lower level are already in the sequence.

At stage $i$, for some $r$, all points in levels $j$ with $j<r$ are already in the sequence, and not all of level $r$ is in the sequence. Let $S_0$ denote the set of points in level $r$ not already in the sequence. As $S_0$ is disjoint from the already constructed sequence, and no element in the already constructed sequence is larger than any point in $S_0$, then no matter what point $p_{j_1}$ in the already constructed sequence builder chooses for the first step to compare $p_i$ with, by the second part of Lemma \ref{median}, the position of $p_{j_1}$ with respect to $S_0$ is at most $(1-\frac{1}{q})|S_0|$. Hence, there is $k \in [q]$ such that at least $\frac{1}{q}|S_0|$ points in $S_0$ have $k$th coordinate more than the $k$th coordinate of $p_{j_1}$. Coordinator picks this coordinate $k$ to compare with, and lets $S_1$ denote the points in $S_0$ whose $k$th coordinate is larger than the $k$th coordinate of $p_{j_1}$, so $|S_1| \geq |S_0|/q$.

After $h$ steps of stage $i$, we have a subset $S_h$ of level $r$ with $|S_h| \geq |S_0|/q^h$ and every point in $S_h$ satisfies the conditions that must be satisfied by $p_i$. Notice this is satisfied for $h=1$. Builder has two options, either continue on to step $h+1$, or declare the turn over.

In the first case, builder picks a point $p_{j_{h+1}}$ in the already constructed sequence to compare $p_i$ with. By the second part of Lemma \ref{median}, the position of $p_{j_{h+1}}$ with respect to $S_h$ is at most $(1-\frac{1}{q})|S_h|$. Hence, there is $k \in [q]$ such that at least $\frac{1}{q}|S_h|$ points in $S_h$ have $k$th coordinate more than the $k$th coordinate of $p_{j_{h+1}}$. Coordinator picks this coordinate $k$ to compare with, and lets $S_{h+1}$ denote the points in $S_h$ whose $k$th coordinate is larger than the $k$th coordinate of $p_{j_{h+1}}$, so $|S_{h+1}| \geq |S_h|/q$. This finishes step $h+1$.

In the second case, builder declares the turn is over after the $h$ steps. There are two possibilities: $|S_h|=1$ or $|S_h| > 1$. If $|S_h|=1$, coordinator picks $p_i$ to be the point in $S_h$. The number $h$ of steps in this case satisfies $1 = |S_h| \geq |S_0|/q^h$, so $h \geq \log_q |S_0|$. Otherwise, $|S_h|>1$, and we claim that there is still a point $z \in [n-1]^q$ in a level less than $r$ which can be chosen for $p_i$. Indeed, if $x=(x_1,\ldots,x_q)$ and $y=(y_1,\ldots,y_q)$ are distinct points in $S_h$, then the point $z=(z_1,\ldots,z_q)$ with $z_j=\min(x_j,y_j)$ satisfies the desired properties. Indeed, as $x$ and $y$ are in level $r$, the level of $z$ is $\sum_{i=1}^q \min(x_i,y_i) < \sum_{i=1}^q x_i=r$. Further, the set of possibilities for $p_i$ is of the form that the $k$th coordinate is at least some $m_k$ for each $k$, and, as $x$ and $y$ are in this set, then $z$ is also in this set. The point $z$ is already in the sequence, and coordinator lets $p_i$ be this point. Since $p_i$ is already in the sequence, after stage $i$ builder is exactly in the same situation as before stage $i$ in regards to winning the $(q,n)$-lattice game, and so the steps in this stage did not help get closer to ending the game.

In each stage for which a new point appears in the sequence at least $\log_q |S_0|$ steps were used, where $|S_0|$ is the number of points not yet in the sequence in the level currently being used to fill the new points. Let $l_r$ denote the number of points in level $r$. We have $l_r=1$ if and only if
$r=q$ or $q(n-1)$. We thus have
\begin{eqnarray*}L(q,n) & \geq & \sum_{r=q}^{q(n-1)}\sum_{m=1}^{l_r}\log_q m = \sum_{r=q+1}^{q(n-1)-1}\sum_{m=1}^{l_r}\log_q m \geq \sum_{r=q+1}^{q(n-1)-1}\int_{1}^{l_r-1} \log_q m \, dm \\ & = & \sum_{r=q+1}^{q(n-1)-1}\left(m\log_q m - m/\ln q\right) \Big |_{m=1}^{l_r-1}=  \sum_{r=q+1}^{q(n-1)-1}(l_r-1)\log_q (l_r-1)-(l_r-2)/\ln q \\ & \geq & \left((q(n-2)-1)\ell \log \ell -\left((n-1)^{q}-2-2(q(n-2)-1\right)/\ln q\right)  ,\end{eqnarray*}
where in the second inequality we bounded a sum from below by an integral using the fact $f(x)=\log x$ is an increasing function, and the last inequality follows from Jensen's inequality and convexity of the function $f(x)=x\log x$, where we substitute $\sum_{r=q+1}^{q(n-1)-1}\ell_r =(n-1)^q-2$, and  $\ell=\left((n-1)^q-1-q(n-2)\right)/\left(q(n-2)-1\right)$ is the average of $\ell_r-1$ with $r$ ranging from $q+1$ to $q(n-1)-1$.
For $q$ fixed, this bound can be simplified to
$$L(q,n) \geq (1-o(1))n^{q}\log_q n^{q-1}=
\left(q-1-o(1)\right)n^q\log_q n,$$
which completes the proof.
\end{proof}

\subsection{Alternative upper bound on $N_k(q,n)$}

In this subsection, we present the proof of Theorem \ref{alternativeupperbound}, which gives an upper bound on $N_k(q,n)$ in terms of the online Ramsey number $V_{k-1}(q,n+k-2)$.

Define $V'_k(q,n)$ exactly as $V_k(q,n)$, except in each stage $t \geq k$, builder must add at least one edge (whose largest vertex is $v_t$). We refer to the online Ramsey game in which builder must add at least one edge in each stage $t \geq k$ as the {\it modified online Ramsey game}. We have \begin{equation}\label{modifiedonline}
V_k(q,n) \leq V'_k(q,n) \leq V_k(q,n+k-1),\end{equation} the lower bound being trivial. Note that in the usual online Ramsey graph,  those vertices $v_t$ which are not the largest vertex in an edge can only be among the first $k-1$ vertices of a monotone path. Hence, after a monochromatic monotone path of length $n+k-1$ is created, the last $n$ vertices of the path each are the largest vertex of some edge, which gives the upper bound. We will use this upper bound in the proof below.

\medskip

\noindent\textbf{Proof of Theorem \ref{alternativeupperbound}.}  Let $\chi: {[N]\choose k}\rightarrow[q]$ be a $q$-coloring of the edges of $K^k_N$, where $N = q^{V_{k-1}(q,n+k-2)} + k-2$.  We have to show that $K^k_N$ contains a monochromatic monotone path of length $n$.

We construct a set of vertices $\{v_{1},v_{2},...,v_{t}\}$ and a $(k-1)$-uniform hypergraph $H$ on these vertices with at most $V_{k-1}(q,n+k-2)$ edges such that for any $(k-1)$-edge $e = \{v_{i_1},...,v_{i_{k-1}}\}$, $i_1 < \cdots <i_{k-1}$, the color of any $k$-edge $\{v_{i_1},...,v_{i_{k-1}},v_{i_k}\}$ in $K^k_N$ with $i_k > i_{k-1}$ is the same, say $\chi'(e)$.  Moreover, this $(k-1)$-uniform hypergraph will contain a monochromatic path of length $n$, which one can easily see will define a monochromatic path of length $n$ in $K^k_N$. As described below, the way the vertices of the set are constructed will be determined by playing the modified online Ramsey game.

We begin the construction of this set of vertices by setting $v_1 = 1$ and setting $S_{1} = [N]\setminus \{v_1\}$. After stage $t$ of the process, we have a set of vertices $\{v_1,...,v_t\}$ with $v_1<\cdots<v_t$ and a set $S_t$ such that $w>v_t$ for all $w \in S_t$ and
for each $(k-1)$-edge $\{v_{i_1},...,v_{i_{k-1}}\}$ in $H$ with $i_1<\cdots<i_{k-1} \leq t$, the color of the $k$-edge $\{v_{i_1},...,v_{i_{k-1}}, w\}$ in $K^k_N$ is the same for every $w$ in $S_t$ and for every $w=v_j$ with $i_{k-1}<j \leq t$.

In the beginning of stage $t+1$, we let $v_{t+1}$ be the smallest element in $S_t$. We play the modified online Ramsey game, so that builder chooses the edges, which are $(k-1)$-tuples $\{v_{i_1},\ldots,v_{i_{k-2}},v_{t+1}\}$ with $i_1<\cdots<i_{k-2}<t+1$, to be drawn according to his strategy.  Painter then colors these edges.  For the first edge $e_1$ chosen, painter looks at all $k$-tuples containing this edge and a vertex from $S_t\setminus \{v_{t+1}\}$.  The $(k-1)$-edge is colored $r_1 \in [q]$ in $\chi'$ if there are at least $(|S_{t}| - 1)/q$ such $k$-tuples that have color $r_1$, breaking ties between colors arbitrarily. Such a color $r_1$ exists by the pigeonhole principle. This defines a new subset $S_{t,1}$, which are all vertices in $S_{t}\setminus \{v_{t+1}\}$ such that together with edge $e_1$ form a $k$-tuple of color $r_1 = \chi'(e_1)$. After $j$ edges have been drawn in this stage, we have a subset $S_{t,j}$, and we color the next drawn edge $e_{j+1}$ color $r_{j+1}$ if the number of $k$-tuples of color $r_{j+1}$ containing it and a vertex from $S_{t,j}$ is at least $|S_{t,j}|/q$, breaking ties between colors arbitrarily. Then we define $S_{t,j+1}$ to be the set of at least $|S_{t,j}|/q$ vertices in $S_{t,j}$ which together with $e_{j+1}$ form an edge of color $r_{j+1}$.

After builder has added all $(k-1)$-edges with largest vertex $v_{t+1}$, the remaining set will be $S_{t+1}$.  Let $m_t$ be the number of $(k-1)$-edges $e = \{v_{i_1},...,v_{i_{k-2}}, v_t\}$ with $i_1<\cdots<i_{k-2} < t$ in $H$. Since we are playing the modified online Ramsey game, we have $m_t=0$ for $1 \leq t \leq k-2$ and $m_t \geq 1$ for $t \geq k-1$.

We now show by induction that

$$|S_t| \geq \frac{N - (k-2)}{q^{\sum_{i = k-1}^t m_i}} - \sum\limits_{i = k-1}^{t} \frac{1}{q^{\sum_{j=i}^t m_j}}.$$

\noindent For the base case $t = k-2$, we have

$$|S_{k-2}| = N - (k-2).$$

Suppose we have proved the desired inequality for $t$.  When we draw a vertex $v_{t+1}$, the size of our set decreases by 1.  Each time we draw an edge from $v_{t+1}$, the size of our set $S$ goes down by a factor of $q$.  Therefore

\begin{eqnarray*}|S_{t+1}| & \geq & \frac{|S_t|-1}{q^{m_{t+1}}} \geq  \frac{N - (k-2)}{q^{\sum_{i = k-1}^{t+1} m_i}} - \left(\sum\limits_{i = k-1}^{t} \frac{1}{q^{\sum_{j=i}^{t+1} m_j}}\right)  - \frac{1}{q^{m_{t+1}}} \\
      & = & \frac{N - (k-2)}{q^{\sum_{i = k-1}^{t+1} m_i}} - \sum\limits_{i = k-1}^{t+1} \frac{1}{q^{\sum_{j=i}^{t+1} m_j}}.\end{eqnarray*}

The number of edges drawn in the $(k-1)$-uniform hypergraph after $t$ stages is $\sum_{i=k-1}^t m_i$. If this is at most $V'_{k-1}(q,n) \leq V_{k-1}(q,n+k-2)$,
as $q \geq 2$ and $m_i \geq 1$ for all $i \geq k-1$, we have

\begin{eqnarray*} |S_{t}| & \geq & \frac{N - (k-2)}{q^{\sum_{i = k-1}^t m_i}} - \sum\limits_{i = k-1}^{t} \frac{1}{q^{\sum_{j=i}^t m_j}} \geq \frac{N - (k-2)}{q^{V'_{k-1}(q,n)}} - \sum\limits_{i = k-1}^{t} \frac{1}{q^{t-i + 1}} \\      & > &\frac{N - (k-2)}{q^{V'_{k-1}(q,n)}} - 1 = 0.\end{eqnarray*}

Since $|S_t|$ is an integer, $|S_t| \geq 1$ and we can continue to the next stage. Thus, at the time we stop, $\sum_{i=1}^t m_i > V'_{k-1}(q,n)$ and we have constructed a $(k-1)$-uniform hypergraph $H$ that contains a monochromatic monotone path in coloring $\chi'$ on vertices $v_{i_1},\ldots,v_{i_n}$.   On the other hand, it follows from the properties of our construction that the vertices $v_{i_1},\ldots,v_{i_n}$ form a monochromatic monotone path of length $n$ in $K^k_N$ as required.
\qed

\medskip

\subsection{Size Ramsey numbers}

We close this section by proving a lower bound on the size Ramsey number $S_2(q,n)$. Recall that $\chi(q,n)$ denotes the minimum chromatic number of an ordered graph $G$ which is $(q,n)$-path Ramsey. A graph is {\it $t$-degenerate} if every subgraph of it has a vertex of degree at most $t$. Every $t$-degenerate graph has an ordering of its vertices such that each vertex is adjacent to at most $t$ earlier vertices. Indeed, let a vertex of degree at most $t$ be the last vertex in the ordering, and continue picking vertices out of degree at most $t$ from the remaining induced subgraph, adding them to the end of the ordering, until all vertices are picked out. From  this ordering, we get that every $t$-degenerate graph has chromatic number at most $t+1$ as we can color the vertices in order, picking the color of a vertex distinct the colors of its at most $t$ neighbors that come before it in the ordering.

\smallskip

\begin{theorem}
Let $n_1,n_2,q \geq 2$ be integers, and let $n=n_1+n_2-1$. We have $$S_2(q,n) \geq (\chi(q,n_1)-1)N_2(q,n_2)/2.$$
\end{theorem}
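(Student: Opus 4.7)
I would prove the contrapositive: assuming $|E(G)| < (\chi(q,n_1)-1)N_2(q,n_2)/2$, I will construct a $q$-edge-coloring of $G$ with no monochromatic monotone path of length $n$, contradicting that $G$ is $(q,n)$-path Ramsey. Setting $t = \chi(q,n_1)-1$, the identity $2|E(G)| = \sum_v \deg_G(v)$ forces the set $V_H := \{v \in V(G) : \deg_G(v) \geq t\}$ to satisfy $|V_H| < N_2(q,n_2)$, while every vertex of $V_L := V(G) \setminus V_H$ has degree at most $t-1$ in $G$.

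The plan is to combine two ``bad'' colorings tailored to $V_H$ and to the edges incident to $V_L$. Because $|V_H| < N_2(q,n_2)$, the definition of $N_2$ supplies a $q$-coloring $\phi_H$ of the complete graph on $V_H$ with no monochromatic monotone path of length $n_2$; use $\phi_H$ on $E(G[V_H])$. Next, let $G'$ denote the subgraph of $G$ spanned by the edges having at least one endpoint in $V_L$. Iteratively deleting vertices of $V_L$ (each of degree at most $t-1$ in the current subgraph of $G'$) eventually leaves only isolated vertices of $V_H$, showing that $G'$ is $(t-1)$-degenerate and hence $\chi(G') \leq t$. Since $t < \chi(q,n_1)$ and $\chi(q,n_1)$ is by Theorem~\ref{chif} the minimum chromatic number of a $(q,n_1)$-path Ramsey ordered graph, $G'$ is not $(q,n_1)$-path Ramsey, so it admits a $q$-coloring $\phi'$ with no monochromatic monotone path of length $n_1$; use $\phi'$ on the edges of $G'$.

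It then remains to check that the combined coloring has no monochromatic monotone path of length $n$. Given a putative color-$c$ monotone path $P$ on $n$ vertices, decompose its edges into maximal H-runs (inside $G[V_H]$) and R-runs (inside $G'$): each H-run is a $\phi_H$-monochromatic subpath with at most $n_2-1$ vertices, and each R-run is a $\phi'$-monochromatic subpath with at most $n_1-1$ vertices. The case in which $P$ consists of just one H-run and one R-run immediately gives at most $(n_2-1)+(n_1-1)-1 = n-2 < n$ vertices after accounting for the shared boundary, a contradiction.

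The main obstacle is ruling out paths $P$ that alternate three or more times between H- and R-runs, where the naive vertex count is insufficient. I would close this gap by upgrading $\phi'$ to carry extra structure: for instance, take a proper $t$-coloring $c_V$ of $G'$ arranged to be essentially constant on $V_H$, and set $\phi'(u,v) := \phi_0(c_V(u),c_V(v))$ for $(u,v) \in E(G')$, where $\phi_0$ is the $q$-coloring of the complete digraph $D_t$ from the proof of Theorem~\ref{chif} with no monochromatic walk of length $n_1$. Then the concatenation of the $c_V$-images of consecutive R-runs of $P$ becomes a single monochromatic walk of color $c$ in $\phi_0$, and comparing the length of this walk against the combined length of the intervening H-runs (bounded via $\phi_H$) should force $|V(P)| \leq n-1$, completing the contradiction.
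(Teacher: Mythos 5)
Your setup is sound: the degree-sum inequality does force $|V_H| < N_2(q,n_2)$, $G'$ really is $(t-1)$-degenerate, and hence the bad colorings $\phi_H$ on $G[V_H]$ and $\phi'$ on $G'$ both exist. (The paper works with the $t$-core of $G$ and its complement rather than with $\{v : \deg_G(v)\ge t\}$ and its complement, but both decompositions deliver the same two properties, so this difference is harmless.) The genuine gap is exactly where you flag it, and your proposed fix does not close it. Concatenating the R-runs does give a single $\phi_0$-monochromatic walk of length at most $n_1-1$, but each H-run is still only bounded by $n_2-1$ individually, and $\phi_H$ is an arbitrary bad coloring on $V_H$ with no walk-like structure that would let those bounds aggregate. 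Concretely, take a path of the shape (H-run, R-run, H-run): if the R-run has $r$ vertices and the H-runs have $s_1,s_2$ vertices, then $|V(P)| = s_1 + r + s_2 - 2$, which under your bounds can be as large as $(n_2-1)+(n_1-1)+(n_2-1)-2 = n_1+2n_2-5$. This exceeds $n-1 = n_1+n_2-2$ whenever $n_2\ge 4$, so your coloring does not rule out a monochromatic monotone path of length $n$.

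The ingredient you are missing is the paper's treatment of cross edges. Rather than folding the edges between the two parts into one of the two bad colorings, the paper assigns cross edges a color that depends only on direction: if $i$ lies in the degenerate part and $j$ lies in the core, the edge gets color $1$ when $i<j$ and color $2$ when $i>j$. A monochromatic monotone path that starts in one part, passes into the other, and later returns would necessarily contain both a color-$1$ and a color-$2$ cross edge, which is impossible. This caps the number of crossings at one, so the path decomposes into at most one block inside each part, giving $|V(P)| \le (n_1-1)+(n_2-1) = n-1$ immediately — no bookkeeping over alternating runs is needed. That directional coloring of cross edges is what your argument would need; without it (or an equally strong replacement), the alternation case remains unresolved.
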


\medskip
\noindent{\bf Proof.}
Let $G$ be an ordered graph such that for every $q$-edge-coloring of $G$ there is a monochromatic monotone path of length $n$. Suppose for contradiction that $G$ has fewer than $(\chi(q,n_1)-1)N_2(q,n_2)/2$ edges. Partition $V(G)=V_1 \cup V_2$, where $V_2$ is the $t$-core of $G$ with $t=\chi(q,n_1)-1$, which is formed by deleting, one-by-one, vertices from $G$ of smallest degree until the remaining induced subgraph has minimum degree at least $t$. Since $G$ has fewer than
$(\chi(q_1,n)-1)N_2(q,n_2)/2$ edges, and the minimum degree in the subgraph of $G$ induced by the vertex set $V_2$ is at least $t$, we have the inequality $|V_2|<N_2(q,n_2)$, and hence there is a $q$-coloring with color set $[q]$ of the edges inside $V_2$ with no monochromatic monotone path of length $n_2$. The subgraph of $G$ induced by the vertices in $V_1$ is $(t-1)$-degenerate, so it can be properly colored with $t=\chi(q,n_1)-1$ colors. Hence, by definition of $\chi(q,n_1)$, there is a $q$-edge-coloring with color set $[q]$ of the edges inside $V_1$ without a monochromatic monotone path of length $n_1$. We still have to color the edges between $V_1$ and $V_2$. For any two vertices $i \in V_1$ and $j \in V_2$, if $i<j$, color the edge $(i,j)$ color $1$, and if $i>j$, color the edge $(i,j)$ color $2$. For each $j \in \{1,2\}$, a monochromatic monotone path cannot start with a vertex in $V_j$, later have a vertex in $V_{3-j}$, and then, even later have a vertex in $V_j$, as otherwise it would contain an edge of color $1$ and an edge of color $2$. Therefore, the length of the longest monochromatic monotone path is at most the sum of the lengths of the longest monochromatic monotone paths within $V_1$ and within $V_2$, which is at most $(n_1-1)+(n_2-1)=n-1$, contradicting our assumption that every $q$-edge-coloring of $G$ has a monochromatic monotone path of length $n$. The proof is complete.
\qed

\medskip

Taking $n_1=n_2=n/2$ in the previous theorem, and using $N_2(q,n)=(n-1)^q+1$ and Theorems \ref{ftheorem} and \ref{chif}, we have the following corollary.

\begin{corollary}
For any integers $n, q\ge 2$, we have $S_2(q,n) \geq c_qn^{2q-1}$ for some constant $c_q>0$ only depending on $q$.
\end{corollary}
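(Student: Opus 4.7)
The plan is to apply the preceding theorem with $n_1$ and $n_2$ each approximately $n/2$. Concretely, choose $n_1 = \lceil n/2 \rceil$ and $n_2 = n + 1 - n_1 = \lfloor n/2 \rfloor + 1$, so that $n_1 + n_2 - 1 = n$ and both $n_1, n_2 \geq n/2$. This lets us invoke the preceding theorem to get $S_2(q,n) \geq (\chi(q,n_1)-1) N_2(q,n_2)/2$.

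Next, I would lower bound each of the two factors using results already in the paper. By Theorem \ref{chif}, $\chi(q,n_1) = f(q,n_1)$, and by Theorem \ref{ftheorem}, $f(q,n_1) \geq (n_1/q)^{q-1} \geq (n/(2q))^{q-1}$. For the second factor, equation (\ref{dil}) gives $N_2(q,n_2) = (n_2-1)^q + 1 \geq (n_2-1)^q$, and since $n_2 - 1 \geq \lfloor n/2 \rfloor$, this is at least $(n/4)^q$ for $n$ large enough. Multiplying the two bounds and dividing by $2$ yields
$$S_2(q,n) \;\geq\; \frac{1}{2}\bigl((n/(2q))^{q-1} - 1\bigr)(n/4)^q \;\geq\; c_q n^{2q-1}$$
for a suitable constant $c_q > 0$ depending only on $q$, which is the desired bound.

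There is essentially no obstacle here: the corollary follows in a few lines from the stated theorem together with the two identities $\chi(q,\cdot) = f(q,\cdot)$ and $N_2(q,\cdot) = (\cdot - 1)^q + 1$. The only mild bookkeeping is making the parities work out so that $n_1 + n_2 = n+1$ exactly, and handling the finitely many small values of $n$ for which the leading-order bounds are not yet dominant; these can be absorbed into the constant $c_q$.
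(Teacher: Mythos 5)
Your proof is correct and follows essentially the same route as the paper: apply the preceding theorem with $n_1, n_2 \approx n/2$, then invoke $\chi(q,\cdot) = f(q,\cdot)$ (Theorem \ref{chif}), the lower bound $f(q,\cdot) \geq (\cdot/q)^{q-1}$ (Theorem \ref{ftheorem}), and $N_2(q,\cdot) = (\cdot - 1)^q + 1$. You are only a bit more careful than the paper about making $n_1 + n_2 - 1 = n$ hold exactly and about absorbing small $n$ into $c_q$, but the substance is identical.
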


This should be compared with the trivial upper bound $$S_2(q,n) \leq {N_2(q,n) \choose 2} = {(n-1)^q+1 \choose 2} \leq n^{2q}.$$
Thus, for fixed $q$, the lower and upper bound on $S_2(q,n)$ are roughly a factor $n$ apart.

\section{Transitive colorings}\label{transitivesect}
In this short section we prove a simple lemma which is used in the proof of Theorem \ref{convex}. A {\em family} $\mathcal{F}$ of $k$-element subsets of $[N]$ (i.e., a $k$-uniform hypergraph on the vertex set $[N]$) is said to be {\em transitive} if for any $i_1<i_2<\ldots<i_{k+1}$ such that
$$\{i_1,i_2,\ldots, i_{k}\}, \{i_2, i_3,\ldots, i_{k + 1}\}\in \mathcal{F},$$
we can conclude that {\em all} $k$-element subsets of $\{i_1, i_2,\ldots, i_{k+1}\}$ belong to $\mathcal{F}$. A {\em $q$-coloring} of all $k$-element subsets of $[N]$ is called {\em transitive} if each of its color classes is transitive.

For the geometric application Theorem \ref{convex}, we need the following lemma relating monochromatic cliques in transitive colorings to monochromatic monotone paths in general colorings.

\begin{lemma}
\label{clique}
Let $N=N_k(q,n)$ be the smallest integer such that for every $q$-coloring of all hyperedges of $K^k_N$, the complete $k$-uniform hypergraph on the vertex set $[N]$, there exists a monochromatic monotone path of length $n$. Then, for every transitive $q$-coloring of all hyperedges of $K^k_N$, there exists a monochromatic complete subhypergraph of size $n$.
\end{lemma}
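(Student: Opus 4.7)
The plan is to invoke the definition of $N_k(q,n)$ to extract, from the given transitive $q$-coloring $\chi$, a monochromatic monotone path of length $n$ on some vertices $i_1 < i_2 < \cdots < i_n$ in some color $c$, and then show that transitivity forces \emph{every} $k$-subset of $\{i_1,\ldots,i_n\}$ to have color $c$. The combinatorial engine is the transitivity hypothesis: whenever two ``shifted'' $k$-subsets $\{\ell_1,\ldots,\ell_k\}$ and $\{\ell_2,\ldots,\ell_{k+1}\}$ are both color $c$, all $k$-subsets of the $(k+1)$-set $\{\ell_1,\ldots,\ell_{k+1}\}$ are color $c$.

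I would prove the strengthening by induction on the ``window width'' $m \geq 0$: for every $j$ with $j+k-1+m \leq n$, every $k$-subset of the window $W_{j,m} := \{i_j, i_{j+1}, \ldots, i_{j+k-1+m}\}$ has color $c$. The base case $m=0$ is immediate, since $W_{j,0}$ has exactly one $k$-subset, namely $\{i_j,\ldots,i_{j+k-1}\}$, which is an edge of the monochromatic monotone path.

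For the inductive step, fix $j$ with $j+k+m \leq n$, and assume the statement at level $m$. Then every $k$-subset of $W_{j,m}$ and every $k$-subset of $W_{j+1,m}$ already has color $c$. The only $k$-subsets of $W_{j,m+1} = W_{j,m} \cup W_{j+1,m}$ not yet accounted for are those $T$ that contain both endpoints $i_j$ and $i_{j+k+m}$. For any such $T$, write $T = \{i_j, i_{\ell_2}, \ldots, i_{\ell_{k-1}}, i_{j+k+m}\}$. Then $T \setminus \{i_{j+k+m}\} \subseteq W_{j,m}$ and $T \setminus \{i_j\} \subseteq W_{j+1,m}$, so both have color $c$ by the inductive hypothesis. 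These two $k$-subsets fit the pattern $\{\ell_1,\ldots,\ell_k\}, \{\ell_2,\ldots,\ell_{k+1}\}$ inside the $(k+1)$-set $\{i_j, i_{\ell_2}, \ldots, i_{\ell_{k-1}}, i_{j+k+m}\}$, so transitivity gives that $T$ itself has color $c$. Setting $m = n-k$ and $j = 1$ then yields that all $k$-subsets of $\{i_1,\ldots,i_n\}$ are color $c$, providing the desired monochromatic complete subhypergraph.

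I do not foresee a serious obstacle; the only thing one has to be careful about is that transitivity as stated applies to a pair of \emph{consecutively-shifted} $k$-subsets rather than arbitrary pairs, which is exactly why the induction is organized around the two sub-windows $W_{j,m}$ and $W_{j+1,m}$ that differ by a single shift — a forbidden subset $T$ spanning both endpoints is precisely the configuration to which the shift-transitivity can be applied.
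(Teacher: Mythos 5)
Your overall strategy is sound and close in spirit to the paper's: the paper proves an auxiliary lemma (any transitive ordered $k$-uniform hypergraph on $[n]$ that contains all the path edges $\{i,\ldots,i+k-1\}$, $1\le i\le n-k+1$, must be complete) by induction on $n$, deleting one vertex at each step and reapplying the inductive hypothesis, whereas you induct on the window width. Both are legitimate reorganizations of the same build-up from the path edges, with transitivity doing the real work.

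However, the inductive step as written contains an off-by-one that makes it incorrect. You set $T = \{i_j, i_{\ell_2},\ldots, i_{\ell_{k-1}}, i_{j+k+m}\}$, which has $k$ elements, and then assert that $T \setminus \{i_{j+k+m}\}$ and $T \setminus \{i_j\}$ ``have color $c$'' — but these are $(k-1)$-element sets, so the assertion is meaningless, and $T$ itself is not a $(k+1)$-set to which the transitivity hypothesis can be applied (despite being called a ``$(k+1)$-set'' in the final sentence). To repair this, first enlarge $T$ to a $(k+1)$-element subset $U \subseteq W_{j,m+1}$, which is possible since $|W_{j,m+1}| = k+m+1 > k$. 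Writing $U = \{i_{u_1} < \cdots < i_{u_{k+1}}\}$ with $u_1 = j$ and $u_{k+1} = j+k+m$, the two consecutively shifted $k$-subsets $\{i_{u_1},\ldots,i_{u_k}\}$ and $\{i_{u_2},\ldots,i_{u_{k+1}}\}$ lie inside $W_{j,m}$ and $W_{j+1,m}$ respectively, so both have color $c$ by the inductive hypothesis; transitivity then forces every $k$-subset of $U$, in particular $T$, to have color $c$. With this correction the argument is complete.
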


Lemma \ref{clique} directly follows from the definition of $N_k(q,n)$  and the following statement.

\begin{lemma}\label{complete}
Let $n > k$, and let $H$ be a $k$-uniform ordered hypergraph on the vertex set $[n]$, which contains a monotone path of length $n$, that is, $\{i,i+1,\ldots,i+k-1\}\in E(H)$ for all $1\le i\le n-k+1$.

If $E(H)$ is transitive, then $H$ is the complete $k$-uniform hypergraph on $[n]$.
\end{lemma}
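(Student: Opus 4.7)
The plan is to induct on the length of a consecutive interval in $[n]$. For each integer $j$ with $k \le j \le n$, let $P(j)$ denote the assertion that every $k$-element subset of every block $\{a, a+1, \ldots, a+j-1\} \subseteq [n]$ of $j$ consecutive integers belongs to $E(H)$. The desired conclusion is exactly $P(n)$. The base case $P(k)$ is immediate: the only $k$-subset of a block of $k$ consecutive integers is the block itself, and these are the edges of the given monotone path.

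For the inductive step, I fix $j$ with $k \le j < n$, assume $P(j)$, and consider a $k$-subset $T$ of a block $B = \{a, a+1, \ldots, a+j\}$ of length $j+1$. If $a \notin T$, then $T$ lies in the length-$j$ block $\{a+1, \ldots, a+j\}$ and belongs to $E(H)$ by $P(j)$, and the case $a+j \notin T$ is symmetric. The only remaining case is that $T$ contains both endpoints of $B$. Since $|T \setminus \{a, a+j\}| = k-2$ while $|B \setminus \{a, a+j\}| = j-1 \ge k-1$, there is an element $x \in B \setminus T$ strictly between $a$ and $a+j$. Writing $V = T \cup \{x\} = \{v_1 < v_2 < \cdots < v_{k+1}\}$, we have $v_1 = a$, $v_{k+1} = a+j$, and $x = v_m$ for some $m$ with $2 \le m \le k$. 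The outer $k$-subsets $\{v_1, \ldots, v_k\}$ and $\{v_2, \ldots, v_{k+1}\}$ sit inside the length-$j$ blocks $\{a, \ldots, a+j-1\}$ and $\{a+1, \ldots, a+j\}$ respectively, so both belong to $E(H)$ by the inductive hypothesis. Transitivity, applied to $V$, then forces all $k$-subsets of $V$ into $E(H)$; in particular $T = V \setminus \{v_m\}$ lies in $E(H)$, completing the induction.

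The only subcase where transitivity is actually invoked is the one in which $T$ contains both endpoints of $B$, and the main conceptual point is the choice of the induction parameter (length of the consecutive block) together with the observation that a single auxiliary element $x$ placed strictly between the two endpoints is enough to trigger the transitivity axiom. Once these two ingredients are in place, the argument is essentially routine and I do not anticipate any serious obstacle beyond a careful bookkeeping of indices.
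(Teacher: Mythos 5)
Your proof is correct, and it takes a genuinely different (and arguably more self-contained) route than the paper. The paper inducts on the number of vertices $n$: it first applies the inductive hypothesis to $H[\{1,\ldots,n-1\}]$ and $H[\{2,\ldots,n\}]$ to conclude both are complete, then observes that this makes every punctured subhypergraph $H[[n]\setminus\{i\}]$ contain a monotone path of length $n-1$, applies the inductive hypothesis a second time to each of these, and finishes by noting that every $k$-subset of $[n]$ misses some vertex. In that argument, transitivity is invoked only at the base case $n=k+1$. Your proof instead inducts on the length of the consecutive interval and invokes transitivity once per new edge, directly: when $T$ contains both endpoints of the $(j+1)$-block, you augment $T$ by a single interior element $x$ to form a $(k+1)$-set whose two extremal $k$-subsets fall into length-$j$ blocks and are therefore edges by induction, and transitivity closes the deal. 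The paper's route is a bit slicker in that it reduces to strictly smaller hypergraphs, but it requires the intermediate observation that the punctured hypergraph still carries a monotone path; your route avoids that lemma entirely at the cost of a slightly more hands-on case analysis on whether $T$ touches the endpoints. Both are short and correct.
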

\begin{proof}
We proceed by induction on $n$.  The base case $n = k + 1$ follows from the definition of transitivity.

For the inductive step, suppose that $n>k+1$ and that the statement has already been established for all ordered hypergraphs with fewer than $n$ vertices. Let $H$ be a hypergraph with $n$ vertices meeting the conditions of the lemma. By the induction hypothesis, the induced subhypergraph $H[\{1,\ldots,n-1\}]\subseteq H$, consisting of all edges of $H$ contained in $\{1,\ldots,n-1\}$ is complete, and so is the induced subhypergraph $H[\{2,\ldots,n\}]\subseteq H$.

Observe that this implies that for any $1\le i\le n$, the induced hypergraph $H[\{1,\ldots,n\}\setminus\{i\}]$ contains a monotone path of length $n-1$. As the edge set of $H[\{1,\ldots,n\}\setminus\{i\}]$ is clearly transitive, we can apply the induction hypothesis to conclude that $H[\{1,\ldots,n\}\setminus\{i\}]$ is a complete $k$-uniform hypergraph, for all $i$. Therefore, $H$ is also a complete $k$-uniform hypergraph.\end{proof}

\section{Noncrossing convex bodies---Proof of Theorem~\ref{convex}}\label{geomsect}

Let $\mathcal{C} = \{C_1,C_2,...,C_N\}$ be a family of $N = N_3(3,n)$ noncrossing convex bodies in general position in the plane, numbered from left to right, according to the $x$-coordinates of their left endpoints (leftmost points). We have to show that $\mathcal{C}$ contains $n$ members in convex position.

In order to complete the proof outlined at the end of Subsection~\ref{noncrossing} in the Introduction, we have to define a $3$-coloring of all triples $(C_i,C_j,C_k)$ with $i<j<k$, which satisfies some special properties. We need some definitions.

We say that the triple $(C_i,C_j,C_k), i<j<k$ has a \emph{clockwise (counterclockwise) orientation} if there exist distinct points $q_i\in C_i, q_j\in C_j, q_k\in C_k$ such that they lie on the boundary of $conv(C_i\cup C_j\cup C_k)$ and appear there in clockwise (counterclockwise) order (cf. \cite{pt3}, \cite{suk}). Clearly, the orientation of a triple can be clockwise and counterclockwise at the same time. See Figure 1.

 \begin{figure}[h]
  \centering
    \subfigure[$(C_i,C_j,C_k)$ not in general position.]{\label{hide2}\includegraphics[width=.2\textwidth]{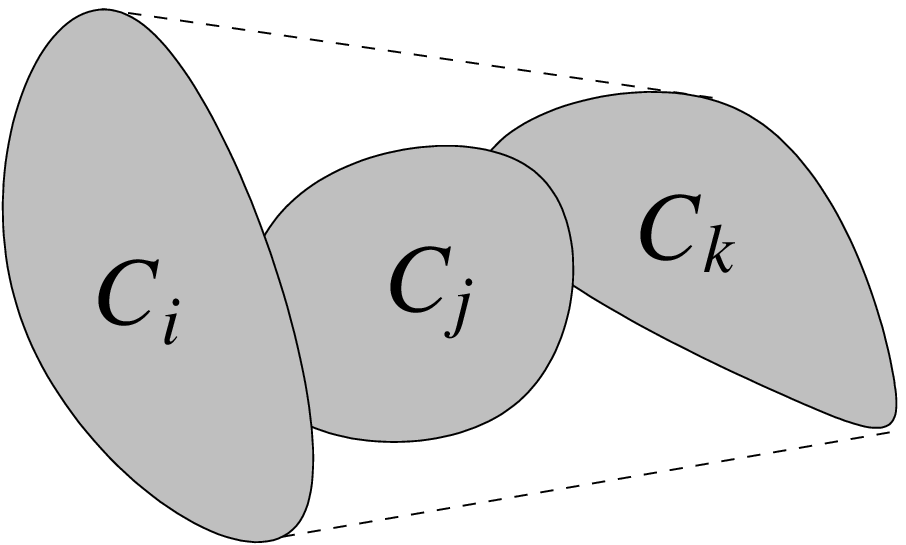}}  \hspace{1cm}
      \subfigure[Clockwise orientation.]{\label{clockwise}\includegraphics[width=0.2\textwidth]{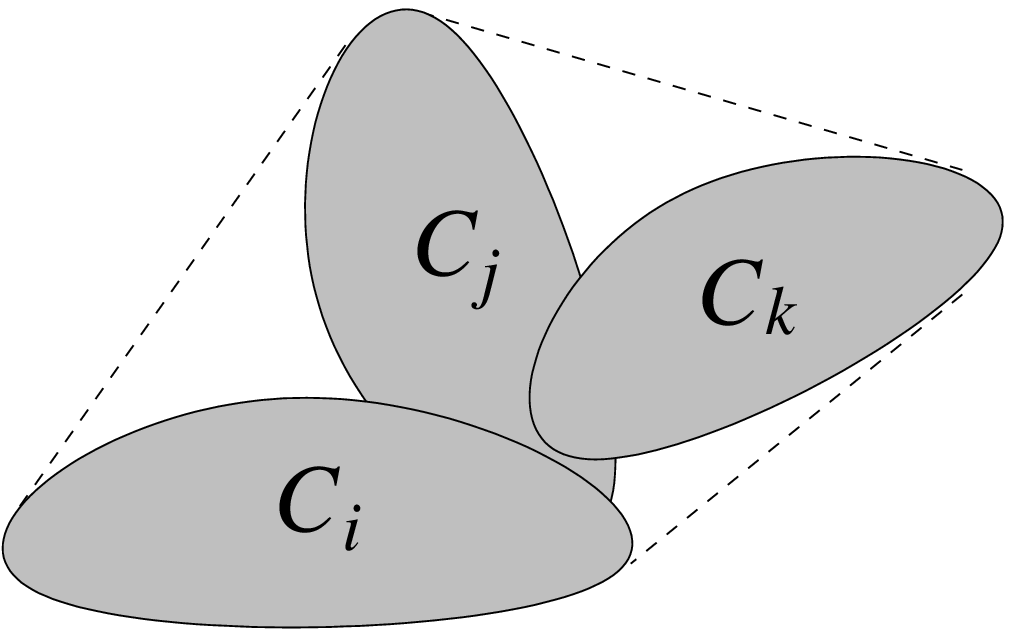}}   \hspace{1cm}
  \subfigure[Counterclockwise orientation.]{\label{counter}\includegraphics[width=.16 \textwidth]{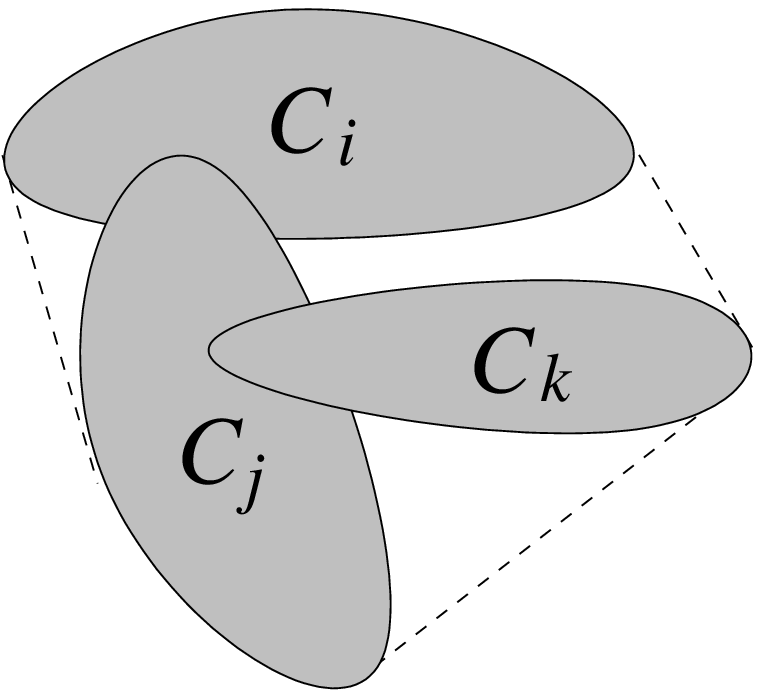}}  \hspace{1cm}
    \subfigure[Both orientations.]{\label{both}\includegraphics[width=0.16 \textwidth]{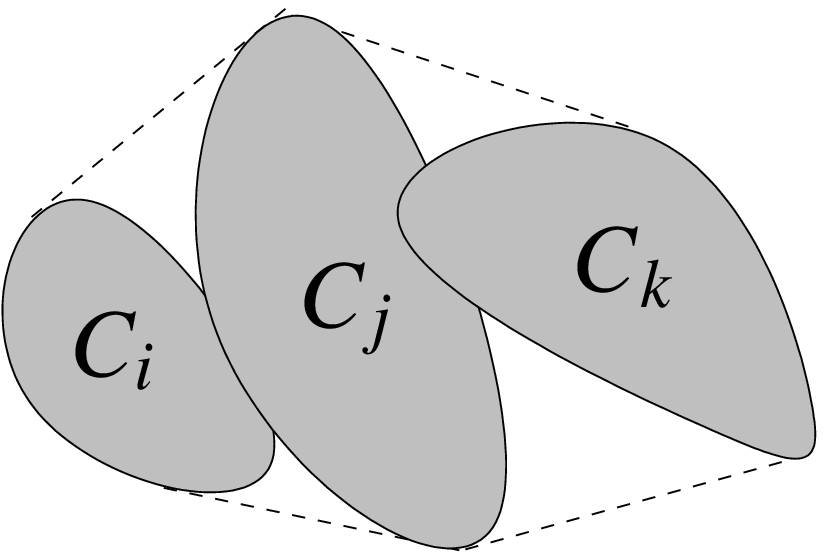}}
\caption{Orientations of convex bodies.}
  \label{fig:order}
\end{figure}

As we numbered the members of $\mathcal{C}$ according to the order of their left endpoints, for any $i < j< k$, the left endpoint of $C_i$ must lie on the boundary of $conv(C_i\cup C_j\cup C_k)$. The triple $(C_i,C_j,C_k)$ is said to have a {\em strong-clockwise (strong-counterclockwise) orientation} if there exist points $q_j\in C_j, q_k\in C_k$ such that, starting at the left endpoint $q_i^*$ of $C_i$, the triple $(q_i^*,q_j,q_k)$ appears in clockwise (counterclockwise) order along the boundary of $conv(C_i\cup C_j\cup C_k)$.  We say that $(C_i,C_j,C_k)$ has \emph{both strong orientations} if it has both a strong-clockwise and a strong-counterclockwise orientation. Notice that it is possible that $(C_i,C_j,C_k)$ has both orientations, but only one \emph{strong} orientation. On the other hand, every triple has at least one strong orientation. Obviously, if $(C_i,C_j,C_k)$ has a strong-clockwise orientation, say, then it also has a clockwise orientation.

\subsection{Definitions, notation, and observations}

Let $\mathcal{C} = \{C_1,C_2,...,C_N\}$ be a family of noncrossing convex bodies in general position, numbered from left to right, according to the $x$-coordinates of their left endpoints (leftmost points).  A subfamily of convex bodies $\mathcal{S}$ is called {\em separable} if it has a member $C$ such that $conv(\cup\mathcal{S})\setminus C$ is disconnected.  We say that $C$ \emph{separates} $\mathcal{S}$ if $conv(\cup\mathcal{S})\setminus C$ is disconnected.  Clearly if a triple is not separable, then it only has one strong orientation.  Since $\mathcal{C}$ is a family of noncrossing convex sets, every triple has at most one member that separates it.

We will make several observations on the triples of $\mathcal{C}$ using the following notation.  For $i < j < k$, let $\beta$ denote the boundary of $conv(C_i\cup C_j\cup C_k)$, $q_i^*$ denote the left endpoint of $C_i$, and let $\gamma_i = C_i\cap \beta, \gamma_j = C_j\cap \beta, \gamma_k = C_k\cap \beta$.

  \begin{observation}
\label{cknotseparate}

For $i < j < k$, $C_k$ cannot separate $(C_i,C_j,C_k)$.

\end{observation}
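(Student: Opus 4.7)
The strategy is to show that $\mathrm{conv}(C_i \cup C_j \cup C_k) \setminus C_k$ is (path-)connected. The key geometric fact is that among the three leftmost points, $q_k^*$ has the largest $x$-coordinate, so $C_k$ lies in the closed right half-plane $\{x \ge x(q_k^*)\}$ and meets the vertical line $\ell : x = x(q_k^*)$ only at $q_k^*$, while $q_i^*$ and $q_j^*$ lie strictly to the left of $\ell$. This provides a ``left corridor'' inside $R := \mathrm{conv}(C_i \cup C_j \cup C_k)$ that stitches the remainder together.

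Write $R_L = R \cap \{x \le x(q_k^*)\}$, $R_R = R \cap \{x \ge x(q_k^*)\}$, and $[P,Q] := R \cap \ell$. Since $R$ has points strictly on both sides of $\ell$, the chord $[P,Q]$ has positive length and $R_L$ is a $2$-dimensional convex region. By convexity of $C_k$ and the uniqueness of its leftmost point, $C_k \cap \ell = \{q_k^*\}$, so $R_L \cap C_k = \{q_k^*\}$, and therefore $R_L \setminus \{q_k^*\}$ is path-connected (a $2$-dimensional convex region with one point deleted). The sub-chords $[P, q_k^*)$ and $(q_k^*, Q]$ (one possibly empty, in case $q_k^*$ coincides with $P$ or $Q$) lie in both $R_L \setminus \{q_k^*\}$ and $R_R \setminus C_k$. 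It thus suffices to prove that every connected component of $R_R \setminus C_k$ meets $[P,Q] \setminus \{q_k^*\}$, for such a meeting point glues the component to the corridor $R_L \setminus \{q_k^*\}$.

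To verify that last step, I would use general position to show $C_k \cap \partial R = \partial C_k \cap \partial R$ and that this is a single connected arc of $\partial C_k$. If an arc of $\partial R$ lying on $\partial C_i$ (or $\partial C_j$) met $C_k$, the supporting line of $R$ at such a point would be a common tangent of $C_i$ (or $C_j$) and $C_k$ passing through $C_i \cap C_k$, contradicting the second condition of general position; and a tangent segment of $\partial R$ contained in $C_k$ would be a common tangent of all three of $C_i, C_j, C_k$, violating the third. A planar convexity argument then shows the set of ``exposed'' directions of $C_k$ on $\partial R$ is a single arc of $\partial C_k$. Viewing the Jordan curve $\partial R_R = [P,Q] \cup (\partial R \cap \{x \ge x(q_k^*)\})$, its intersection with $C_k$ consists of at most two components---the singleton $\{q_k^*\}$ on $[P,Q]$ and the arc $\partial C_k \cap \partial R$ on the remaining portion---so $R_R \setminus C_k$ has at most two components, and each of the two complementary arcs of $\partial R_R \setminus C_k$ bounding them contains one of the sub-chords $[P, q_k^*)$ or $(q_k^*, Q]$. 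Consequently every component of $R_R \setminus C_k$ meets $[P,Q] \setminus \{q_k^*\}$ and links to $R_L \setminus \{q_k^*\}$, proving $R \setminus C_k$ is connected.

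The main obstacle is the clean verification of the single-arc property of $\partial C_k \cap \partial R$ (the convexity argument relying crucially on the fact that $C_i$ and $C_j$ extend strictly to the left of $C_k$'s leftmost point, so the exposed part of $\partial C_k$ on $\partial R$ does not split into multiple arcs), together with handling the degenerate placements where $q_k^* \in \{P, Q\}$, in which case $\{q_k^*\}$ merges with the arc $\partial C_k \cap \partial R$ into a single component of $C_k \cap \partial R_R$, making $R_R \setminus C_k$ itself connected and rendering the argument immediate.
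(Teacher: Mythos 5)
There is a genuine gap at the decisive step. Your whole argument rests on the claim that $\partial C_k \cap \partial R$ is a single connected arc, which you announce as following from ``a planar convexity argument'' and then, in the closing paragraph, identify as ``the main obstacle'' without actually resolving it. But that claim is not a side lemma: it is essentially the observation itself. If $C_k$ separated $(C_i,C_j,C_k)$, then $\gamma_k = C_k\cap\partial R$ would by definition consist of two disjoint arcs, and your corridor argument would correctly report that $R_R\setminus C_k$ has two components, each glued to one of $[P,q_k^*)$, $(q_k^*,Q]$, but these two sub-chords live in \emph{different} components of $\partial R_R\setminus C_k$ and nothing glues them to each other. So the passage from ``exposed directions of $C_k$ form a single arc'' to ``$R\setminus C_k$ is connected'' is fine, but the former is an exact reformulation of what must be shown, and it is asserted rather than proved. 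The general-position clauses you invoke only give pairwise disjointness of the boundary arcs (ruling out shared tangency segments and tangents through intersections); they say nothing about whether $\gamma_k$ has one component or two.

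To close the gap you must actually use the left-endpoint ordering, and the paper does this by contradiction rather than by a direct connectedness argument. Assuming $C_k$ separates, the boundary arcs must appear as $\gamma_i,\gamma_{k_1},\gamma_j,\gamma_{k_2}$ in cyclic order, with $q_i^*\in\gamma_i$ being the leftmost point of $R$; the arc $\gamma_j$ therefore lies in the component of $R\setminus C_k$ \emph{not} containing $q_i^*$. But $q_j^*$ has $x$-coordinate strictly less than that of $q_k^*$, so $q_j^*\notin C_k$ and $q_j^*$ lies in the \emph{same} component as $q_i^*$. Thus $C_j$ meets both components of $R\setminus C_k$, and being convex it must pass through $C_k$, forcing $\partial C_j$ and $\partial C_k$ to meet in more than two points --- contradicting the noncrossing hypothesis. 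Your vertical-line corridor and the position of $q_j^*$ are ingredients pointing in exactly this direction, but the sentence that should carry the load of the proof is the one left as a placeholder.
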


\noindent \textbf{Proof.}  For sake of contradiction, suppose that $C_k$ separates the triple $(C_i,C_j,C_k)$.   Then $\gamma_i$ is a simple continuous curve (i.e., homeomorphic to the unit interval) that contains the left most point of $\beta$, $\gamma_j$ is also a simple continuous curves, and $\gamma_k$ consists of two disjoint simple continuous curves $\gamma_{k_1}$ and $\gamma_{k_2}$.  Since no two convex bodies have a common tangent that meets at their intersection, $\gamma_i,\gamma_j,\gamma_{k_1},\gamma_{k_2}$ are pairwise disjoint.  Moreover, $\gamma_i,\gamma_{k_1},\gamma_j,\gamma_{k_2}$ appear in clockwise order along $\beta$. However since $q_i^* \in \gamma_i$ and since the left endpoint of $C_j$ lies inside $conv(C_i,C_j,C_k)$ to the left of the left endpoint of $C_k$, this implies that $C_j$ and $C_k$ cross, that is, they share more than two boundary points which is a contradiction.

$\hfill\square$

\begin{observation}
\label{bothstrongcj}
For $i < j < k$, the triple $(C_i,C_j,C_k)$ has both strong orientations if and only if $C_j$ separates it.

\end{observation}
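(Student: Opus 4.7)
The plan is to argue by analyzing the arc structure of the boundary $\beta$ of $\mathrm{conv}(C_i\cup C_j\cup C_k)$. Each $\gamma_m = C_m\cap\beta$ is a union of arcs, and by general position together with the noncrossing assumption (any two bodies share at most two boundary points), each $\gamma_m$ consists of at most two arcs, and the arcs $\gamma_i,\gamma_j,\gamma_k$ are pairwise disjoint. Moreover, $C_m$ separates $(C_i,C_j,C_k)$ precisely when $\gamma_m$ consists of two arcs on $\beta$ rather than one, because $\mathrm{conv}(C_i\cup C_j\cup C_k)\setminus C_m$ is disconnected if and only if the complementary arcs of $\gamma_m$ along $\beta$ (together with $C_m$) split the convex hull into two pieces.

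For the ``if'' direction, suppose $C_j$ separates the triple. Then $\gamma_j=\gamma_{j_1}\cup\gamma_{j_2}$ has two components, while by Observation \ref{cknotseparate} and the fact that at most one body separates (since the family is noncrossing), $\gamma_i$ and $\gamma_k$ are each single arcs. The two pieces of $\gamma_j$ must separate $\gamma_i$ from $\gamma_k$ along $\beta$, so the cyclic order of arcs on $\beta$ is $\gamma_{j_1},\gamma_i,\gamma_{j_2},\gamma_k$. Since $q_i^*\in\gamma_i$, traversing $\beta$ clockwise from $q_i^*$ one first meets a point of $\gamma_{j_2}$ and only afterwards a point of $\gamma_k$, yielding a strong-clockwise orientation; symmetrically, traversing counterclockwise from $q_i^*$ one first meets $\gamma_{j_1}$ and then $\gamma_k$, yielding a strong-counterclockwise orientation.

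For the ``only if'' direction, suppose $C_j$ does not separate the triple. By Observation \ref{cknotseparate}, $C_k$ does not separate it either, so either nothing separates or $C_i$ does. In both cases, $\gamma_j$ and $\gamma_k$ are each single connected arcs on $\beta$, while $\gamma_i$ is either one arc (no separation) or two arcs $\gamma_{i_1},\gamma_{i_2}$ that separate $\gamma_j$ from $\gamma_k$. In every subcase, a short case check on the cyclic order of arcs along $\beta$ shows that starting from $q_i^*\in\gamma_i$ and traversing $\beta$ clockwise, one of $\gamma_j,\gamma_k$ is encountered strictly before the other; the opposite body is encountered first when going counterclockwise. Hence only one strong orientation exists.

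The main obstacle, and the step I would spend most care on, is rigorously justifying the equivalence ``$C_m$ separates the triple $\iff$ $\gamma_m$ has more than one arc on $\beta$,'' and the resulting constraint on the cyclic ordering of arcs. This requires careful use of the noncrossing hypothesis (to bound the number of components of each $\gamma_m$ by two) and of general position (to ensure $\gamma_i,\gamma_j,\gamma_k$ are pairwise disjoint, so that ``first encounter'' on $\beta$ is unambiguous). Once this structural dictionary between separation and arc components is in place, both directions of the equivalence follow from an essentially pictorial inspection of the at most three possible cyclic patterns of arcs on $\beta$.
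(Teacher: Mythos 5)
Your proposal is correct and takes essentially the same approach as the paper: classifying the arcs $\gamma_i,\gamma_j,\gamma_k$ on $\beta$, using the fact that $C_m$ separates the triple exactly when $\gamma_m$ splits into two arcs (and at most one body can do so by the noncrossing hypothesis), and reading off the strong orientations from the cyclic order of these arcs starting at $q_i^*$. Your ``only if'' direction is argued contrapositively and handles the non-separable case explicitly, whereas the paper dismisses that case with a one-line remark before the Observation and then argues by contradiction assuming $C_i$ separates, but this is a cosmetic difference, not a different route.
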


\noindent \textbf{Proof.}  If $C_j$ separates the triple $(C_i,C_j,C_k)$, then $\gamma_i$ and $\gamma_k$ are simple continuous curves such that $q_i^* \in \gamma_i$, while $\gamma_j$ consists of two disjoint simple continuous curves $\gamma_{j_1},\gamma_{j_2}$.  Moreover since $\mathcal{C}$ is in general position, $\gamma_i,\gamma_{j_1},\gamma_{k},\gamma_{j_2}$ are pairwise disjoint and appear in clockwise order along $\beta$.  Therefore we can find points $q_{j_1}\in \gamma_{j_1}, q_{j_2}\in\gamma_{j_2}$, and $q_k \in \gamma_k$ such that $q^*_i,q_{j_1},q_k,q_{j_2}$ appear on $\beta$ in clockwise order.  Hence $(C_i,C_j,C_k)$ has both strong orientations.

For the other direction, suppose that $(C_i,C_j,C_k)$ has both strong orientations.  Since the triple $(C_i,C_j,C_k)$ is separable, and we know that $C_k$ does not separate it, it suffices to show that $C_i$ does not separate $(C_i,C_j,C_k)$.

Suppose $C_i$ separates $(C_i,C_j,C_k)$.   Then $\gamma_j$ and $\gamma_k$ are simple continuous curves, and $\gamma_i$ consists of two disjoint simple continuous curves $\gamma_{i_1}$ and $\gamma_{i_2}$.  We can assume that $q_i^* \in \gamma_{i_1}$.  Since $C_i$ separates $(C_i,C_j,C_k)$, $\gamma_{i_1},\gamma_j,\gamma_{i_2},\gamma_k$ are pairwise disjoint and must appear in clockwise or counterclockwise order along $\beta$.  If $\gamma_{i_1},\gamma_j,\gamma_{i_2},\gamma_k$ appear in clockwise order, then $(C_i,C_j,C_k)$ has only a strong-clockwise orientation since there does not exists points $q_j\in \gamma_j$ and $q_k\in \gamma_k$ such that $q_i^*,q_j,q_k$ appear in counterclockwise order along $\beta$.  Likewise if $\gamma_{i_1},\gamma_j,\gamma_{i_2},\gamma_k$ appear in counterclockwise order along $\beta$, then $(C_i,C_j,C_k)$ has only a strong-counterclockwise orientation.  In either case, $(C_i,C_j,C_k)$ does not have both strong orientations and we have a contradiction.

$\hfill\square$

Suppose that $C_i$ separates $(C_i,C_j,C_k)$.  By the argument above $(C_i,C_j,C_k)$ has only one strong orientation since $\gamma_{i_1},\gamma_{j},\gamma_{i_2},\gamma_k$ must appear in clockwise or counterclockwise order along $\beta$.   Let $h$ be a fixed line that goes through $q^*_i$ and intersects $\gamma_{i_2}$.  We can assume that $h$ is not vertical.  Then by the noncrossing property and by convexity of $conv(C_i\cup C_j\cup C_k)$, regions $C_j\setminus C_i$ and $C_k\setminus C_i$ must be separated by $h$.  Therefore we make the following observation.

\begin{observation}
\label{ciseparates}
For $i < j < k$, suppose that $C_i$ separates the triple $(C_i,C_j,C_k)$ and let $h$ be the line described as above.  If $C_j\setminus C_i$ lies in the upper half-plane generated by $h$ (and $C_k\setminus C_i$ lies in the lower half-plane), then $(C_i,C_j,C_k)$ has only a strong-clockwise orientation.  Likewise if $C_j\setminus C_i$ lies in the lower half-plane generated by $h$ (and $C_k\setminus C_i$ lies in the upper half-plane), then $(C_i,C_j,C_k)$ has only a strong-counterclockwise orientation.

\end{observation}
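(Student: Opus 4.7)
The plan is to exploit how $h$ splits the boundary $\beta$ of $conv(C_i\cup C_j\cup C_k)$ into an upper and a lower arc, and then read off the cyclic order in which $\gamma_{i_1},\gamma_j,\gamma_{i_2},\gamma_k$ appear along $\beta$. First I would observe that $h\cap\beta=\{q_i^*,p\}$ for some $p\in\gamma_{i_2}$: since $\beta$ bounds a convex region, the line $h$ meets $\beta$ in at most two points, and $q_i^*\in\gamma_{i_1}$ together with the given intersection in $\gamma_{i_2}$ already exhaust them. So $h$ partitions $\beta$ into an arc $\beta_+$ in the closed upper half-plane and an arc $\beta_-$ in the closed lower half-plane, both joining $q_i^*$ to $p$.

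Under the hypothesis that $C_j\setminus C_i$ lies in the upper half-plane, the arc $\gamma_j\subseteq C_j\cap\beta$ sits in $\beta_+$ (any point of $\gamma_j$ off $h$ is in $C_j\setminus C_i$ and hence above $h$, while by general position $\gamma_j$ is disjoint from $\gamma_{i_1}\cup\gamma_{i_2}$ and so avoids $h\cap\beta$). Symmetrically $\gamma_k\subseteq\beta_-$. Since $q_i^*$ is the leftmost point of $\beta$ and $h$ is not vertical, traversing $\beta$ clockwise from $q_i^*$ moves upward initially and hence enters $\beta_+$ first; therefore the clockwise cyclic order of the four arcs along $\beta$ is exactly $\gamma_{i_1},\gamma_j,\gamma_{i_2},\gamma_k$. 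For any choice of $q_j\in\gamma_j$ and $q_k\in\gamma_k$ the points $q_i^*,q_j,q_k$ then appear on $\beta$ in clockwise order, giving strong-clockwise orientation. Conversely, a strong-counterclockwise orientation would require $q_j\in C_j\cap\beta=\gamma_j$ and $q_k\in C_k\cap\beta=\gamma_k$ with $q_i^*,q_j,q_k$ in counterclockwise order on $\beta$; but counterclockwise traversal of $\beta$ from $q_i^*$ visits every point of $\gamma_k$ strictly before any point of $\gamma_j$, forcing the counterclockwise order $q_i^*,q_k,q_j$ instead. Hence no strong-counterclockwise orientation exists. The second case of the observation is the mirror image, proved by swapping $\beta_+$ with $\beta_-$ (equivalently, clockwise with counterclockwise).

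The most delicate piece I anticipate is the geometric bookkeeping that certifies $\gamma_j\subseteq\beta_+$ and $\gamma_k\subseteq\beta_-$, i.e.\ that the half-plane separation of $C_j\setminus C_i$ and $C_k\setminus C_i$ asserted just before the observation really transfers to their pieces on $\beta$. Once this is in place, the rest of the proof is a direct reading of the cyclic order of four disjoint arcs on the Jordan curve $\beta$.
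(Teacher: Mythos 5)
Your proof is correct and takes essentially the same approach as the paper: establish that $\gamma_{i_1},\gamma_j,\gamma_{i_2},\gamma_k$ appear in clockwise cyclic order along $\beta$, and read off that no strong-counterclockwise orientation is possible. You supply more detail than the paper (which simply asserts the clockwise ordering) by tracking how $h$ splits $\beta$ into arcs $\beta_\pm$ and locating $\gamma_j$ and $\gamma_k$ on them, but the route is the same.
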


\noindent \textbf{Proof.}  If $C_j\setminus C_i$ lies in the upper half-plane generated by $h$, then $\gamma_{i_1},\gamma_j, \gamma_{i_2},\gamma_k$ must appear in clockwise order along $\beta$.  Hence there does not exists points $q_j \in \gamma_j$ and $q_k\in \gamma_k$ such that $q^*_i,q_j,q_k$ appear in counterclockwise order along $\beta$.  Therefore $(C_i,C_j,C_k)$ does not have a strong-counterclockwise orientation, which implies that it only has a strong-clockwise orientation since every triple has at least one strong orientation.

By a similar argument, if $C_j\setminus C_i$ lies in the lower half-plane generated by $h$, this implies that $(C_i,C_j,C_k)$ has only a strong-counterclockwise orientation.

$\hfill\square$

For each pair of indices $i < j$, let $\vec{l}_{ij}$ (and $\vec{r}_{ij}$) denote the \emph{directed} common tangent line of $C_i$ and $C_j$ which meets $C_i$ before $C_j$, such that $C_i\cup C_j$ lies completely to the right (left) of $\vec{l}_{ij}$ ($\vec{r}_{ij}$).

Let $\mathcal{C}$ be a family of noncrossing convex sets in general position, and let $\beta$ denote the boundary of $conv(\cup \mathcal{C})$.  We say that the pair $(C_i,C_j)$ appears in \emph{consecutive clockwise (counterclockwise) order} on $\beta$ if there exists points $q_i\in C_i\cap \beta$ and $q_j\in C_j\cap \beta$ such that if $\gamma$ is the arc generated by moving from $q_i$ to $q_j$ along $\beta$ in the clockwise (counterclockwise) direction, then the interior of $\gamma$ does not intersect any member of $\mathcal{C}$.

If $(C_i,C_j)$ appears in consecutive clockwise (counterclockwise) order on $\beta$, then $\vec{l}_{ij}$ ($\vec{r}_{ij}$) is the directed line going through points $q_i$ and $q_j$ (where $q_i,q_j$ is defined as above) such that $conv(\cup \mathcal{C})$ lies to the right (left) of $\vec{l}_{ij}$ ($\vec{r}_{ij}$).  Therefore we have the following.

\begin{observation}
\label{bothstrongvec}
For $i < j< k$, if $(C_i,C_j,C_k)$ has both strong orientations, then $C_i\cup C_j\cup C_k$ lies to the right of $\vec{l}_{ij}$ and $\vec{l}_{jk}$, and lies to the left of $\vec{r}_{ij}$ and $\vec{r}_{jk}$. In other words, $(C_i,C_j)$ (and $(C_j,C_k)$) appears in consecutive clockwise and counterclockwise order on the boundary of $conv(C_i,C_j,C_k)$.

\end{observation}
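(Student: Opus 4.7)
The plan is to read off the topological structure of $\beta := \partial\, conv(C_i\cup C_j\cup C_k)$ using Observation~\ref{bothstrongcj} and then identify the four tangent segments on $\beta$ with $\vec{l}_{ij},\vec{l}_{jk},\vec{r}_{ij},\vec{r}_{jk}$ by an orientation check.

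First, by Observation~\ref{bothstrongcj}, the hypothesis that $(C_i,C_j,C_k)$ has both strong orientations forces $C_j$ to separate the triple. As noted in the proof of that observation, this means $\gamma_i,\gamma_k$ are each a single arc on $\beta$, while $\gamma_j$ decomposes as two disjoint arcs $\gamma_{j_1},\gamma_{j_2}$, and the four arcs appear on $\beta$ in the cyclic clockwise order
$$\gamma_i,\;\gamma_{j_1},\;\gamma_k,\;\gamma_{j_2}.$$
Since $C_i,C_j,C_k$ are in general position and convex, the four pieces of $\beta$ that connect consecutive arcs are straight line segments, each a common tangent of the two convex bodies it connects.

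Next I would identify each of these four tangent segments. Because $\beta$ is the boundary of a convex region, traversing $\beta$ clockwise keeps the interior (which contains all of $C_i,C_j,C_k$) on the right. Thus the tangent segment from $\gamma_i$ to $\gamma_{j_1}$, oriented in the clockwise sense, is a directed line with $C_i\cup C_j\cup C_k$ entirely on its right, meeting $C_i$ before $C_j$; by uniqueness of common tangents of two convex bodies in general position, this segment lies on $\vec{l}_{ij}$. The analogous argument applied to the segment from $\gamma_{j_1}$ to $\gamma_k$ identifies it as a piece of $\vec{l}_{jk}$. Running the same argument counterclockwise (interior now on the left), the segment from $\gamma_i$ to $\gamma_{j_2}$ lies on $\vec{r}_{ij}$, and the segment from $\gamma_{j_2}$ to $\gamma_k$ lies on $\vec{r}_{jk}$. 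Since each of these four directed lines supports $conv(C_i\cup C_j\cup C_k)$, the containment assertions follow.

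For the ``in other words'' sentence, one just observes that the arc of $\beta$ running clockwise from $\gamma_i$ directly to $\gamma_{j_1}$ meets no third member of the triple (the only intervening piece is the tangent segment along $\vec{l}_{ij}$), so $(C_i,C_j)$ appears in consecutive clockwise order on $\beta$; the counterclockwise arc from $\gamma_i$ to $\gamma_{j_2}$ along $\vec{r}_{ij}$ gives consecutive counterclockwise order, and the two analogous arcs at $\gamma_k$ do the same for $(C_j,C_k)$. The main (mild) obstacle is just keeping track of orientations when matching the four boundary tangent segments to the four directed tangents $\vec{l}_{ij},\vec{l}_{jk},\vec{r}_{ij},\vec{r}_{jk}$; once the clockwise-vs-counterclockwise bookkeeping is fixed, the identification is forced by uniqueness of common tangents under the general position assumption.
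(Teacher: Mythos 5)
Your proposal is correct and follows essentially the same path as the paper's own proof: invoke Observation~\ref{bothstrongcj} to conclude that $C_j$ separates the triple, read off the cyclic clockwise order $\gamma_i,\gamma_{j_1},\gamma_k,\gamma_{j_2}$ of the boundary arcs, and then conclude the consecutive-order and tangent-line statements. The only difference is cosmetic: the paper deduces the consecutive clockwise/counterclockwise orders directly from the arc decomposition and lets the preceding paragraph (which already relates consecutive order to $\vec{l}_{ij},\vec{r}_{ij}$) handle the tangent-line containments, whereas you spell out the identification of the four connecting segments with $\vec{l}_{ij},\vec{l}_{jk},\vec{r}_{ij},\vec{r}_{jk}$ explicitly via the convexity/orientation argument.
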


\noindent \textbf{Proof.} By Observation \ref{bothstrongcj}, $C_j$ separates $(C_i,C_j,C_k)$.  Therefore $\gamma_i$ and $\gamma_k$ are simple continuous curves while $\gamma_j$ consists of two disjoint simple continuous curves $\gamma_{j_1}$ and $\gamma_{j_2}$.  Moreover, $\gamma_i,\gamma_{j_1},\gamma_k,\gamma_{j_2}$ are pairwise disjoint and appear in clockwise order along the boundary of $conv(C_i\cup C_j\cup C_k)$.  Hence $(C_i,C_j)$ (and $(C_j,C_k)$) appears in consecutive clockwise and counterclockwise order on the boundary of $conv(C_i,C_j,C_k)$.
$\hfill\square$

\begin{observation}
\label{clockwisevec}
For $i < j< k$, if $(C_i,C_j,C_k)$ has only a strong-clockwise orientation, then $C_i\cup C_j\cup C_k$ lies to the right of $\vec{l}_{ij}$ and to the left of $\vec{r}_{ik}$.  In other words, $(C_i,C_j)$ appears in consecutive clockwise order and $(C_i,C_k)$ appears in consecutive counterclockwise order on the boundary of $conv(C_i\cup C_j\cup C_k)$.

\end{observation}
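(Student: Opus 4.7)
My plan is to first restrict what can separate $(C_i,C_j,C_k)$, then pin down the cyclic order of the boundary arcs $\gamma_i,\gamma_j,\gamma_k$ on $\beta=\partial\, conv(C_i\cup C_j\cup C_k)$, and finally convert the ordering into the desired tangent statements. By Observation \ref{bothstrongcj}, $C_j$ cannot separate the triple (otherwise it would have both strong orientations, contradicting the hypothesis), and by Observation \ref{cknotseparate}, $C_k$ never separates. Hence either $(C_i,C_j,C_k)$ is not separable, or $C_i$ separates it.

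Next I would handle both cases to fix the cyclic order on $\beta$. If the triple is not separable, each $\gamma_\ell$ is a single simple arc by general position. The strong-clockwise orientation exhibits points $q_j\in\gamma_j$ and $q_k\in\gamma_k$ with $q_i^*,q_j,q_k$ in clockwise order on $\beta$, forcing the cyclic clockwise order of arcs to be $\gamma_i,\gamma_j,\gamma_k$. If instead $C_i$ separates, then $\gamma_i$ splits into $\gamma_{i_1}\ni q_i^*$ and $\gamma_{i_2}$, and the four pairwise disjoint arcs occur on $\beta$ in one of the two clockwise cyclic orders $\gamma_{i_1},\gamma_j,\gamma_{i_2},\gamma_k$ or $\gamma_{i_1},\gamma_k,\gamma_{i_2},\gamma_j$. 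The argument in the proof of Observation \ref{ciseparates} shows the second order produces only a strong-counterclockwise orientation, contradicting the hypothesis, so the order must be $\gamma_{i_1},\gamma_j,\gamma_{i_2},\gamma_k$. In both cases some arc of $C_i\cap\beta$ is clockwise-adjacent on $\beta$ to $\gamma_j$, and some arc of $C_i\cap\beta$ is counterclockwise-adjacent on $\beta$ to $\gamma_k$.

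Finally, I would translate these boundary adjacencies into tangent statements. If arcs of $C_i$ and $C_j$ are clockwise-adjacent on $\beta$, then there exist $p_i\in C_i\cap\beta$ and $p_j\in C_j\cap\beta$ whose short clockwise arc on $\beta$ has interior disjoint from $C_i\cup C_j\cup C_k$. The directed line through $p_i,p_j$ with $conv(C_i\cup C_j\cup C_k)$ on its right is therefore a supporting line of this convex hull; since it is also a common tangent to $C_i$ and $C_j$ meeting $C_i$ first with $C_i\cup C_j$ on its right, general position identifies it with $\vec{l}_{ij}$. Hence $C_i\cup C_j\cup C_k$ lies to the right of $\vec{l}_{ij}$. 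The completely analogous argument applied to the counterclockwise-adjacent arcs of $C_i$ and $C_k$ places the hull to the left of $\vec{r}_{ik}$.

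The main obstacle I anticipate is the bookkeeping in the $C_i$-separating case, where ruling out the reversed cyclic order $\gamma_{i_1},\gamma_k,\gamma_{i_2},\gamma_j$ requires invoking the exact correspondence between strong orientations and cyclic orderings spelled out in the paragraph preceding Observation \ref{ciseparates}; everything else is essentially formal once the order of arcs on $\beta$ has been fixed.
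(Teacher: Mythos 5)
Your proposal is correct and takes essentially the same route as the paper: both reduce to the two cases (non-separable, or $C_i$ separates the triple) via Observations \ref{cknotseparate} and \ref{bothstrongcj}, then read off the clockwise cyclic order of boundary arcs, and conclude the consecutive-adjacency statement. Your write-up is merely more explicit than the paper's in two places the paper treats as already established — ruling out the reversed cyclic order $\gamma_{i_1},\gamma_k,\gamma_{i_2},\gamma_j$ in the separating case, and translating consecutive boundary adjacency into the $\vec{l}_{ij}$/$\vec{r}_{ik}$ tangent claims, which the paper handles in the paragraph preceding Observation \ref{bothstrongvec}.
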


\noindent \textbf{Proof.}  By Observation \ref{cknotseparate} and \ref{bothstrongcj}, either $(C_i,C_j,C_k)$ is not separable or $C_i$ separates it.  If $(C_i,C_j,C_k)$ is not separable, then $\gamma_i,\gamma_j,\gamma_k$ are pairwise disjoint simple continuous curves that appear on the boundary of $conv(C_i\cup C_j\cup C_k)$ in clockwise order. Hence $(C_i,C_j)$ appears in consecutive clockwise order and $(C_i,C_k)$ appears in consecutive counterclockwise order on the boundary of $conv(C_i\cup C_j\cup C_k)$ and we are done.

If $C_i$ separates $(C_i,C_j,C_k)$, then $\gamma_j$ and $\gamma_k$ are simple continuous curves while $\gamma_i$ consists of two disjoint simple continuous curves $\gamma_{i_1}$ and $\gamma_{i_2}$.  Moreover $\gamma_{i_1},\gamma_j,\gamma_{i_2},\gamma_k$ are pairwise disjoint and appear in clockwise order.  Hence $(C_i,C_j)$ appears in consecutive clockwise order and $(C_i,C_k)$ appears in consecutive counterclockwise order on the boundary of $conv(C_i\cup C_j\cup C_k)$

$\hfill\square$

\noindent By a symmetric argument, we have the following.

\begin{observation}
\label{counterclockwisevec}
For $i < j< k$, if $(C_i,C_j,C_k)$ has only a strong-counterclockwise orientation, then $C_i\cup C_j\cup C_k$ lies to the left of $\vec{r}_{ij}$ and to the right of $\vec{l}_{ik}$.

\end{observation}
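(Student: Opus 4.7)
My plan is to mirror the argument used in Observation \ref{clockwisevec}, simply interchanging the roles of ``clockwise'' and ``counterclockwise'' throughout. Let $\beta$ denote the boundary of $\operatorname{conv}(C_i\cup C_j\cup C_k)$ and write $\gamma_i = C_i\cap\beta$, $\gamma_j = C_j\cap\beta$, $\gamma_k = C_k\cap\beta$. The first step is to narrow down which member, if any, can separate the triple: by Observation \ref{cknotseparate}, $C_k$ never separates it, and by Observation \ref{bothstrongcj}, $C_j$ separates precisely when both strong orientations are present. Since our hypothesis is that only the strong-counterclockwise orientation occurs, we conclude that either the triple is not separable at all, or $C_i$ separates it.

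Next I would treat the two cases separately. If the triple is not separable, then $\gamma_i,\gamma_j,\gamma_k$ are pairwise disjoint simple arcs on $\beta$; since the only strong orientation is counterclockwise and $q_i^*\in\gamma_i$, these arcs must appear along $\beta$ in the counterclockwise cyclic order $\gamma_i,\gamma_j,\gamma_k$. In particular $(C_i,C_j)$ appears in consecutive counterclockwise order and $(C_i,C_k)$ appears in consecutive clockwise order on $\beta$. If instead $C_i$ separates the triple, then $\gamma_j$ and $\gamma_k$ are simple arcs and $\gamma_i$ splits into two disjoint simple arcs $\gamma_{i_1},\gamma_{i_2}$ with $q_i^*\in\gamma_{i_1}$; these four arcs are pairwise disjoint and cyclically ordered along $\beta$. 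Applying Observation \ref{ciseparates}, the hypothesis of only a strong-counterclockwise orientation forces $C_j\setminus C_i$ to lie in the lower half-plane determined by the separating line $h$ through $q_i^*$, so the four arcs appear in the counterclockwise cyclic order $\gamma_{i_1},\gamma_j,\gamma_{i_2},\gamma_k$. Again $(C_i,C_j)$ is consecutive in the counterclockwise direction and $(C_i,C_k)$ is consecutive in the clockwise direction on $\beta$.

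Finally, once consecutiveness is established in each case, I would invoke the paragraph preceding Observation \ref{bothstrongvec}: the directed common tangent realized by a consecutive counterclockwise pair $(C_i,C_j)$ is $\vec{r}_{ij}$ and has $\operatorname{conv}(C_i\cup C_j\cup C_k)$ lying to its left, while the tangent realized by a consecutive clockwise pair $(C_i,C_k)$ is $\vec{l}_{ik}$ and has the convex hull lying to its right. This yields exactly the conclusion of the observation. The only step requiring real care is the case in which $C_i$ separates the triple: I need to apply Observation \ref{ciseparates} correctly to deduce the right cyclic order of the four arcs, which is the direct counterpart of the analogous subtlety in the proof of Observation \ref{clockwisevec}.
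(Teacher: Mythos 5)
Your proposal is correct and matches the paper's intent exactly: the paper's own proof of Observation~\ref{counterclockwisevec} is simply the remark ``by a symmetric argument'' referring back to Observation~\ref{clockwisevec}, and you have spelled out that symmetric argument in full, including the correct use of Observation~\ref{cknotseparate}, Observation~\ref{bothstrongcj}, and (in the separable subcase) the converse direction of Observation~\ref{ciseparates}.
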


\subsection{The transitive property and its application}

\begin{lemma}\label{bothorientations}
Let $i < j< k< l$. If  the triples $(C_i,C_j,C_k)$ and $(C_j,C_k,C_l)$ have both strong orientations, then the same is true for $(C_i,C_j,C_l)$ and $(C_i,C_k,C_l)$.
\end{lemma}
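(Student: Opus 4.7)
By Observation \ref{bothstrongcj}, having both strong orientations is equivalent to the middle body separating the triple, so the hypothesis becomes that $C_j$ separates $(C_i,C_j,C_k)$ and $C_k$ separates $(C_j,C_k,C_l)$, and we must show that $C_j$ separates $(C_i,C_j,C_l)$ and that $C_k$ separates $(C_i,C_k,C_l)$. My approach is to exploit the tangent-line characterization in Observation \ref{bothstrongvec}: the hypothesis yields that $C_i\cup C_j\cup C_k$ lies in the strip $H_{ij}$ bounded by $\vec{l}_{ij}$ and $\vec{r}_{ij}$ (on the side of $C_i\cup C_j$) and in the analogous strip $H_{jk}$, while $C_j\cup C_k\cup C_l$ lies in $H_{jk}$ and in $H_{kl}$. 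In particular, all four bodies lie in $H_{jk}$.

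The first, and main, step is to establish that all four bodies actually lie in each of $H_{ij}$, $H_{jl}$, $H_{ik}$, and $H_{kl}$. The containments $C_l\in H_{ij}$ and $C_i\in H_{kl}$ follow by contradiction: if $C_l$ were outside $H_{ij}$, then since $C_k\in H_{ij}$ and $C_k$ separates $(C_j,C_k,C_l)$, one of the pairs $\{C_l,C_k\}$ or $\{C_l,C_j\}$ would be forced to meet the boundary line $\vec{l}_{ij}$ (or $\vec{r}_{ij}$) in a configuration inconsistent with the noncrossing property or with general position. The containments $C_i\in H_{jl}$ and $C_l\in H_{ik}$ are proved similarly, using the separating properties of $C_j$ and $C_k$ together with the fact that $C_i$ is the leftmost body.

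The second step is a converse of Observation \ref{bothstrongvec}: if both $(C_a,C_b)$ and $(C_b,C_c)$ appear in consecutive clockwise \emph{and} in consecutive counterclockwise order on the boundary of $conv(C_a\cup C_b\cup C_c)$, then $C_b$ contributes two disjoint arcs to this boundary and therefore separates $(C_a,C_b,C_c)$. This follows from a cyclic-order argument: were the boundary in clockwise order simply $C_a,C_b,C_c$, then in counterclockwise order $(C_a,C_b)$ would fail to be consecutive. By the first step, the bodies $C_l$ and $C_i$ lie strictly on the appropriate sides of the relevant tangent lines (strictness coming from general position), so the tangent segments of $\vec{l}_{ij},\vec{r}_{ij}$ between $C_i$ and $C_j$ belong to the boundary of $conv(C_i\cup C_j\cup C_l)$, witnessing that $(C_i,C_j)$ is consecutive in both orientations; likewise $(C_j,C_l)$ is consecutive in both orientations using $H_{jl}$. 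The converse then yields that $C_j$ separates $(C_i,C_j,C_l)$, and the parallel argument with the strips $H_{ik}$ and $H_{kl}$ handles $(C_i,C_k,C_l)$.

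The main obstacle is the strip-containment step: while $C_l\in H_{ij}$ and $C_i\in H_{kl}$ follow with relative ease from the chain $C_j\to C_k\to C_l$, the containments in strips between non-adjacent bodies (such as $C_i\in H_{jl}$ or $C_l\in H_{ik}$) are more delicate, since the tangent lines $\vec{l}_{jl},\vec{r}_{jl}$ and $\vec{l}_{ik},\vec{r}_{ik}$ are not directly constrained by the hypothesis. This is precisely where the noncrossing assumption and the general-position conditions are used most essentially, to rule out positions of $C_i$ or $C_l$ that would otherwise be geometrically possible.
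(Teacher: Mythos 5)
Your overall framing (reducing, via Observation \ref{bothstrongcj}, to showing that $C_j$ separates $(C_i,C_j,C_l)$ and $C_k$ separates $(C_i,C_k,C_l)$) is correct, and your ``converse'' of Observation \ref{bothstrongvec} is plausible, though you only explicitly rule out the non-separable three-arc configuration and omit the case where $C_a$ separates (it can be excluded by a similar cyclic-order check, using that $(C_b,C_c)$ would then fail to be consecutive clockwise). The real problem is the strip-containment step on which the whole argument rests. You never establish $C_i\in H_{jl}$ or $C_l\in H_{ik}$; you acknowledge they are ``delicate'' and assert that noncrossing and general position handle them, but this is precisely where a proof is needed, not a promise. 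Worse, these containments are essentially restatements of the conclusion: if $(C_i,C_j,C_l)$ has both strong orientations then Observation \ref{bothstrongvec} gives $C_i\in H_{jl}$, so deriving $C_i\in H_{jl}$ first and then concluding both strong orientations runs dangerously close to circularity unless you supply an independent geometric argument, which you do not. Even the ``easy'' containments $C_l\in H_{ij}$ and $C_i\in H_{kl}$ are only gestured at (``would be forced to meet the boundary line\ldots in a configuration inconsistent with\ldots''), without identifying the actual contradiction.

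The paper's proof avoids all of this by working in a single strip that is directly available from the hypotheses: applying Observation \ref{bothstrongvec} to each of the two given triples shows that $C_i$ and $C_l$ (and of course $C_j,C_k$) all lie between $\vec{l}_{jk}$ and $\vec{r}_{jk}$. Since $C_j$ and $C_k$ each touch both bounding lines, they block this strip, and the two separation hypotheses place $C_i$ on the far side of $C_j$ from $C_k$ and $C_l$ on the far side of $C_k$ from $C_j$; hence $C_j$ separates $(C_i,C_j,C_l)$ and $C_k$ separates $(C_i,C_k,C_l)$, and Observation \ref{bothstrongcj} finishes. You should look for this shared strip $H_{jk}$ --- it is the one object constrained by \emph{both} hypotheses simultaneously --- rather than trying to control the four strips $H_{ij},H_{jl},H_{ik},H_{kl}$, two of which are not constrained by the hypotheses at all.
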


\noindent{\bf Proof.}
By Observations \ref{bothstrongcj} and \ref{bothstrongvec},  both $C_i$ and $C_l$ must lie to the right of $\vec{l}_{jk}$ and to the left of $\vec{r}_{jk}$ as shown in Figure 2.

\begin{figure}[h]
\begin{center}
\includegraphics[width=180pt]{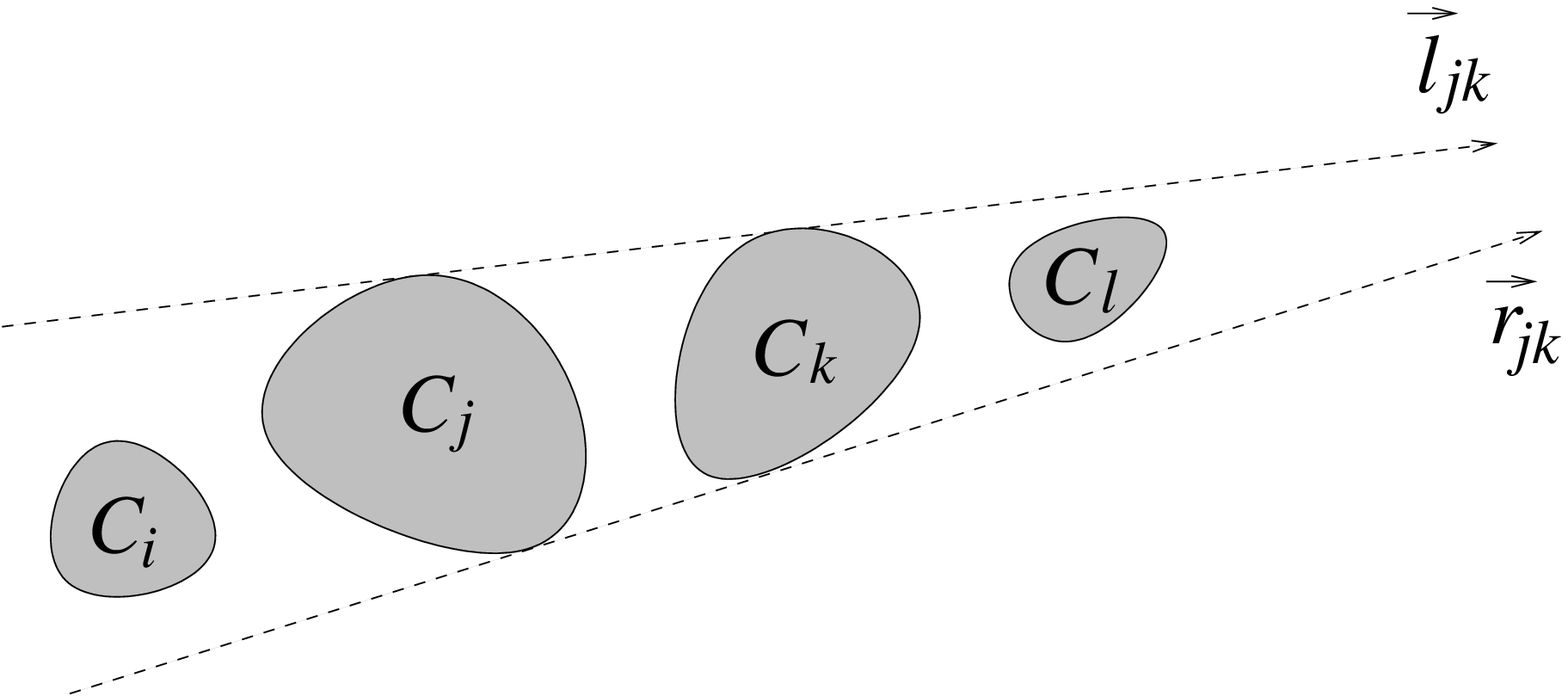}
  \caption{For Lemma \ref{bothorientations}.}
 \end{center}
\end{figure}

Hence $C_j$ separates the triple $(C_i,C_j,C_l)$ and $C_k$ separates the triple $(C_i,C_k,C_l)$, which implies $(C_i,C_j,C_l)$ and $(C_i,C_k,C_l)$ have both strong orientations.

$\hfill\square$

\begin{lemma}\label{oneorientation}
\label{case}
Let $i < j< k< l$. If the triples $(C_i,C_j,C_k)$ and $(C_j,C_k,C_l)$  have only strong-clockwise orientations (or only strong-counterclockwise orientations), then the same is true for $(C_i,C_j,C_l)$ and $(C_i,C_k,C_l)$.
\end{lemma}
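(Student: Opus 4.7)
The plan is to exploit the tangent-line characterizations recorded in Observations~\ref{clockwisevec}, \ref{counterclockwisevec}, and \ref{bothstrongvec}. Applying Observation~\ref{clockwisevec} to each of the two hypotheses gives the starting tangent-line data: $C_i\cup C_j\cup C_k$ lies to the right of $\vec{l}_{ij}$ and to the left of $\vec{r}_{ik}$, while $C_j\cup C_k\cup C_l$ lies to the right of $\vec{l}_{jk}$ and to the left of $\vec{r}_{jl}$. The goal is to promote these constraints to the analogous constraints for $(C_i,C_j,C_l)$ and $(C_i,C_k,C_l)$, and then to rule out both a ``both strong orientations'' and a ``strong-counterclockwise only'' outcome for each.

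First I would show that $(C_i,C_j,C_l)$ has a strong-clockwise orientation by establishing that $C_l$ lies to the right of $\vec{l}_{ij}$. The tangent $\vec{l}_{jk}$ touches $C_j$ and $C_k$, which both lie to the right of $\vec{l}_{ij}$; since $C_l$ lies to the right of $\vec{l}_{jk}$ and members of $\mathcal{C}$ share at most two boundary points and are in general position, $C_l$ cannot reach the left side of $\vec{l}_{ij}$ without crossing $C_j$ or $C_k$, so it too lies to the right of $\vec{l}_{ij}$. A symmetric argument using $\vec{r}_{ik}$ and $\vec{r}_{jl}$ places $C_i$ to the left of $\vec{r}_{jl}$. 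Taken together, these tangent containments force the arcs $\gamma_i,\gamma_j,\gamma_l$ on the boundary of $\text{conv}(C_i\cup C_j\cup C_l)$ to appear in clockwise order starting from the left endpoint of $C_i$, which is exactly a strong-clockwise orientation.

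Next I would exclude the strong-counterclockwise alternative for $(C_i,C_j,C_l)$. If the triple had both strong orientations, Observation~\ref{bothstrongcj} would force $C_j$ to separate it, placing $C_i$ and $C_l$ on opposite sides of $C_j$; but the tangent data from the previous step confines $C_l$ to the wedge on the far side of $C_j$ from $C_i$, a contradiction. If it had only a strong-counterclockwise orientation, Observation~\ref{counterclockwisevec} would place $C_i\cup C_j\cup C_l$ to the left of $\vec{r}_{ij}$, contradicting the position of $C_l$ we already derived. A parallel argument, with $C_k$ in place of $C_j$ and the tangent pair $\vec{l}_{jk},\vec{r}_{ik}$ doing the work, handles $(C_i,C_k,C_l)$.

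The main obstacle is the noncrossing step used to deduce that $C_l$ lies on the correct side of $\vec{l}_{ij}$, since the line $\vec{l}_{ij}$ does not itself touch $C_l$. Making this rigorous requires simultaneously invoking the at-most-two-boundary-points hypothesis, the general-position conditions, and the ordering of left endpoints (so that the leftmost point of $C_l$ lies strictly to the right of those of $C_i$ and $C_j$). A finite case analysis, splitting on which member (if any) separates each of the given triples $(C_i,C_j,C_k)$ and $(C_j,C_k,C_l)$ via Observations~\ref{cknotseparate}, \ref{bothstrongcj}, and \ref{ciseparates}, is likely needed to confirm that the wedge-confinement argument goes through in every subcase.
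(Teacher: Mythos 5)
Your proposal has a genuine gap, and the route it sketches is quite different from (and less sound than) the argument in the paper.

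The pivotal step you invoke is the deduction that since $C_j, C_k$ lie to the right of $\vec{l}_{ij}$ and $C_l$ lies to the right of $\vec{l}_{jk}$, the body $C_l$ must also lie to the right of $\vec{l}_{ij}$ ``without crossing $C_j$ or $C_k$.'' This does not follow. The lines $\vec{l}_{ij}$ and $\vec{l}_{jk}$ are unrelated common tangents that can cross; far to the right, the half-plane to the right of $\vec{l}_{jk}$ can overlap the half-plane to the left of $\vec{l}_{ij}$, and $C_l$ could sit in that overlap without intersecting $C_j$ or $C_k$ at all. The noncrossing hypothesis applies only to pairs of members of the family, not to a member and a tangent line; nothing in it forces $C_l$ to respect $\vec{l}_{ij}$. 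You acknowledge this is ``the main obstacle'' and defer to an unspecified finite case analysis, but that case analysis is in fact the entire content of the lemma, not a loose end.

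Moreover, even granting the confinement you want, the subsequent contradictions do not close. To rule out the strong-counterclockwise alternative for $(C_i,C_j,C_l)$ you cite Observation~\ref{counterclockwisevec}, which places the triple to the \emph{left} of $\vec{r}_{ij}$; but this is not contradicted by $C_l$ being to the \emph{right} of $\vec{l}_{ij}$, since $\vec{l}_{ij}$ and $\vec{r}_{ij}$ are two distinct tangents bounding the wedge between them. A body inside that wedge is simultaneously to the right of $\vec{l}_{ij}$ and to the left of $\vec{r}_{ij}$. Likewise, your argument against the ``both strong orientations'' alternative is inverted: if $C_l$ sits in the wedge on the far side of $C_j$, then $C_j$ \emph{does} separate the triple by Observation~\ref{bothstrongcj}; that is not a contradiction but precisely the configuration you need to exclude. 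In contrast, the paper's proof avoids these pitfalls by partitioning, for each of the two claims, the possible locations of $C_l \setminus C_k$ (resp.\ $C_l \setminus C_j$) into explicit regions cut out by the vertical line through $C_k$'s (resp.\ $C_j$'s) leftmost point and the relevant tangents $\vec{l}_{jk}, \vec{r}_{ik}$ (resp.\ $\vec{r}_{ij}$), then rules out the bad regions one by one using the noncrossing property, Observations~\ref{cknotseparate}--\ref{ciseparates}, and containment arguments such as $C_l \subset \mathrm{conv}(C_j \cup C_k)$, finally identifying the surviving region as the one in which $C_i$ separates the new triple and yields a strong-clockwise orientation. That region-by-region analysis, with a further split on whether $(C_i,C_j,C_k)$ is separable or $C_i$ separates it, is the substance your sketch omits.
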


\noindent{\bf Proof.}
By symmetry, it is sufficient to verify the statement in the case when the triples $(C_i,C_j,C_k)$ and $(C_j,C_k,C_l)$ have only strong-clockwise orientations.

\begin{claim}
\label{ikl}
$(C_i,C_k,C_l)$ has only a strong-clockwise orientation.
\end{claim}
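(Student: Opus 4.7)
The plan is to verify the two parts of ``only strong-clockwise'' for $(C_i,C_k,C_l)$ in turn: first, exhibit a strong-clockwise orientation, and second, rule out a strong-counterclockwise one. I would begin by invoking Observation \ref{clockwisevec} on each hypothesis to record the tangent-line constraints: $C_i\cup C_j\cup C_k$ lies to the right of $\vec{l}_{ij}$ and to the left of $\vec{r}_{ik}$, while $C_j\cup C_k\cup C_l$ lies to the right of $\vec{l}_{jk}$ and to the left of $\vec{r}_{jl}$. In particular, $C_l$ lies to the right of $\vec{l}_{jk}$ and $C_i$ lies to the left of $\vec{r}_{jl}$, which will pin down the qualitative position of the fourth body relative to the triangle formed by the other three.

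For the existence of a strong-clockwise orientation of $(C_i,C_k,C_l)$, I would trace the boundary $\beta$ of $\mathrm{conv}(C_i\cup C_k\cup C_l)$ clockwise starting from $q_i^*$. The idea is to inherit a witness point $q_k\in C_k$ from the strong-clockwise witness of $(C_i,C_j,C_k)$ on the boundary of $\mathrm{conv}(C_i\cup C_j\cup C_k)$, and then to pick $q_l\in C_l$ as the next convex-body point encountered along $\beta$ in the clockwise direction. The tangent-line constraints above (especially that $C_l$ lies to the right of $\vec{l}_{jk}$) are exactly what is needed to ensure that adding $C_l$ to the configuration cannot overwrite the clockwise arc from $q_i^*$ to $q_k$, so the inherited witness survives on $\beta$.

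For the non-existence of a strong-counterclockwise orientation, I would argue by contradiction. If $(C_i,C_k,C_l)$ had a strong-counterclockwise orientation in addition to the strong-clockwise one just produced, then by Observation \ref{bothstrongcj} the middle body $C_k$ would separate it, so $\beta$ would contain two disjoint arcs of $C_k$ flanking the arcs on $C_i$ and $C_l$. I would then locate $C_j$ relative to this configuration. Since $(C_j,C_k,C_l)$ has only a strong-clockwise orientation, Observation \ref{bothstrongcj} says $C_k$ does \emph{not} separate $(C_j,C_k,C_l)$; similarly, since $(C_i,C_j,C_k)$ has only a strong-clockwise orientation, Observation \ref{bothstrongcj} together with Observation \ref{cknotseparate} says that $C_j$ (and not $C_k$) is the only possible non-$C_i$ separator of $(C_i,C_j,C_k)$, and in fact it does not separate either. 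Combining these with the two-arc structure of $C_k$ on $\beta$ and the noncrossing hypothesis (each pair of bodies shares at most two boundary points), the position of $C_j$ must force either a crossing between $C_j$ and one of $C_i,C_k,C_l$ or a common tangent to three members of $\mathcal{C}$, both of which contradict the standing hypotheses on $\mathcal{C}$.

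The main obstacle is this last step: converting the topological configuration (two arcs of $C_k$ on $\beta$, plus the constrained position of $C_j$) into a concrete violation of the noncrossing / general-position hypotheses. I expect this to require tracing the boundaries of $\mathrm{conv}(C_i\cup C_j\cup C_k)$ and $\mathrm{conv}(C_j\cup C_k\cup C_l)$ simultaneously, using the directed tangents $\vec{l}_{jk}$ and $\vec{r}_{jk}$ as a common reference along $C_k$, and carefully handling the cases distinguished by which of $C_i$, $C_l$ sits on which side of a chosen tangent line of $C_k$.
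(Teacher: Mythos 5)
The proposal has a genuine gap and also a structural inefficiency.

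First, the structural point: your plan is to (i) exhibit a strong-clockwise orientation of $(C_i,C_k,C_l)$ and then (ii) rule out a strong-counterclockwise one. But (i) is unnecessary — the paper notes that every triple has at least one strong orientation, so ruling out strong-counterclockwise automatically leaves strong-clockwise as the only one. Moreover, your attempt at (i) is unjustified: you propose to ``inherit'' a witness $q_k$ from the boundary of $\mathrm{conv}(C_i\cup C_j\cup C_k)$, but once $C_j$ is removed and $C_l$ added, there is no reason that $q_k$ still lies on the boundary of $\mathrm{conv}(C_i\cup C_k\cup C_l)$, nor that the arc from $q_i^*$ to it persists. The tangent constraint ``$C_l$ to the right of $\vec{l}_{jk}$'' says nothing directly about the new convex hull's boundary near $q_k$.

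The real work — ruling out the strong-counterclockwise orientation — is exactly where your proposal stops and says ``I expect this to require \ldots carefully handling the cases.'' The paper does not proceed by assuming $C_k$ separates $(C_i,C_k,C_l)$ and trying to force a crossing; instead, it first makes the reduction $(\ast)$: if $C_l$ lies to the right of $\vec{r}_{ik}$ one is immediately done by Observation \ref{counterclockwisevec}, so one may assume $C_l$ lies to the left of $\vec{r}_{ik}$ and (by Observation \ref{clockwisevec} applied to $(C_j,C_k,C_l)$) to the right of $\vec{l}_{jk}$. This reduction does not appear in your list of tangent-line constraints — you extract $C_l$ to the right of $\vec{l}_{jk}$ and $C_i$ to the left of $\vec{r}_{jl}$, but the latter is not what is needed. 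The paper then introduces the vertical line $L$ through the left endpoint of $C_k$ and the four tangency points $p_1,\ldots,p_4$ on $\partial C_k$, establishes their clockwise order, partitions the admissible locations of $C_l\setminus C_k$ into Regions I, II, III, rules out I and II via containment and via Observations \ref{ciseparates} and \ref{bothstrongcj} applied to $(C_j,C_k,C_l)$, and then shows that in Region III $C_i$ must intersect the region, that $C_i$ separates $(C_i,C_k,C_l)$, and concludes via Observation \ref{ciseparates}. None of this region analysis, nor the crucial appeal to Observation \ref{ciseparates} to identify $C_i$ (not $C_k$) as the separator, appears in your proposal. As written, the proposal identifies the relevant observations but does not carry out the argument.
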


\noindent Notice that we can make the following assumption.

\begin{center}
\begin{tabular}{rl}
($\ast$) & $C_l$ lies to the left of $\vec{r}_{ik}$ and to the right of $\vec{l}_{jk}$.
\end{tabular}
\end{center}

\noindent Indeed, by Observations \ref{bothstrongvec} and \ref{counterclockwisevec}, we can assume that $C_l$ lies to the left of $\vec{r}_{ik}$ since otherwise $(C_i,C_k,C_l)$ would not have a strong-counterclockwise orientation and we would be done, that is, $(C_i,C_k,C_l)$ has only a strong-clockwise orientation since every triple has at least one strong orientation.  Moreover by Observation \ref{clockwisevec}, $C_l$ must lie to the right of $\vec{l}_{jk}$ since $(C_j,C_k,C_l)$ has only a strong-clockwise orientation.

We define $L$ to be the vertical line through the left endpoint of $C_k$, $p_1$ to be the left endpoint of $C_k$, $p_2$ to be the first point on $\vec{l}_{jk}$ from $C_k$ in the direction of $\vec{l}_{jk}$, $p_3$ to be the first point on $\vec{r}_{ik}$ from $C_k$ in the direction of $\vec{r}_{ik}$, and $p_4$ to be the first point on $\vec{r}_{jk}$ from $C_k$ in the direction of $\vec{r}_{jk}$.  For simplicity, we will assume that $p_1,p_2,p_3,p_4$ are distinct, since otherwise it would just make the remaining analysis easier.

\begin{observation}
\label{p1234}
 $p_1,p_2,p_3,p_4$ appear in clockwise order along the boundary of $C_k$.

\end{observation}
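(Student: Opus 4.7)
The plan is to establish the clockwise order by pinning down the position of each of the four points on $\partial C_k$ via the geometric meaning of the corresponding tangent line, and then reading off the order.

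First, I would locate each $p_i$ on a specific arc of $\partial C_k$. The leftmost point $p_1$ sits on the left of $C_k$, and the vertical line $L$ is tangent to $C_k$ there. The tangent $\vec{l}_{jk}$ is the directed common tangent of $C_j$ and $C_k$ with $C_j\cup C_k$ on its right; since $C_j$ lies to the left of $C_k$ (as $j<k$ and bodies are sorted by left endpoint), $\vec{l}_{jk}$ is the \emph{upper} external tangent, so $p_2$ lies on the upper-left arc of $\partial C_k$ — the arc traversed clockwise from $p_1$ up to the topmost point. By the same token, $\vec{r}_{jk}$ and $\vec{r}_{ik}$ are \emph{lower} external tangents (their corresponding unions lie to the left of the directed tangent), so $p_4$ and $p_3$ both lie on the lower-left arc of $\partial C_k$, i.e., the arc traversed clockwise from the bottommost point up to $p_1$.

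Next, I would distinguish $p_3$ from $p_4$. Since $(C_i,C_j,C_k)$ has only a strong-clockwise orientation, Observation \ref{clockwisevec} gives that $C_i\cup C_j\cup C_k$ lies to the left of $\vec{r}_{ik}$; in particular $C_j$ lies (weakly) above $\vec{r}_{ik}$. Consequently, the lower tangent $\vec{r}_{jk}$, which must support $C_j$ from below, lies above $\vec{r}_{ik}$ in a neighborhood of $C_j$. Both $\vec{r}_{ik}$ and $\vec{r}_{jk}$ touch $\partial C_k$ from below, and the family of lower tangents to $C_k$ is ordered so that as the tangent line is rotated upward on the left of $C_k$, its point of contact moves counterclockwise along $\partial C_k$ from the bottommost point toward $p_1$. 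Hence the steeper tangent $\vec{r}_{jk}$ touches $C_k$ at a point $p_4$ strictly between the bottommost point and $p_1$ in the clockwise sense, while $\vec{r}_{ik}$ (coming from further to the left, and so shallower) touches $C_k$ at a point $p_3$ strictly between the bottommost point and $p_4$.

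Finally, I combine these locations. Traversing $\partial C_k$ clockwise starting at $p_1$, one first enters the upper-left arc and meets $p_2$, continues across the top, down the right side, and across the bottom without meeting any of the four points, and then comes up the lower-left arc meeting $p_3$ first and $p_4$ just before returning to $p_1$. This yields the clockwise order $p_1,p_2,p_3,p_4$.

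The main obstacle is the second step: rigorously comparing the tangent points $p_3$ and $p_4$ on $\partial C_k$. The argument must use the noncrossing hypothesis on $\mathcal{C}$ together with the conclusion of Observation \ref{clockwisevec} — without the strong-clockwise hypothesis on $(C_i,C_j,C_k)$, there is no control on the relative position of $\vec{r}_{jk}$ and $\vec{r}_{ik}$. Everything else is a direct reading of the definition of the directed tangents $\vec{l}$, $\vec{r}$ in terms of which side $C_i\cup C_k$ or $C_j\cup C_k$ lies.
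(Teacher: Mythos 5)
Your approach is genuinely different from the paper's: you try to pin each $p_i$ to a specific arc of $\partial C_k$ (upper-left for $p_2$, lower-left for $p_3,p_4$) and then compare tangent-line slopes, whereas the paper works with the arc $C_k \cap \partial\,\mathrm{conv}(C_j\cup C_k)$.

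Unfortunately there is a real gap in your arc-location step, and you flag the right spot but for the wrong reason. The claim that $p_3$ and $p_4$ lie on the ``lower-left arc'' of $\partial C_k$ (the clockwise arc from the bottommost point to $p_1$) is not justified and can fail. If $C_i$ sits below and to the left of $C_k$, the lower common tangent $\vec{r}_{ik}$ has positive slope, and its contact point $p_3$ lands on the \emph{lower-right} portion of $\partial C_k$ (between the bottommost and rightmost points). In that situation your traversal ``continues $\ldots$ across the bottom without meeting any of the four points'' is false, even though the conclusion $p_1,p_2,p_3,p_4$ clockwise still happens to hold. The same caveat applies to $p_2$ on the upper-left arc; you cannot read these locations off from ``$C_j$ is to the left of $C_k$,'' since the ordering is only by left endpoints, not by centers. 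There is also a directional slip: rotating a lower tangent ``upward on the left'' of $C_k$ is a clockwise rotation, and it moves the contact point \emph{clockwise} from the bottommost point toward $p_1$, not counterclockwise.

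The paper sidesteps all of this by never committing to where $p_2,p_3,p_4$ sit relative to the extreme points of $C_k$. Since $C_j$ and $C_k$ are noncrossing, $C_k\cap\partial\,\mathrm{conv}(C_j\cup C_k)$ is the clockwise arc of $\partial C_k$ from $p_2$ to $p_4$, wherever those happen to be. The payoff from Observation \ref{clockwisevec} is then structural rather than metric: $C_j$ lying to the left of $\vec{r}_{ik}$ makes $\vec{r}_{ik}$ a supporting line of $\mathrm{conv}(C_j\cup C_k)$, so its contact point $p_3$ is on that hull arc, hence between $p_2$ and $p_4$ clockwise. And since the left endpoint of $C_j$ lies strictly to the left of $L$, the leftmost point $p_1$ of $C_k$ is interior to $\mathrm{conv}(C_j\cup C_k)$ and therefore off the arc, which gives $p_1,p_2,p_4$ clockwise. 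Combining yields the observation with no case analysis of where the tangent points land. I would rewrite your second paragraph to use this hull-arc argument; your slope-comparison idea can be made to work, but it needs the missing case analysis and is considerably more delicate than the paper's route.
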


\noindent \textbf{Proof.}   Let $\partial$ denote the boundary of $conv(C_j\cup C_k)$.  Since $C_j$ and $C_k$ are noncrossing, $C_k\cap \partial$ is the arc generated by moving along the boundary of $C_k$ from $p_2$ to $p_4$ in the clockwise direction.  Since the left endpoint of $C_j$ lies to the left of $L$ and since $p_1,p_2,p_4$ are all distinct, $p_1$ must lie in the interior of $conv(C_j\cup C_k)$.  Hence $p_1,p_2,p_4$ appear in clockwise order along the boundary of $C_k$.  Therefore it suffices to show that points $p_2,p_3,p_4$ appear in clockwise order.  By Observation \ref{clockwisevec}, $C_j$ must lie to the left of $\vec{r}_{ik}$ and therefore $p_3 \in C_k\cap\partial$.  Since $p_2,p_3,p_4$ are distinct, $p_2,p_3,p_4$ appear in clockwise order along the boundary of $C_k$ and this completes the proof.

$\hfill\square$

By the ordering on $\mathcal{C}$ and by $(\ast)$, $C_l$ must lie to the right of $L$, to the right of $\vec{l}_{jk}$, and to the left of $\vec{r}_{ik}$.  Hence $C_l\setminus C_k$ must lie in one of the three regions defined as follows.  Let \emph{Region I} be the region enclosed by $L$, $\vec{l}_{jk}$, and the arc generated by moving from $p_1$ to $p_2$ along the boundary of $C_k$ in the clockwise direction, whose interior is disjoint from $C_k$.  Let \emph{Region II} be the region enclosed by $\vec{l}_{jk}$, $\vec{r}_{ik}$, and the arc generated by moving from $p_2$ to $p_3$ along the boundary of $C_k$ in the clockwise direction, whose interior is disjoint from $C_k$.  Let \emph{Region III} be the region enclosed by $L$, $\vec{r}_{ik}$, and the arc generated by moving from $p_3$ to $p_1$ along the boundary of $C_k$ in the clockwise direction, whose interior is disjoint from $C_k$.  See Figure \ref{analysis}.

\begin{observation}
\label{region1}
$C_l\setminus C_k$ does not lie in Region I.

\end{observation}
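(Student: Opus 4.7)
The plan is to argue by contradiction: assuming $C_l\setminus C_k\subseteq$ Region I, I would deduce that the triple $(C_j,C_k,C_l)$ must have a strong-counterclockwise orientation, contradicting the standing hypothesis that $(C_j,C_k,C_l)$ has only a strong-clockwise orientation.

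First I would unpack what lying in Region I forces geometrically. Region I is enclosed by $L$, $\vec{l}_{jk}$, and the arc of $\partial C_k$ going clockwise from $p_1$ to $p_2$, with interior disjoint from $C_k$; hence every portion of $C_l$ outside of $C_k$ sits strictly above the upper-left arc of $\partial C_k$ (from $p_1$ to $p_2$), below $\vec{l}_{jk}$, and to the right of $L$. Combined with the facts that the left endpoint of $C_l$ lies strictly to the right of $p_1=q_k^*$ (by the ordering of left endpoints) and that $C_l$ lies to the right of $\vec{l}_{jk}$ (by Observation~\ref{clockwisevec} applied to $(C_j,C_k,C_l)$), this squeezes $C_l$ into the upper-left pocket of $C_k$, strictly above the arc of $\partial C_k$ from $p_1$ to $p_2$.

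Next I would examine the clockwise traversal of $\partial conv(C_j\cup C_k\cup C_l)$ starting at $q_j^*$, the overall leftmost point of the hull. Because $C_l$ sits strictly above the upper-left arc of $C_k$ at the relevant $x$-coordinates while remaining below $\vec{l}_{jk}$, the clockwise boundary proceeds from the upper portion of $C_j$ via a common tangent onto the arc contributed by $C_l$, then via a common tangent onto the arc of $C_k$, and only afterwards wraps clockwise around $C_k$ (through its upper-right and lower arcs) before returning to $C_j$ via a lower tangent. Thus on $\partial conv(C_j\cup C_k\cup C_l)$ the arc of $C_l$ appears strictly before the arc of $C_k$ in clockwise order from $q_j^*$. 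Hence, for any choice of $q_k\in C_k\cap \partial conv$ and $q_l\in C_l\cap \partial conv$, the triple $(q_j^*,q_l,q_k)$ appears in clockwise order, equivalently $(q_j^*,q_k,q_l)$ appears in counterclockwise order, which is by definition a strong-counterclockwise orientation of $(C_j,C_k,C_l)$. This contradicts the hypothesis, and therefore $C_l\setminus C_k$ cannot lie in Region I.

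The main obstacle will be rigorously justifying the clockwise ordering of the three arcs on $\partial conv(C_j\cup C_k\cup C_l)$. This includes handling the degenerate subcase where $C_l$ might lie entirely inside $conv(C_j\cup C_k)$ (in which case the general position hypothesis that $(C_j,C_k,C_l)$ is in convex position is directly violated, giving the contradiction immediately), and invoking the noncrossing property together with general position to rule out pathological interleavings of the arcs of $C_k$ and $C_l$ on the hull boundary.
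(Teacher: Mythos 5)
Your overall strategy (derive a strong-counterclockwise orientation of $(C_j,C_k,C_l)$ for a contradiction) is the right idea, and it is indeed what drives one branch of the paper's proof. However, your key geometric claim -- that the clockwise boundary of $conv(C_j\cup C_k\cup C_l)$ proceeds $C_j\to C_l\to C_k$ via a chain of common tangents -- rests on a picture that is actually impossible here, and this is a genuine gap rather than merely a detail to be filled in. Since $C_l$ lies \emph{below} $\vec{l}_{jk}$ (that is the very definition of Region I), and $\vec{l}_{jk}$ is the upper common tangent of $C_j$ and $C_k$, $C_l$ cannot poke out ``between $C_j$ and $C_k$'' on the upper part of the hull. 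Consequently, the only way $C_l$ can contribute an arc to $\partial conv(C_j\cup C_k\cup C_l)$ is if $C_j$ \emph{separates} $C_k$ from $C_l$; in that case the cyclic clockwise order of boundary arcs is $C_j, C_l, C_j, C_k$ (with $C_j$ appearing twice), not $C_j, C_l, C_k$. Establishing that $C_j$ separates is exactly the nontrivial step that requires a case analysis (the paper uses the auxiliary line $h$ through $q_j^*$ and the tangent point of $C_j$ on $\vec{l}_{jk}$, then invokes Observation~\ref{ciseparates}), and that argument is missing from your sketch.

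Moreover, what you call a ``degenerate subcase'' -- $C_l\subset conv(C_j\cup C_k)$ -- is in fact a main branch of the proof, not a corner case, and it is not automatic: one must actually show that Region~I is swallowed by $conv(C_j\cup C_k)$. The paper does this by exhibiting a triangle $T_1$ (when $C_j$ avoids Region~I) or by arguing with the line $h$ (when $C_j$ meets Region~I, and $C_l\setminus C_j$ lies below $h$). Your proposal asserts the containment without proving it. To repair the argument you would need (i) a proof that when $C_l\not\subset conv(C_j\cup C_k)$, necessarily $C_j$ separates $(C_j,C_k,C_l)$, and (ii) a proof that in the remaining configurations $C_l\subset conv(C_j\cup C_k)$; both require the noncrossing hypothesis and the line $h$, as in the paper.
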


\noindent\textbf{Proof.} For sake of contradiction, suppose $C_l\setminus C_k$ lies in Region I.  Then the proof falls into two cases.

\medskip

\noindent \emph{Case 1.}  Suppose that $C_j$ does not intersect Region I.  This implies that the tangent point of $C_j$ on $\vec{l}_{jk}$ lies to the left of $L$. Therefore, the triangle $T_1$ whose vertices is this  tangent point, $p_1$, and $p_2$ contains Region I. Since the vertices of $T_1$ are in $C_j \cup C_k$, the triangle $T_1$ is a subset of the convex hull $conv(C_j \cup C_k)$. Hence,
$$C_l \subset (\textnormal{Region I}\cup C_k) \subset (T_1 \cup C_k) \subset conv(C_j\cup C_k),$$
\noindent which is a contradiction.

\medskip

\noindent \emph{Case 2.}  Suppose $C_j$ does intersect Region I.  Let $h$ be the line that goes through the left endpoint of $C_j$ and a point on $C_j\cap \vec{l}_{jk}$.  If $C_l\setminus C_j$ lies below $h$, then $C_l\subset conv(C_j\cup C_k)$ which is a contradiction.  If $C_l\setminus C_k$ lies above $h$, then $C_j$ separates $(C_j,C_k,C_l)$ and by Observation \ref{ciseparates}, $(C_j,C_k,C_l)$ has a strong counter-clockwise orientation which is also a contradiction.  See Figure \ref{R1v2}.

 \begin{figure}[h]
  \centering
  \subfigure[$C_l\setminus C_k$ must lie in either Region I, II, or III.]{\label{analysis}\includegraphics[width=0.26\textwidth]{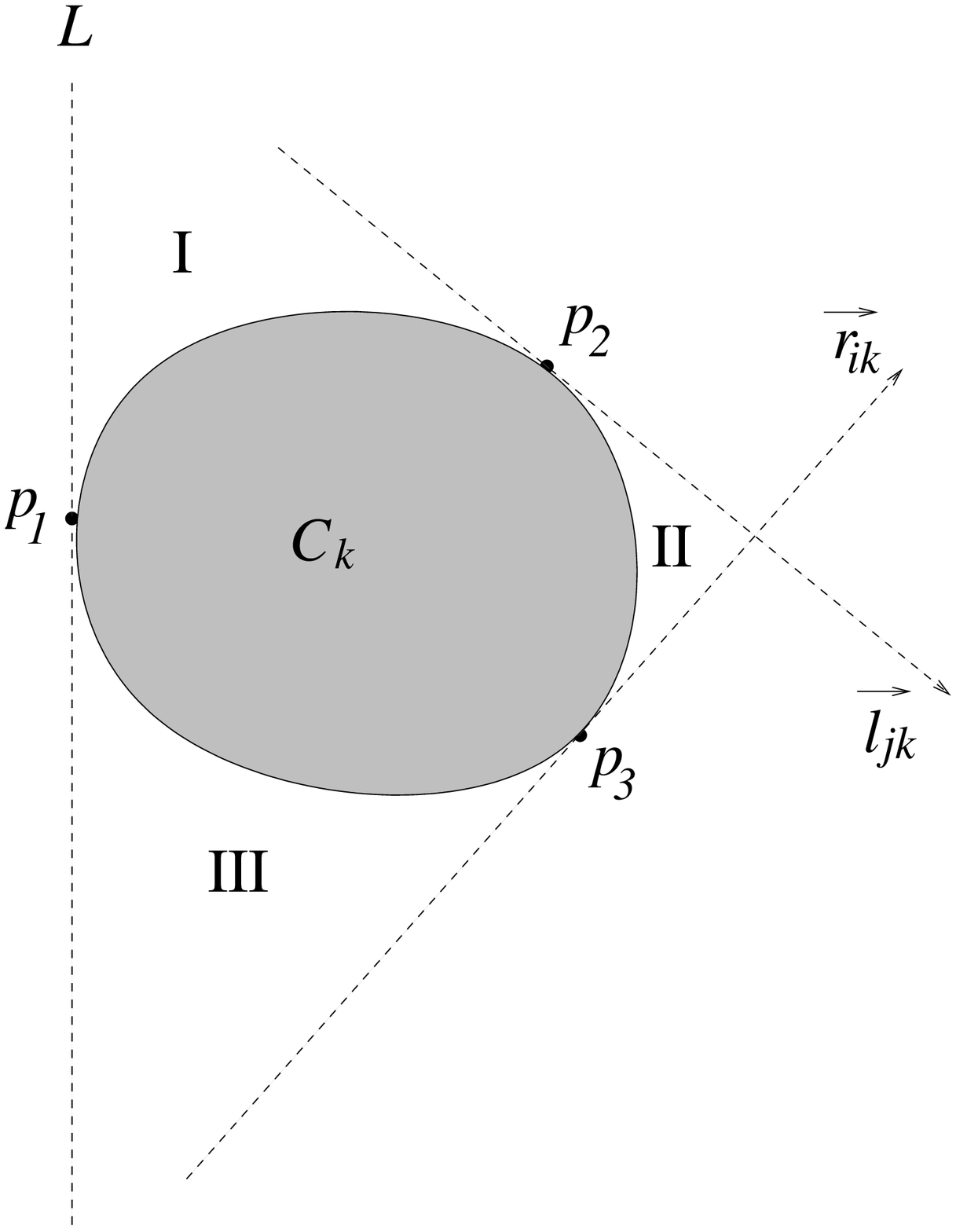}}\hspace{1cm}
                        \subfigure[$(C_j,C_k,C_l)$ does not have a strong-clockwise orientation.]{\label{R1v2}\includegraphics[width=0.30\textwidth]{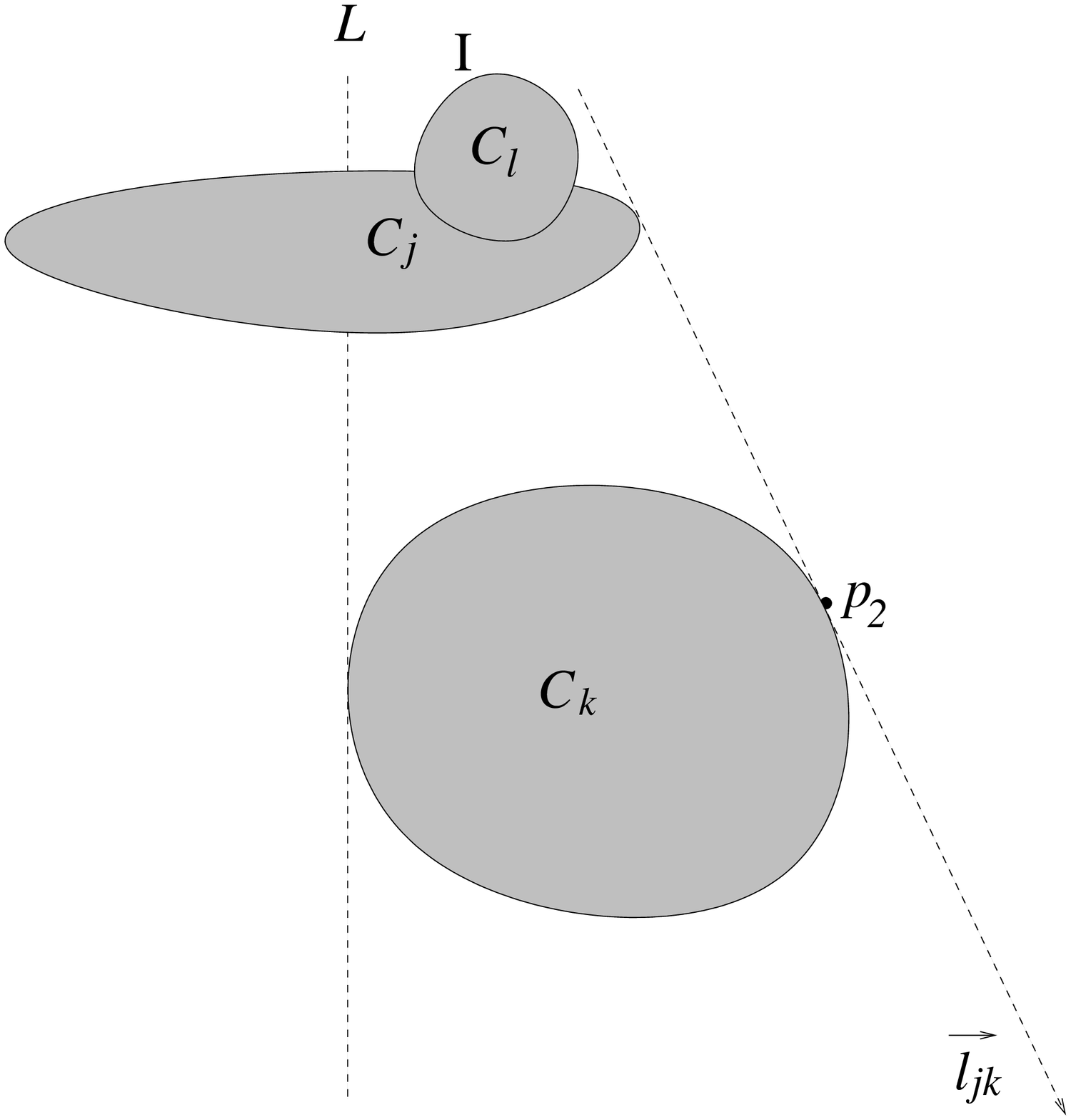}}
  \caption{Regions I, II, and III.}
  \label{fR1}
\end{figure}

$\hfill\square$

\begin{observation}

$C_l\setminus C_k$ does not lie in Region II.

  \end{observation}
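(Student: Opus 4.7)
\noindent The plan is to argue by contradiction, mirroring the two-case analysis of Observation~\ref{region1}. Assume $C_l\setminus C_k\subseteq$ Region II, and split according to whether $C_j$ intersects Region II. In the case $C_j$ does not meet Region II, the tangent point $t_j$ of $C_j$ on $\vec{l}_{jk}$ must lie outside the portion of $\vec{l}_{jk}$ bounding Region II, i.e., to the left of $p_2$ along $\vec{l}_{jk}$. One then identifies a convex region built from $t_j$, the tangent point of $C_i$ on $\vec{r}_{ik}$, and the points $p_2$ and $p_3$, which both contains Region II and lies in $\mathrm{conv}(C_i\cup C_j\cup C_k)$. This forces $C_l$ into the convex hull of two other members of $\mathcal{C}$, contradicting general position of the corresponding triple. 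An alternative finish, avoiding the explicit region construction, is to argue that the lower common tangent $\vec{r}_{jl}$ of $C_j$ and $C_l$ is forced to cross $C_k$, contradicting Observation~\ref{clockwisevec} applied to $(C_j,C_k,C_l)$, which would require $C_k$ to lie entirely to the left of $\vec{r}_{jl}$.

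\smallskip

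\noindent In the case $C_j$ does intersect Region II, let $h$ be the line through the left endpoint of $C_j$ and a point of $C_j\cap\vec{l}_{jk}$. Mirroring Case~2 of Observation~\ref{region1}, the set $C_l\setminus C_j$ lies on one of the two sides of $h$. On one side, $C_l\subseteq\mathrm{conv}(C_j\cup C_k)$, contradicting general position of $(C_j,C_k,C_l)$. On the other, $C_j$ separates $(C_j,C_k,C_l)$, and Observation~\ref{ciseparates} forces this triple to have a strong-counterclockwise orientation, contradicting our standing hypothesis that it has only a strong-clockwise orientation.

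\smallskip

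\noindent The main obstacle is Case~1: since Region II lies to the right of $C_k$ rather than between $C_j$ and $C_k$, the straightforward analog of the triangle $T_1$ from Observation~\ref{region1} does not automatically sit inside $\mathrm{conv}(C_j\cup C_k)$, so one must involve $C_i$ (via the tangent line $\vec{r}_{ik}$) and possibly split into further sub-cases depending on whether $\vec{l}_{jk}$ and $\vec{r}_{ik}$ meet east of $C_k$. Verifying the containment of Region II in the constructed convex region, or equivalently verifying that $\vec{r}_{jl}$ is forced through $C_k$, is where all the geometric subtlety of this observation resides.
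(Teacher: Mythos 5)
Your approach departs substantially from the paper's, and you yourself flag that Case~1 is unresolved (``where all the geometric subtlety of this observation resides''). That is a genuine gap: neither the containment of Region~II in a convex region determined by $t_j$, the tangent point of $C_i$ on $\vec r_{ik}$, $p_2$, and $p_3$, nor the claim that $\vec r_{jl}$ is forced through $C_k$, is established, and either would demand nontrivial sub-case work depending on where $C_l$ sits. There is also no a~priori reason why the dichotomy ``does $C_j$ meet the region or not'' should be the right split here; that split was tailored to Region~I, which is wedged between $C_j$ and $C_k$, whereas Region~II lies east of $C_k$ beyond $p_2$ and $p_3$.

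The paper's argument is much shorter and avoids your case analysis entirely by exploiting Observation~\ref{p1234}. Since $p_2,p_3,p_4$ appear in clockwise order on $\partial C_k$ and $\vec r_{jk}$ is tangent to $C_k$ at $p_4$, all of Region~II (bounded by $\vec l_{jk}$, $\vec r_{ik}$, and the clockwise arc of $\partial C_k$ from $p_2$ to $p_3$) lies to the left of $\vec r_{jk}$. If $C_l\setminus C_k$ were in Region~II, then $C_l$ would lie to the left of $\vec r_{jk}$ and, by $(\ast)$, to the right of $\vec l_{jk}$, which forces $C_k$ to separate $(C_j,C_k,C_l)$. By Observation~\ref{bothstrongcj} that triple would then have both strong orientations, contradicting the standing hypothesis that it has only a strong-clockwise one. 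The decisive object is $\vec r_{jk}$, whose tangent point $p_4$ on $C_k$ is already controlled by Observation~\ref{p1234}; working with $\vec r_{jl}$, as your alternative finish proposes, replaces a pinned-down tangent by one that depends on the unknown position of $C_l$ and makes the claim much harder, not easier, to verify.
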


\noindent \textbf{Proof.}  For sake of contradiction, suppose $C_l\setminus C_k$ lies in Region II.  By Observation \ref{p1234}, $p_2,p_3,p_4$ appear in clockwise order.  Hence $C_l$ lies to the left of $\vec{r}_{jk}$.  This implies that $C_k$ separates the triple $(C_j,C_k,C_l)$.  By Observation \ref{bothstrongcj}, $(C_j,C_k,C_l)$ has both strong orientations which is a contradiction.  See Figure \ref{R2}.

$\hfill\square$

Therefore $C_l\setminus C_k$ must lie in Region III.  Suppose for contradiction that $C_i$ does not intersect Region III. Then the tangent point of $C_i$ on $\vec{r}_{ik}$ lies to the left of $L$. Therefore, the triangle $T_2$ whose vertices is this tangent point, $p_1$, and $p_3$ contains Region III. Since the vertices of $T_2$ are in $C_i \cup C_k$, the triangle $T_2$ is a subset of the convex hull $conv(C_i \cup C_k)$. Hence,
$$C_l \subset (\textnormal{Region III}\cup C_k) \subset (T_2 \cup C_k) \subset conv(C_i\cup C_k),$$
\noindent which is a contradiction. Hence, $C_i$ must intersect Region III. Moreover, $C_l\setminus C_i$ must lie in the lower half-plane generated by the line that goes through the left endpoint of $C_i$ and a point on $C_i\cap \vec{r}_{ik}$ as otherwise again $C_{l} \subset conv(C_i \cup C_k)$. Since $C_i$ separates $(C_i,C_k,C_l)$ and by Observation \ref{ciseparates}, $(C_i,C_k,C_l)$ has only a strong-clockwise orientation.  See Figure \ref{strongclock2}.  This completes the proof of Claim \ref{ikl}.

 \begin{figure}[h]
  \centering
 \subfigure[$C_l\setminus C_k$ lies in Region II implies $(C_j,C_k,C_l)$ has both strong orientations.]{\label{R2}\includegraphics[width=0.33\textwidth]{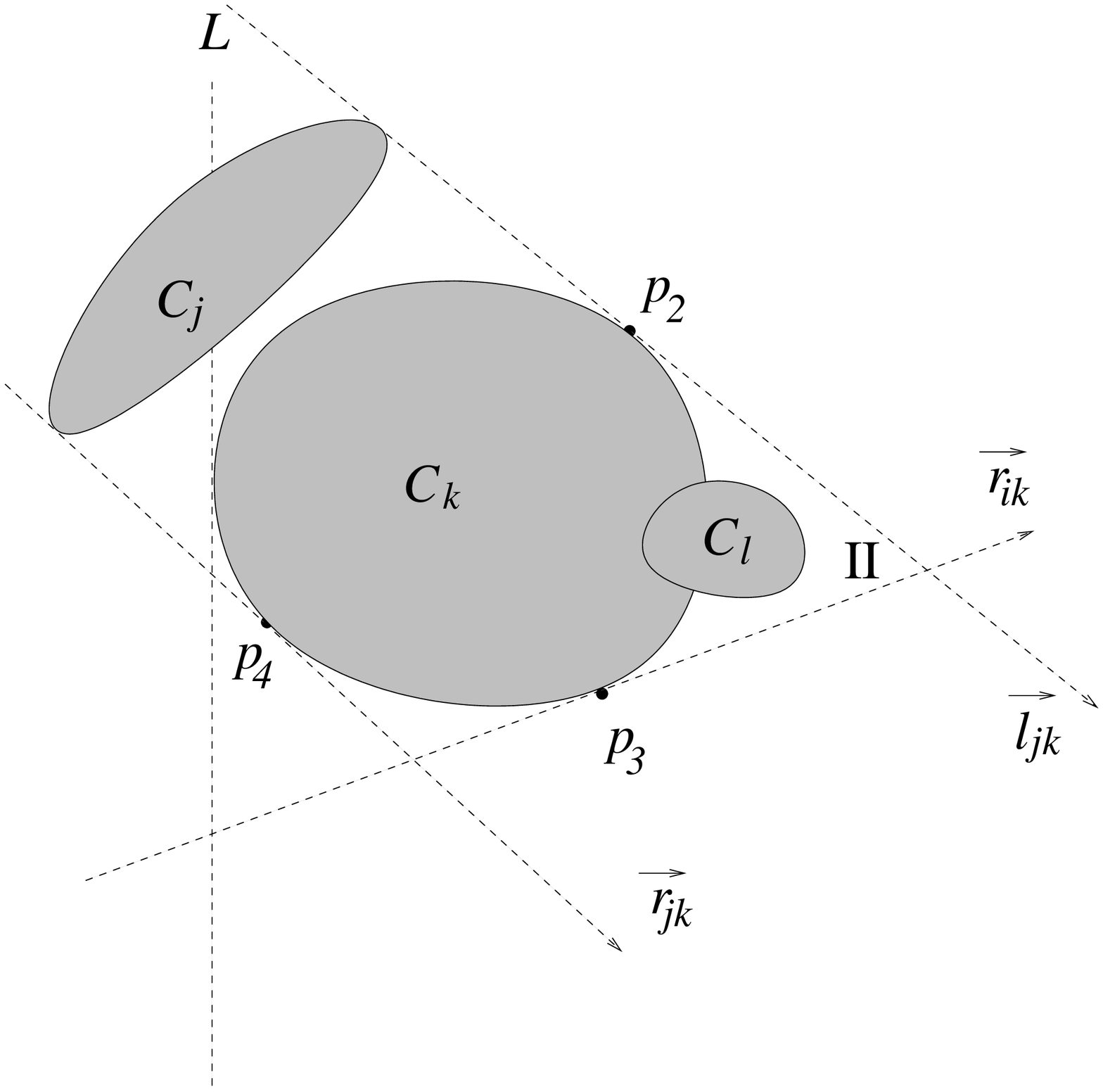}}\hspace{1cm}
                        \subfigure[$(C_i,C_k,C_l)$ has only a strong-clockwise orientation.]{\label{strongclock2}\includegraphics[width=0.28\textwidth]{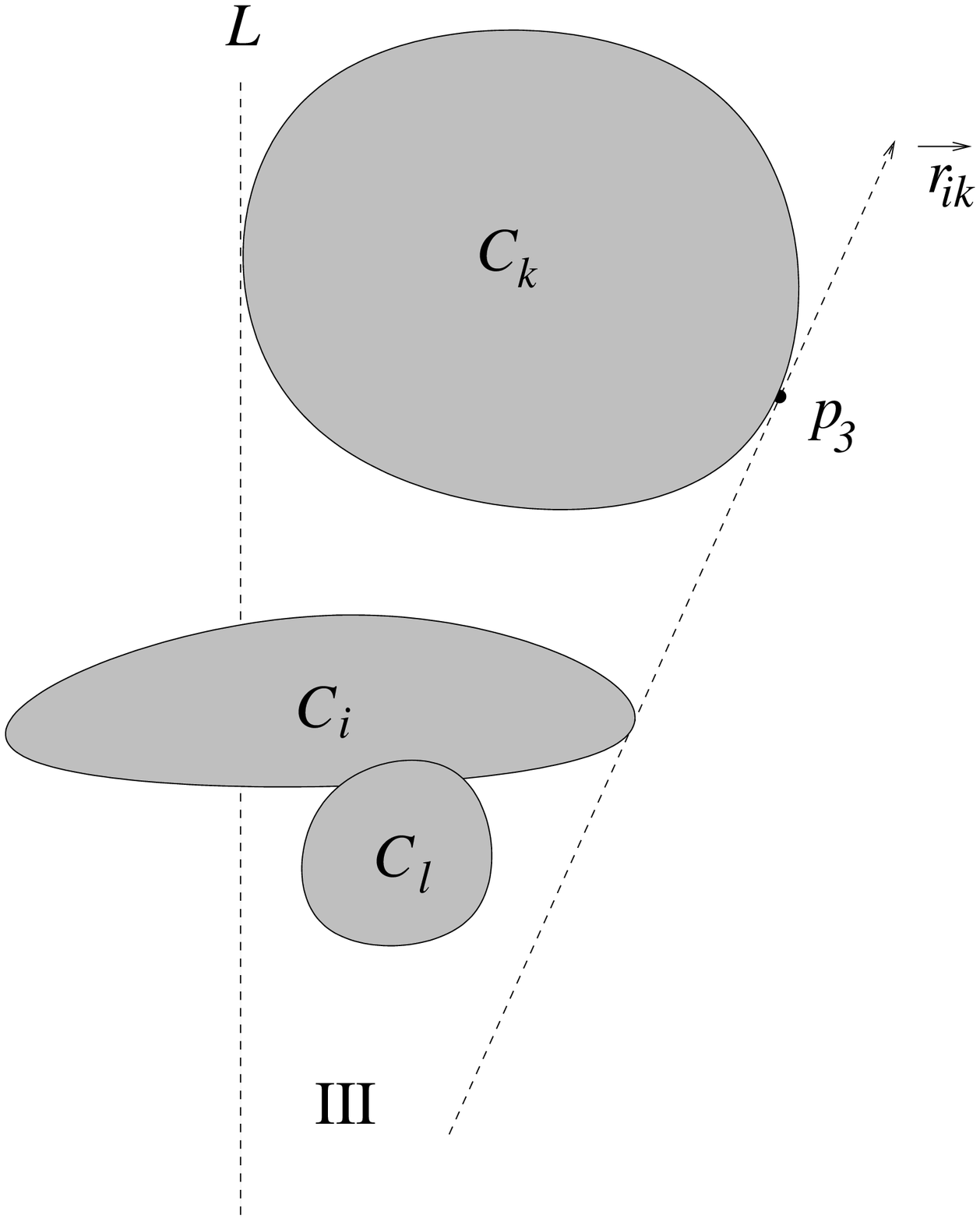}}
                        \caption{ Regions II and III.}
  \label{strongclock12}
\end{figure}

 $\hfill\square$

Now we will show that $(C_i,C_j,C_l)$ has only a strong-clockwise orientation by a very similar argument.

\begin{claim}
\label{ijl}
$(C_i,C_j,C_l)$ has only a strong-clockwise orientation.

 \end{claim}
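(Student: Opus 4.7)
\noindent\textbf{Proof plan for Claim \ref{ijl}.}
The plan is to mimic the proof of Claim \ref{ikl}, but with the roles of $C_k$ and $C_j$ swapped: instead of decomposing the plane around the leftmost point $p_1$ of $C_k$ by the tangent lines $\vec{l}_{jk}, \vec{r}_{ik}, \vec{r}_{jk}$, I will decompose around the leftmost point $p_1'$ of $C_j$ by the analogous tangents $\vec{l}_{ij}, \vec{r}_{ij}$, and the vertical line $L'$ through $p_1'$. The goal is to show that $C_l\setminus C_j$ lies in the region analogous to Region~III, which, via Observation \ref{ciseparates} (or Observation \ref{clockwisevec} when no separation occurs), forces $(C_i,C_j,C_l)$ to have only a strong-clockwise orientation.

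First I would make two reduction assumptions, both analogous to $(\ast)$ in Claim \ref{ikl}. By Observations \ref{bothstrongvec} and \ref{counterclockwisevec}, I may assume $C_l$ lies to the left of $\vec{r}_{ij}$, for otherwise $(C_i,C_j,C_l)$ already lacks a strong-counterclockwise orientation and, since every triple has at least one strong orientation, we are done. Next, I would check that $C_l$ lies to the right of $\vec{l}_{ij}$. For this I would use Claim \ref{ikl}, which I have just proved: $(C_i,C_k,C_l)$ has only a strong-clockwise orientation, so by Observation \ref{clockwisevec} $C_l$ lies to the right of $\vec{l}_{ik}$; combined with the fact that $C_k$ itself lies to the right of $\vec{l}_{ij}$ (from the strong-clockwise orientation of $(C_i,C_j,C_k)$ and Observation \ref{clockwisevec}) and the noncrossing hypothesis, $C_l$ must lie to the right of $\vec{l}_{ij}$ as well.

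With these assumptions, $C_l\setminus C_j$ lies in one of three regions cut out around $C_j$ by $L'$, $\vec{l}_{ij}$, and $\vec{r}_{ij}$, exactly as in the proof of Claim \ref{ikl}. I would then rule out the analogue of Region~I by the same two-case split: if $C_i$ misses that region, a triangle argument forces $C_l\subset \mathrm{conv}(C_i\cup C_j)$, contradicting general position; if $C_i$ meets it, the line through the leftmost point of $C_i$ and its tangency point on $\vec{l}_{ij}$ either buries $C_l$ inside $\mathrm{conv}(C_i\cup C_j)$ or forces, via Observation \ref{ciseparates}, the triple $(C_i,C_j,C_k)$ to have a strong-counterclockwise orientation, both contradictions. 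For the analogue of Region~II, I would argue as in the Region~II step of Claim \ref{ikl}: being in this region places $C_l$ on the wrong side of $\vec{r}_{jk}$ and forces $C_j$ to separate $(C_j,C_k,C_l)$, whence by Observation \ref{bothstrongcj} that triple has both strong orientations, a contradiction.

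Therefore $C_l\setminus C_j$ must lie in the analogue of Region~III. A final triangle argument (if $C_i$ misses Region~III, then $C_l\subset \mathrm{conv}(C_i\cup C_j)$, impossible) shows $C_i$ meets Region~III, and a straight application of Observation \ref{ciseparates}, with the line through the leftmost point of $C_i$ and its tangency point on $\vec{r}_{ij}$, concludes that $(C_i,C_j,C_l)$ has only a strong-clockwise orientation. The main obstacle is the initial bookkeeping to set up the three regions correctly and to confirm that the ``anchor'' tangent lines $\vec{l}_{ij},\vec{r}_{ij},\vec{r}_{jk}$ are configured around the leftmost point of $C_j$ in the same cyclic order as in Observation \ref{p1234}; once that analogue is in place, each of the three regional contradictions is a direct translation of the corresponding step in the proof of Claim \ref{ikl}.
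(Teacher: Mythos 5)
The paper's proof of Claim~\ref{ijl} decomposes the plane around $C_j$ using $L'$, $\vec{l}_{jk}$, and $\vec{r}_{ij}$, whereas you propose to use $L'$, $\vec{l}_{ij}$, and $\vec{r}_{ij}$. This seemingly small change breaks the argument in two places. First, the paper gets the bound ``$C_l$ lies to the right of $\vec{l}_{jk}$'' for free from the hypothesis that $(C_j,C_k,C_l)$ has only a strong-clockwise orientation (Observation~\ref{clockwisevec}); by contrast, your claim that ``$C_l$ lies to the right of $\vec{l}_{ij}$'' is not implied by any hypothesis, and the chain you sketch ($C_l$ right of $\vec{l}_{ik}$, $C_k$ right of $\vec{l}_{ij}$, plus noncrossing) does not transitively give $C_l$ right of $\vec{l}_{ij}$ — these are unrelated tangent lines and the position of $C_l$ on one side of $\vec{l}_{ik}$ carries no information about its position relative to $\vec{l}_{ij}$. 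Second, and more decisively, once you bound regions by $\vec{l}_{ij}$ rather than $\vec{l}_{jk}$, the intended contradictions evaporate: a region cut off by $\vec{l}_{ij}$ records information only about the pair $(C_i,C_j)$, so $C_l$ lying in it cannot say anything about the hypothesis triple $(C_j,C_k,C_l)$. Indeed your Region~I argument ends by claiming a contradiction with ``$(C_i,C_j,C_k)$ having a strong-counterclockwise orientation,'' but the location of $C_l$ cannot possibly change the orientation of the triple $(C_i,C_j,C_k)$, which does not involve $C_l$ at all; and your Region~II argument asserts that being in a region between $\vec{l}_{ij}$ and $\vec{r}_{ij}$ ``places $C_l$ on the wrong side of $\vec{r}_{jk}$,'' which has no justification in your setup (in the paper the corresponding step uses the cyclic order of the tangency points $p_7,p_8$ on $\partial C_j$, which depends on $\vec{l}_{jk}$ being one of the cutting lines).

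There is also a structural omission: the paper's proof of Claim~\ref{ijl} splits into two cases according to whether $(C_i,C_j,C_k)$ is not separable (Observation~\ref{p5678v1}, Regions IV/V/VI) or $C_i$ separates it (Observation~\ref{p5678v2}, Regions VII/VIII), because the cyclic order of the tangency points on $\partial C_j$ is different in the two cases and the region decomposition changes accordingly. Your proposal treats the two situations uniformly and never addresses the case where $C_i$ separates $(C_i,C_j,C_k)$, so even if the tangent-line issues were repaired, one of the two cases would be unhandled.
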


\noindent Just as before, we can assume the following.

 \begin{center}
\begin{tabular}{rl}
($\ast\ast$) & $C_l$ lies to the left of $\vec{r}_{ij}$ and to the right of $\vec{l}_{jk}$.
\end{tabular}
\end{center}

 \noindent Indeed, by Observation \ref{counterclockwisevec}, we can assume that $C_l$ lies to the left of $\vec{r}_{ij}$ since otherwise $(C_i,C_j,C_l)$ would not have a strong-counterclockwise orientation and we would be done, that is, $(C_i,C_j,C_l)$ has only a strong-clockwise orientation since every triple has at least one strong orientation.  Moreover by Observation \ref{clockwisevec}, $C_l$ must lie to the right of $\vec{l}_{jk}$ since $(C_j,C_k,C_l)$ has only a strong-clockwise orientation.

Since $(C_i,C_j,C_k)$ has only a strong-clockwise orientation, then either $(C_i,C_j,C_k)$ is not separable, or $C_i$ separates it.  Now the proof falls into two cases.

\medskip

\noindent{\sc Case 1:}  Assume that $(C_i,C_j,C_k)$ is not separable.    Then we define $L'$ to be the vertical line through the left endpoint of $C_j$, $p_5$ to be the left endpoint of $C_j$, $p_6$ to be the first point on $\vec{l}_{jk}$ from $C_j$ in the direction of $\vec{l}_{jk}$, $p_7$ to be the first point on $\vec{r}_{ij}$ from $C_j$ in the direction of $\vec{r}_{ij}$, and $p_8$ to be the first point on $\vec{r}_{jk}$ from $C_j$ in the direction of $\vec{r}_{jk}$.  For simplicity, we will assume that $p_5,p_6,p_7,p_8$ are distinct, since otherwise it would just make the remaining analysis easier.  Now we make the following observation.

\begin{observation}
\label{p5678v1}
$p_5,p_6,p_7,p_8$ appear in clockwise order along the boundary of $C_j$.

\end{observation}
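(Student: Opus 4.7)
The plan is to adapt the strategy of Observation~\ref{p1234}. First, we use the noncrossing property of $C_j$ and $C_k$ to locate $p_5$, $p_6$, and $p_8$ on $\partial C_j$; then we use the non-separable strong-clockwise hypothesis of Case~1 to pin down $p_7$ on the complementary arc.

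For the first step, since $C_j$ and $C_k$ are noncrossing, $C_j \cap \partial(conv(C_j\cup C_k))$ is a single arc with endpoints $p_6$ (tangent of $\vec{l}_{jk}$) and $p_8$ (tangent of $\vec{r}_{jk}$), namely the arc of $\partial C_j$ facing away from $C_k$. Because $j<k$, the left endpoint of $C_k$ has $x$-coordinate strictly greater than that of $p_5$, so the vertical line through $p_5$ supports $conv(C_j\cup C_k)$ and thus $p_5$ lies on this outer arc. In Case~1 the arcs $\gamma_i, \gamma_j, \gamma_k$ appear on $\partial(conv(C_i\cup C_j\cup C_k))$ in clockwise order, which forces $C_j$ to sit on the upper portion of the hull with $C_k$ to its lower-right (consistent with Observation~\ref{clockwisevec}); in this orientation one verifies that traversing $\partial C_j$ clockwise starting at $p_5$ meets $p_6$ before $p_8$.

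The central step is showing $p_7$ lies in the interior of $conv(C_j\cup C_k)$, so that on $\partial C_j$ the point $p_7$ lies on the complementary inner arc running from $p_6$ clockwise to $p_8$. Suppose for contradiction that $p_7 \in \partial(conv(C_j\cup C_k))$. Then $\vec{r}_{ij}$, the tangent to $C_j$ at $p_7$, must be a supporting line of $conv(C_j\cup C_k)$, forcing $C_k$ to lie on the same (left) side of $\vec{r}_{ij}$ as $C_j$. Since $C_i$ also lies on the left of $\vec{r}_{ij}$ by definition, $\vec{r}_{ij}$ would support $conv(C_i\cup C_j\cup C_k)$ and touch its boundary at two tangent points, one on $C_i$ and the other being $p_7$ on $C_j$; the segment between these two tangent points would then be a portion of $\partial(conv(C_i\cup C_j\cup C_k))$. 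In Case~1, however, the three tangent segments appearing on this boundary lie exactly along $\vec{l}_{ij}$, $\vec{l}_{jk}$, and $\vec{r}_{ik}$, and by the general-position assumption on $\mathcal{C}$, $\vec{r}_{ij}$ coincides with none of these, giving the desired contradiction.

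Combining the two steps, going clockwise around $\partial C_j$ starting at $p_5$ we encounter $p_6$ (endpoint of the outer arc), then $p_7$ (on the inner arc), then $p_8$ (the other endpoint), establishing the claimed clockwise order $p_5, p_6, p_7, p_8$. The main difficulty is the supporting-line argument in the second step: converting the global structural information about which tangent lines can appear as boundary segments of the three-body hull into the local placement of $p_7$ on $\partial C_j$.
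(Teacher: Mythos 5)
Your proof is correct and takes essentially the same approach as the paper: both establish the clockwise order of $p_5, p_6, p_8$ from the noncrossing structure of $conv(C_j\cup C_k)$, and both then place $p_7$ on the inner arc by showing $\vec{r}_{ij}$ cannot support $conv(C_j\cup C_k)$ because $C_k$ cannot lie entirely to its left given that $(C_i,C_j,C_k)$ is non-separable with only a strong-clockwise orientation. The paper states this last point directly, whereas you phrase it as a contradiction routed through the edge structure of $\partial\,conv(C_i\cup C_j\cup C_k)$, but the underlying geometric fact is identical.
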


\noindent \textbf{Proof.}  Let $\partial$ denote the boundary of $conv(C_j\cup C_k)$.  Then $C_j\cap \partial$ is the arc generated by moving along the boundary of $C_j$ from $p_8$ to $p_6$ in the clockwise direction.  Since $p_5$ is the leftmost point of $conv(C_j\cup C_k)$, $p_5 \in C_j\cap  \partial$.  Hence $p_5,p_6,p_8$ appear in clockwise order along the boundary of $C_j$.  Therefore it suffices to show that $p_6,p_7,p_8$ appear in clockwise order.  Since $(C_i,C_j,C_k)$ is not separable and has a strong-clockwise orientation, $C_k$ does not lie completely to the left of $\vec{r}_{ij}$.  Since $p_6,p_7,p_8$ are all distinct, $p_7$ lies in the interior of $conv(C_j\cup C_k)$.  Thus $p_6,p_7,p_8$ appear in clockwise order along the boundary of $C_j$ and this completes the proof.

$\hfill\square$

By $(\ast\ast)$ and the ordering on $\mathcal{C}$, $C_l$ must lie to the right of $L'$, to the left of $\vec{r}_{ij}$, and to the right of $\vec{l}_{jk}$.  Therefore $C_l\setminus C_j$ must lie in one of the three regions defined as follows.  Let \emph{Region IV} be the region enclosed by $\vec{l}_{jk}$, $L'$, and the arc generated by moving from $p_5$ to $p_6$ along the boundary of $C_j$ in the clockwise direction, whose interior is disjoint from $C_j$.  Let \emph{Region V} be the region enclosed by $\vec{l}_{jk}$, $\vec{r}_{ij}$, and the arc generated by moving from $p_6$ to $p_7$ along the boundary of $C_j$ in the clockwise direction, whose interior is disjoint from $C_j$.  Let \emph{Region VI} be the region enclosed by $L'$, $\vec{r}_{ij}$, and the arc generated by moving from $p_7$ to $p_5$ along the boundary of $C_j$ in the clockwise direction, whose interior is disjoint from $C_j$.  See Figures \ref{R4t} and \ref{R5t}.

 \begin{figure}[h]
  \centering
\subfigure[$(C_j,C_k,C_l)$ separable.]{\label{R4t}\includegraphics[width=0.25\textwidth]{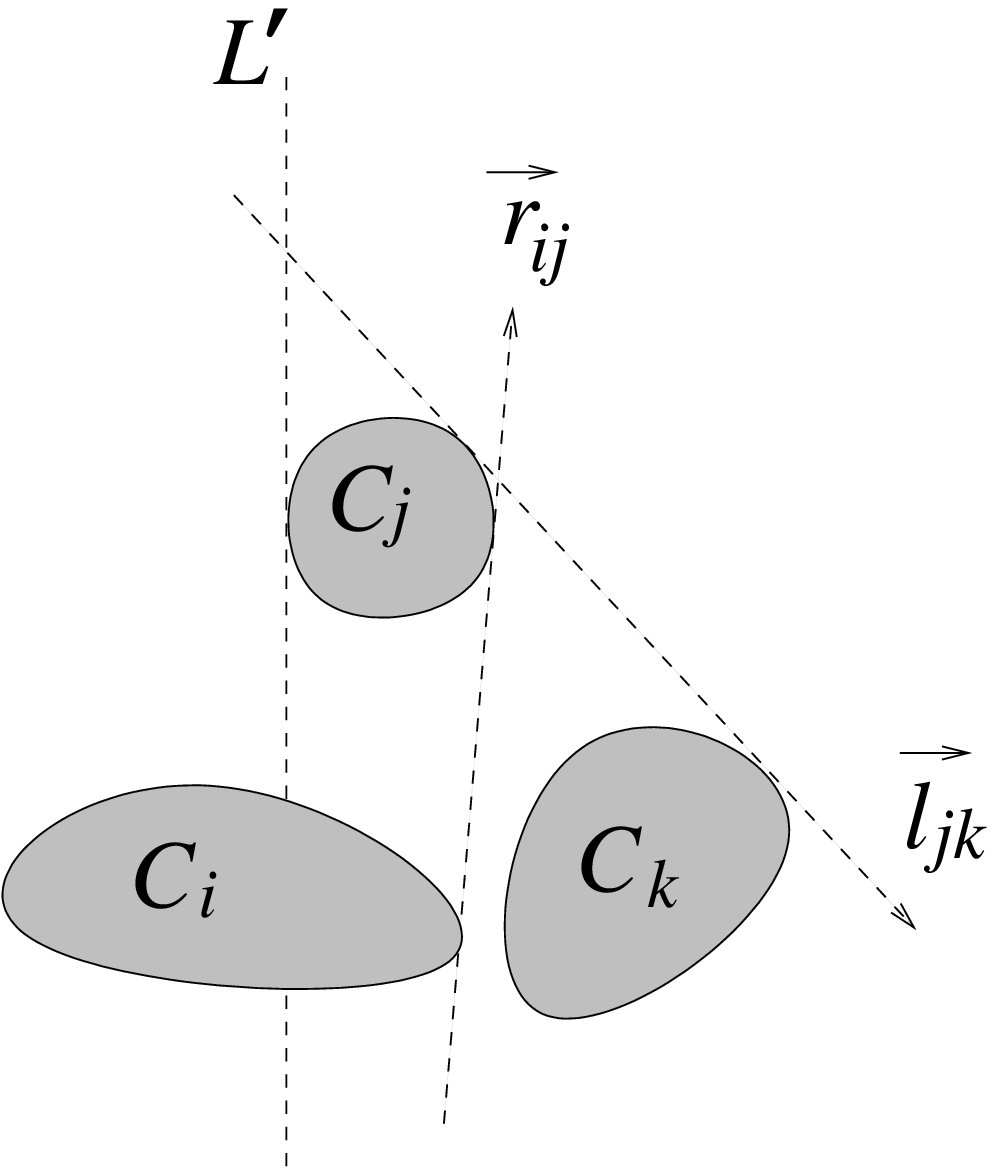}}\hspace{1cm}
                        \subfigure[Regions IV, V, and VI.]{\label{R5t}\includegraphics[width=0.3\textwidth]{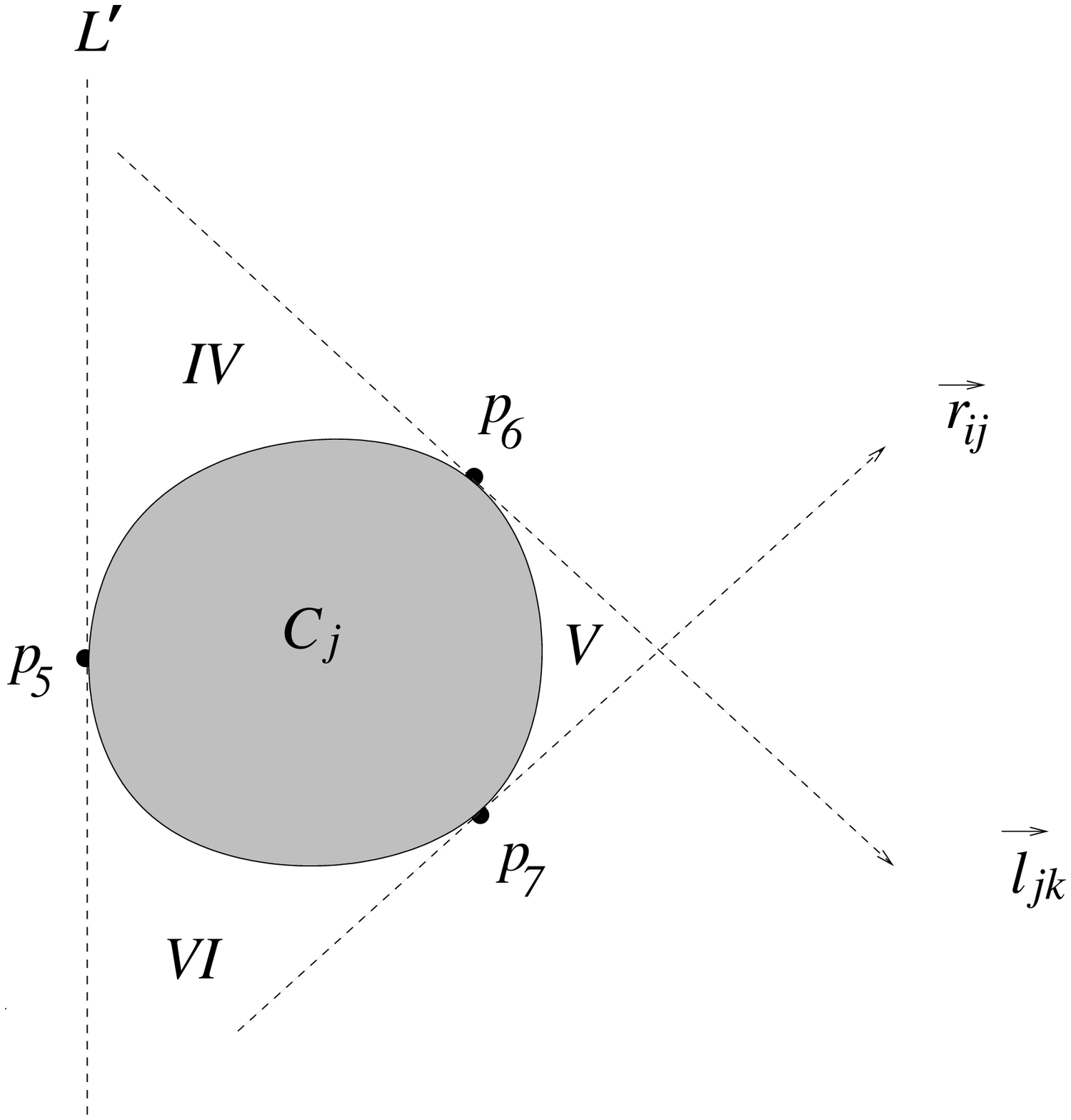}}
                        \caption{Regions IV, V, and VI.}
  \label{ijk12}
\end{figure}

\begin{observation}

$C_l\setminus C_j$ cannot lie in Region IV

 \end{observation}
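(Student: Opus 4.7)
My plan is to mirror the two-case structure used in Observation \ref{region1} for Region I, adapted to the geometry of Region IV. Assume for contradiction that $C_l \setminus C_j$ is contained in Region IV, and divide into cases based on whether $C_i$ intersects Region IV.

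In the first case, where $C_i$ does not intersect Region IV, I exploit the fact that Region IV's arc boundary (the upper arc of $C_j$ from $p_5$ to $p_6$) contains the tangency point $p_{ij}$ of the upper common tangent $\vec{l}_{ij}$ on $C_j$, since this tangency point lies on the clockwise portion of $C_j$'s boundary between $p_5$ and $p_6$. Because $\vec{l}_{ij}$ extends from $p_{ij}$ toward $C_i$, the non-intersection of $C_i$ with Region IV forces $C_i$'s tangency point on $\vec{l}_{ij}$ to lie to the left of $L'$. I then construct a triangle with one vertex on $C_i$ (the tangency point) and two vertices on $C_j$ (namely $p_5$ and $p_{ij}$) whose interior contains Region IV. This triangle lies in $conv(C_i \cup C_j)$, so that $C_l \subset (\text{Region IV} \cup C_j) \subset conv(C_i \cup C_j)$, contradicting the general position assumption applied to the triple $(C_i, C_j, C_l)$.

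In the second case, where $C_i$ does intersect Region IV, I introduce a line $h$ through $C_i$'s leftmost point and the tangency point on $C_i \cap \vec{l}_{ij}$ (analogous to the line $h$ used in Case 2 of Observation \ref{region1}). Depending on which side of $h$ the set $C_l \setminus C_i$ lies, I either obtain $C_l \subset conv(C_i \cup C_j)$, again contradicting general position of $(C_i, C_j, C_l)$, or conclude via Observation \ref{ciseparates} that $C_i$ separates $(C_i, C_j, C_l)$ and confers upon it a strong-counterclockwise orientation, contradicting the goal of Claim \ref{ijl}.

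The main obstacle will be verifying rigorously that the triangle in the first case really does contain Region IV in its entirety. In Observation \ref{region1}, the bounding tangent $\vec{l}_{jk}$ of Region I naturally joined the two bodies ($C_j$ and $C_k$) whose general position was ultimately violated, which made the triangle's geometry transparent. Here the bounding tangent $\vec{l}_{jk}$ of Region IV does not involve $C_i$ at all, so the argument instead hinges on the different tangent $\vec{l}_{ij}$, which passes through the arc boundary of Region IV at the point $p_{ij}$. Ensuring that this tangent, combined with $L'$ and the chord from $p_5$ to $p_{ij}$, encloses all of Region IV requires a careful treatment of how $\vec{l}_{ij}$ sits relative to $\vec{l}_{jk}$ in the given configuration, using in particular that $(C_i,C_j,C_k)$ is not separable and has only a strong-clockwise orientation.
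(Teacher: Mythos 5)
Your proposal takes a fundamentally different route from the paper's argument, and your Case 1 has a genuine gap. The paper's proof is short and does not split into cases on $C_i$ at all: if $C_l \setminus C_j$ lies in Region IV, then $C_l$ is wedged between the arc of $C_j$ from $p_5$ to $p_6$ and $\vec{l}_{jk}$, while $C_k$ is attached to $C_j$ at $p_6$ and $p_8$; since $p_5, p_6, p_8$ are in clockwise order, $C_j$ separates the triple $(C_j,C_k,C_l)$, and since $C_l \setminus C_j$ lies in the upper half-plane of the chord $p_5p_6$, Observation \ref{ciseparates} forces $(C_j,C_k,C_l)$ to have only a strong-counterclockwise orientation, contradicting the hypothesis. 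No appeal to $C_i$, $\vec{l}_{ij}$, or $conv(C_i\cup C_j)$ is needed.

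Your Case 1 aims to trap Region IV inside $conv(C_i\cup C_j)$ using a triangle with vertices $p_5$, $p_{ij}$, and the tangency of $\vec{l}_{ij}$ on $C_i$. This cannot work. Because $(C_i,C_j,C_k)$ is not separable and has only a strong-clockwise orientation, the clockwise arc of $C_j$ from $p_{ij}$ to $p_7$ lies on the boundary of $conv(C_i\cup C_j)$, and this arc passes through $p_6$. Consequently, the part of Region IV adjacent to the sub-arc from $p_{ij}$ to $p_6$ lies strictly outside $conv(C_i\cup C_j)$, and no triangle contained in $conv(C_i\cup C_j)$ can cover it. General position guarantees $p_{ij}\ne p_6$ (otherwise $\vec{l}_{ij}=\vec{l}_{jk}$ would be a common tangent to $C_i$, $C_j$, $C_k$), so this portion of Region IV is nonempty; the obstacle you flagged at the end is not a technicality to be patched but a real obstruction. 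The correct triple to analyze is $(C_j,C_k,C_l)$, not $(C_i,C_j,C_l)$, because Region IV is delimited by $\vec{l}_{jk}$ rather than by any tangent involving $C_i$.
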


\noindent \textbf{Proof.} For sake of contradiction, assume that $C_l\setminus C_j$ does lie in Region IV.  Since $p_5,p_6,p_8$ appear in clockwise order along the boundary of $C_j$, $C_j$ separates the triple $(C_j,C_k,C_l)$.  Moreover, $C_l\setminus C_j$ lies in the upper half-plane generated by the line that goes through $p_5$ and $p_6$.  By Observation \ref{ciseparates}, $(C_j,C_k,C_l)$ has only a strong-counterclockwise orientation and we have a contradiction.  See Figure \ref{R4}.

 $\hfill\square$

 \begin{observation}

 $C_l\setminus C_j$ cannot lie in Region V.

  \end{observation}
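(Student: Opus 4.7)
My plan is to proceed by contradiction, mirroring the Case 1 analysis of Observation \ref{region1}. Assume $C_l \setminus C_j$ lies in Region V; the goal is to show this forces $C_l \subset \mathrm{conv}(C_j \cup C_k)$, contradicting the first condition of general position applied to $\{C_j, C_k, C_l\}$.

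Region V has three ``vertices'': the tangent points $p_6, p_7 \in \partial C_j$ and the intersection point $q$ of the rays along $\vec{l}_{jk}$ (starting at $p_6$ heading toward $C_k$) and along $\vec{r}_{ij}$ (starting at $p_7$ continuing past $C_j$). Because $C_j$ is convex, the arc of $\partial C_j$ from $p_6$ to $p_7$ clockwise bulges outward from $C_j$ but still lies inside the straight triangle $T$ with vertices $p_6, p_7, q$; in particular, Region V is contained in $T$. I would verify that all three vertices of $T$ lie in $\overline{\mathrm{conv}(C_j \cup C_k)}$: the points $p_6, p_7$ lie on $\partial C_j$, and the key geometric claim is that $q$ lies on the tangent segment of $\vec{l}_{jk}$ between its tangency points on $C_j$ and on $C_k$, which is part of $\partial \mathrm{conv}(C_j \cup C_k)$. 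Then by convexity $T \subset \overline{\mathrm{conv}(C_j \cup C_k)}$, so $C_l \setminus C_j \subset T \subset \overline{\mathrm{conv}(C_j \cup C_k)}$, and together with $C_l \cap C_j \subset C_j \subset \mathrm{conv}(C_j \cup C_k)$, the convex body $C_l$ lies in $\mathrm{conv}(C_j \cup C_k)$, as desired.

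The main obstacle is verifying the position of $q$: if $q$ lay beyond the tangent point of $\vec{l}_{jk}$ on $C_k$ (on the far side from $p_6$), then $q$ would lie outside $\overline{\mathrm{conv}(C_j \cup C_k)}$ and the triangle argument would fail. Under the Case 1 assumptions that $(C_i,C_j,C_k)$ is not separable and has only a strong-clockwise orientation, the arrangement of $C_i$ relative to $C_j$ and $C_k$ constrains the direction of $\vec{r}_{ij}$ enough that the two rays meet before $\vec{l}_{jk}$ leaves $\overline{\mathrm{conv}(C_j \cup C_k)}$. If a degenerate configuration of $C_i$ does force $q$ into the wrong position, I would fall back on a case split analogous to the two cases in Observation \ref{region1}, either replacing $\mathrm{conv}(C_j \cup C_k)$ by $\mathrm{conv}(C_i \cup C_k)$ to trap $C_l$, or applying Observation \ref{ciseparates} to $(C_i, C_j, C_l)$ to derive an orientation of $(C_j,C_k,C_l)$ or $(C_i,C_j,C_k)$ that contradicts the standing strong-clockwise-only assumptions.
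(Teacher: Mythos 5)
Your approach is genuinely different from the paper's, and it has a gap exactly where you flag the ``main obstacle.'' You try to trap $C_l$ inside $\mathrm{conv}(C_j \cup C_k)$, which hinges on locating the apex $q$ of Region V on the correct portion of $\vec{l}_{jk}$. But nothing in the Case~1 hypotheses controls where $\vec{r}_{ij}$ crosses $\vec{l}_{jk}$ relative to the tangency point of $\vec{l}_{jk}$ on $C_k$: if $C_k$ is small or close to $C_j$, or if $C_i$ is positioned so that $\vec{r}_{ij}$ is nearly parallel to $\vec{l}_{jk}$, the point $q$ can lie well beyond $C_k$, so Region~V spills out of $\mathrm{conv}(C_j\cup C_k)$ and your triangle argument does not close. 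Your proposed fallback (``derive an orientation contradiction'') is not carried out, and it is precisely there that the real work lies.

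The paper argues in the opposite direction. It takes as given from general position that $C_l\not\subset\mathrm{conv}(C_j\cup C_k)$ (so the containment you are trying to establish is actually impossible and is never the target), and instead shows that in Region~V the body $C_l$ is pinned between $\vec{l}_{jk}$ (from $(\ast\ast)$) and $\vec{r}_{jk}$: the latter follows because $C_l$ is to the left of $\vec{r}_{ij}$ and, by Observation~\ref{p5678v1}, $p_7,p_8$ appear in clockwise order on $\partial C_j$. Being squeezed into the $\vec{l}_{jk}$--$\vec{r}_{jk}$ strip without being inside $\mathrm{conv}(C_j\cup C_k)$ forces $C_k$ to separate $(C_j,C_k,C_l)$, and then Observation~\ref{bothstrongcj} gives both strong orientations for $(C_j,C_k,C_l)$, contradicting the standing strong-clockwise-only assumption. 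That route never needs to locate $q$, which is why it is robust; to repair your proof you would essentially have to reproduce this separation argument as the fallback branch.
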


\noindent \textbf{Proof.}  For sake of contradiction, assume that $C_l\setminus C_j$ does lie in Region V.  Since $C_l$ lies to the left of $\vec{r}_{ij}$ and since $p_7, p_8$ appear in clockwise order along the boundary of $C_j$, we can conclude that $C_l$ lies to the left of $\vec{r}_{jk}$.  Since $C_l\not\subset conv(C_j\cup C_k)$, $C_k$ separates the triple $(C_j,C_k,C_l)$.  By Observation \ref{bothstrongcj}, $(C_j,C_k,C_l)$ has both strong orientations and we have a contradiction.  See Figure \ref{R5}.

   $\hfill\square$

  Therefore $C_l\setminus C_j$ must lie in Region VI.  Suppose for contradiction that $C_i$ does not intersect Region VI. Then the tangent point of $C_i$ on $\vec{r}_{ij}$ lies to the left of $L'$. Therefore, the triangle $T_3$ whose vertices is this tangent point, $p_5$, and $p_7$ contains Region VI. Since the vertices of $T_3$ are in $C_i \cup C_j$, the triangle $T_3$ is a subset of the convex hull $conv(C_i \cup C_j)$. Hence,
$$C_l \subset (\textnormal{Region VI}\cup C_j) \subset (T_3 \cup C_j) \subset conv(C_i\cup C_j),$$
\noindent which is a contradiction. Therefore $C_i$ must intersect Region VI and $C_l\setminus C_i$ must lie in the lower half-plane generated by the line that goes through the left endpoint of $C_i$ and a point on $C_i\cap \vec{r}_{ij}$.  Therefore $C_i$ separates $(C_i,C_j,C_l)$ and by Observation \ref{ciseparates}, $(C_i,C_j,C_l)$ has only a strong-clockwise orientation.  See Figure \ref{R6}.

 \begin{figure}[h]
  \centering
\subfigure[In Region IV, $(C_j,C_k,C_l)$ has a strong-counterclockwise orientation.]{\label{R4}\includegraphics[width=0.28\textwidth]{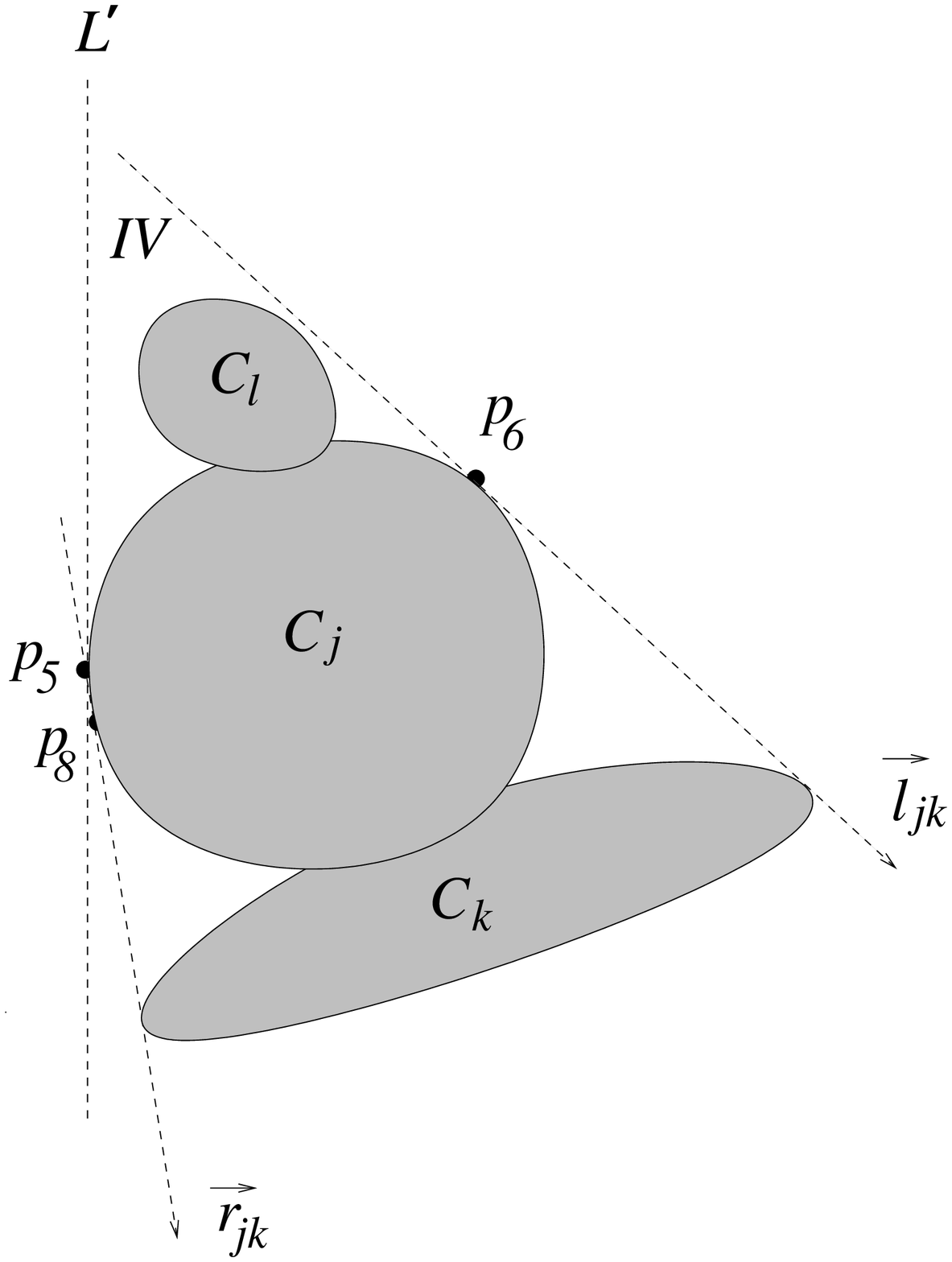}}\hspace{1cm}
                        \subfigure[In Region V, $C_l \subset conv(C_j\cup C_k)$.]{\label{R5}\includegraphics[width=0.34\textwidth]{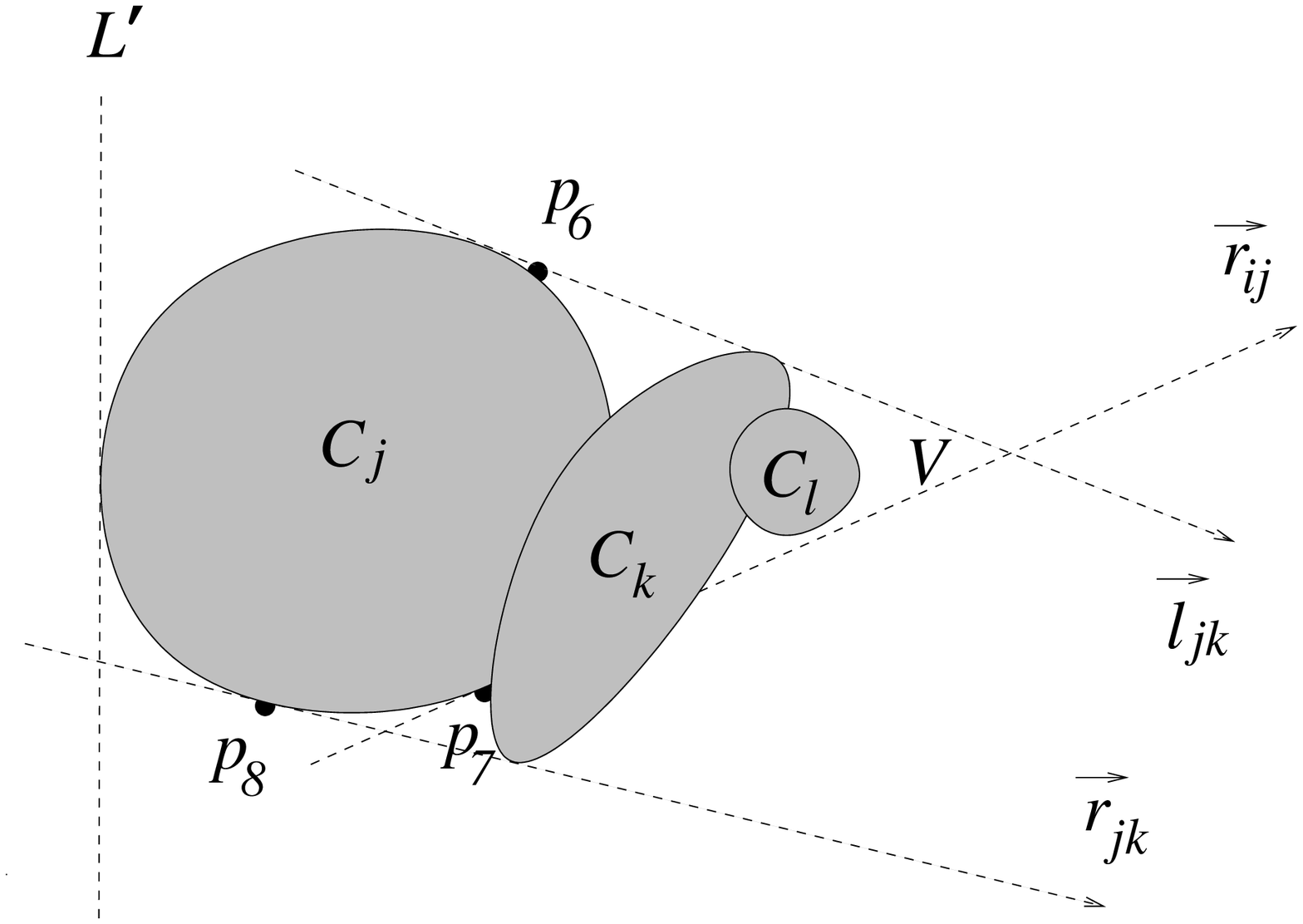}}\hspace{1cm}
                         \subfigure[In Region VI, $(C_i,C_j,C_l)$ has only a strong-clockwise orientation.]{\label{R6}\includegraphics[width=0.2\textwidth]{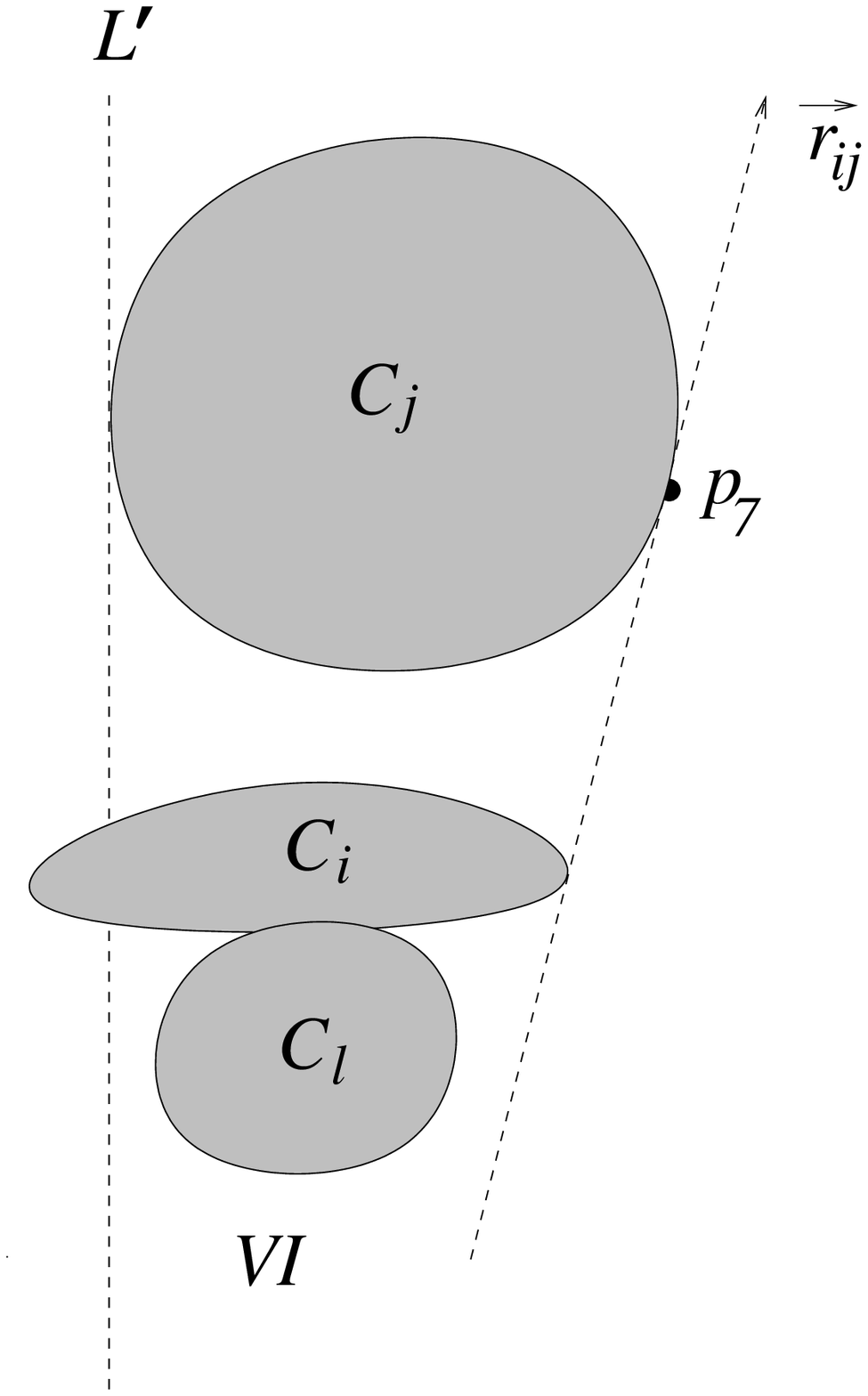}}
                        \caption{Regions IV, V, and VI.}
  \label{ijk12}
\end{figure}

\medskip

\noindent{\sc Case 2:}  Assume that $C_i$ separates $(C_i,C_j,C_k)$.

 \begin{observation}
 \label{p5678v2}
 $p_5,p_7,p_6$ appear in clockwise order along the boundary of $C_j$.

 \end{observation}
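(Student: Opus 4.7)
\noindent\textbf{Proof proposal for Observation~\ref{p5678v2}.} The plan is to follow the template of the proof of Observation~\ref{p5678v1}, but to use the geometric consequences of $C_i$ separating the triple $(C_i,C_j,C_k)$ (rather than the triple being non-separable) to deduce that $p_7$ now lies between $p_5$ and $p_6$ in the clockwise direction along $\partial C_j$, instead of between $p_6$ and $p_8$.

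First I would establish the clockwise order $p_5, p_6, p_8$ exactly as in Observation~\ref{p5678v1}: letting $\partial$ denote the boundary of $\operatorname{conv}(C_j\cup C_k)$, the arc $C_j\cap\partial$ runs clockwise from $p_8$ to $p_6$ along $\partial C_j$, and since $p_5$ is the leftmost point of $C_j$ it lies on $C_j\cap\partial$, so $p_5, p_6, p_8$ appear in clockwise order.

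Next I would pin down where $p_7$ sits. Since $(C_i,C_j,C_k)$ has only a strong-clockwise orientation and is separated by $C_i$, Observation~\ref{ciseparates} gives a line $h$ through the left endpoint $q_i^*$ of $C_i$ with $C_j\setminus C_i$ in the upper half-plane and $C_k\setminus C_i$ in the lower half-plane. Consequently, the arcs on the boundary of $\operatorname{conv}(C_i\cup C_j\cup C_k)$ appear clockwise in the order $\gamma_{i_1}, \gamma_j, \gamma_{i_2}, \gamma_k$, so from $C_j$'s point of view the body $C_i$ lies on the opposite side from $C_k$. This is the key structural difference from the non-separable Case~1: the tangent $\vec{r}_{ij}$, whose defining property is that $C_i\cup C_j$ lies to its left, now must touch $C_j$ on the upper-left portion of its boundary (the portion facing $\gamma_{i_1}$), rather than on the lower-right portion (as in Case~1, when $C_k$ lay to the right of $\vec{r}_{ij}$). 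Formally, I would argue that if $p_7$ lay on the clockwise arc from $p_6$ to $p_5$, then the half-plane bounded by $\vec{r}_{ij}$ containing $C_i\cup C_j$ would have to contain a portion of $C_k\setminus C_i$ (using the clockwise arrangement $\gamma_{i_1},\gamma_j,\gamma_{i_2},\gamma_k$ together with noncrossingness and the general position hypotheses), contradicting the fact that $C_k$ crosses to the other side of $C_i$. Therefore $p_7$ lies on the clockwise arc from $p_5$ to $p_6$, which yields the claimed order $p_5, p_7, p_6$.

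The main obstacle will be turning the intuitive statement ``$p_7$ must lie between $p_5$ and $p_6$'' into a rigorous exclusion of the alternative using only the separating line $h$, the noncrossing property, and Observations~\ref{cknotseparate}--\ref{counterclockwisevec}. In particular, one must rule out the degenerate configurations where $\vec{r}_{ij}$ passes through $C_k$ without violating noncrossingness, and verify (as in Observation~\ref{p1234}) that the four points $p_5,p_6,p_7,p_8$ are distinct so the ordering assertion is meaningful.
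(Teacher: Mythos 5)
Your proposal diverges from the paper's argument in a way that leaves a real gap, one you acknowledge yourself in the final paragraph.

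Your first step (establishing the clockwise order $p_5, p_6, p_8$ on $\partial C_j$ via $\partial\,\operatorname{conv}(C_j\cup C_k)$, mimicking Observation~\ref{p5678v1}) is extraneous: $p_8$ does not appear in the statement of Observation~\ref{p5678v2}, and knowing where $p_8$ sits does nothing to pin down $p_7$. The real work is locating $p_7$, and here your argument is a sketch, not a proof. You write ``Formally, I would argue that if $p_7$ lay on the clockwise arc from $p_6$ to $p_5$, then the half-plane bounded by $\vec{r}_{ij}$ containing $C_i\cup C_j$ would have to contain a portion of $C_k\setminus C_i$, \dots, contradicting the fact that $C_k$ crosses to the other side of $C_i$.'' But this is not an actual contradiction as stated: the defining property of $\vec{r}_{ij}$ constrains only $C_i\cup C_j$, not $C_k$; nothing in Observation~\ref{ciseparates} forbids $C_k$ from lying in the half-plane to the left of $\vec{r}_{ij}$. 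Deriving a genuine contradiction would require relating the line $\vec{r}_{ij}$ to the separating line $h$ through $q_i^*$, and the noncrossing/general-position hypotheses, in a way you have not spelled out. You correctly flag this as the main obstacle, which means the proposal is an outline rather than a proof.

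The paper takes a cleaner route that sidesteps all of this. It works with $\beta=\partial\,\operatorname{conv}(C_i\cup C_j)$ (not $\operatorname{conv}(C_j\cup C_k)$ or $\operatorname{conv}(C_i\cup C_j\cup C_k)$). Since $p_7$ is by definition the tangent point of $\vec{r}_{ij}$ on $C_j$, it automatically lies on $C_j\cap\beta$. The hypothesis that $C_i$ separates $(C_i,C_j,C_k)$ then forces $C_j\cap\beta$ to be contained in the clockwise arc of $\partial C_j$ from $p_5$ to $p_6$, and the conclusion is immediate. This one-line containment is precisely the clean replacement for the case analysis you were anticipating: it uses only the separation hypothesis and the definition of $p_7$, with no need to track $C_k$, the line $h$, or the four arcs $\gamma_{i_1},\gamma_j,\gamma_{i_2},\gamma_k$ at all.
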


 \noindent \textbf{Proof.}  Let $\beta$ denote the boundary of $conv(C_i\cup C_j)$.  Since $C_i$ separates $(C_i,C_j,C_k)$, the arc generated by moving along the boundary of $C_j$ from $p_5$ to $p_6$ in the clockwise direction must contain $C_j\cap \beta$.  Since $p_7 \in C_j\cap \beta$, $p_5,p_7,p_6$ appear in clockwise order along the boundary of $C_j$.

 $\hfill\square$

By $(\ast\ast)$ and the ordering on $\mathcal{C}$, $C_l$ must lie to the right of $L'$ and to the right of $\vec{l}_{jk}$.  Hence $C_l\setminus C_j$ must lie in one of the following two regions.  Let \emph{Region VII} be the region enclosed by $\vec{l}_{jk}$, $L'$, and the arc generated by moving from $p_5$ to $p_6$ along the boundary of $C_j$ in the clockwise direction, whose interior is disjoint from $C_j$.  Finally let \emph{Region VIII} be the region enclosed by $L'$, $\vec{l}_{jk}$, and the arc generated by moving from $p_6$ to $p_5$ along the boundary of $C_j$ in the clockwise direction, whose interior is disjoint from $C_j$.  See Figures \ref{ijk2} and \ref{R789}.

 \begin{figure}[h]
  \centering
\subfigure[$C_i$ separates $(C_j,C_k,C_l)$.]{\label{ijk2}\includegraphics[width=0.2\textwidth]{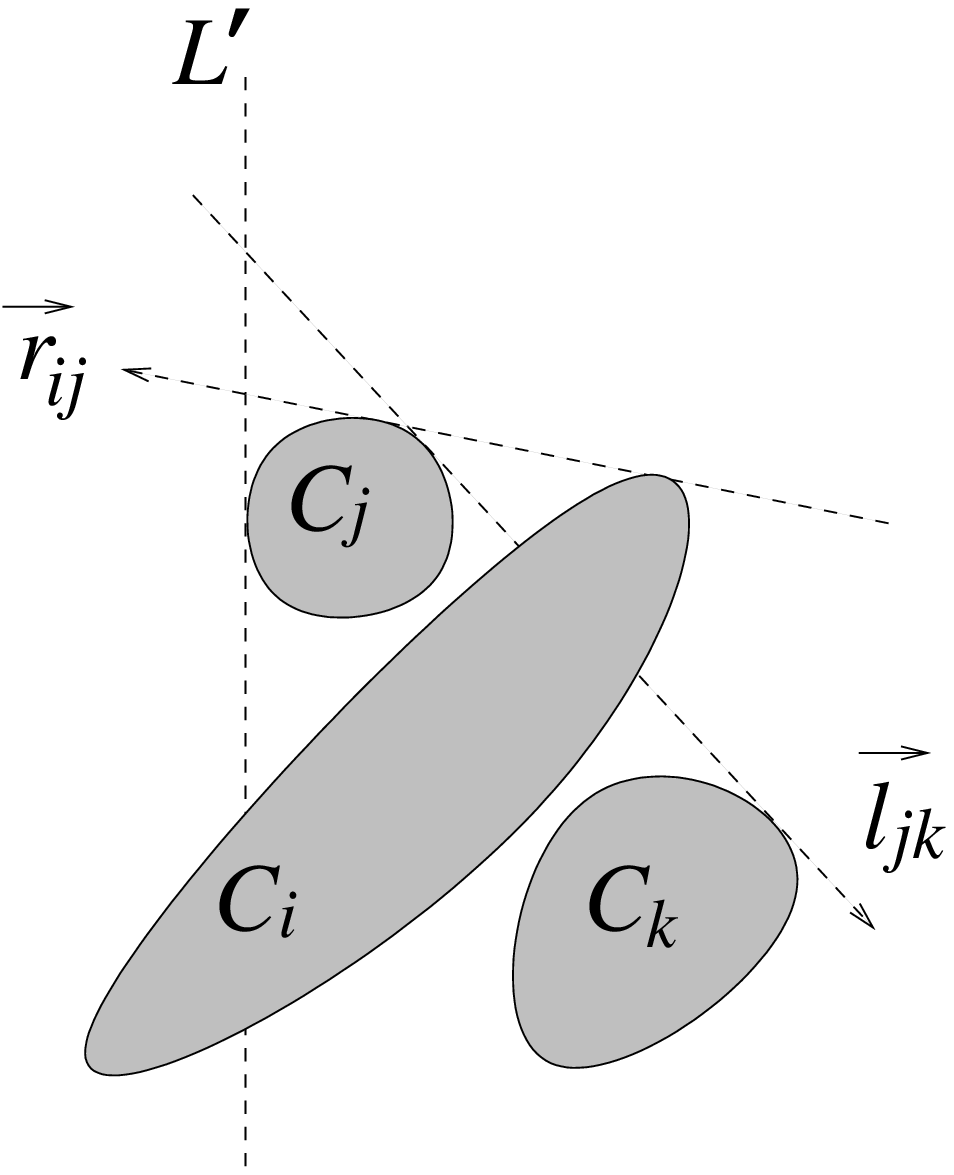}}\hspace{2cm}
                        \subfigure[Regions VII and VIII.]{\label{R789}\includegraphics[width=0.25\textwidth]{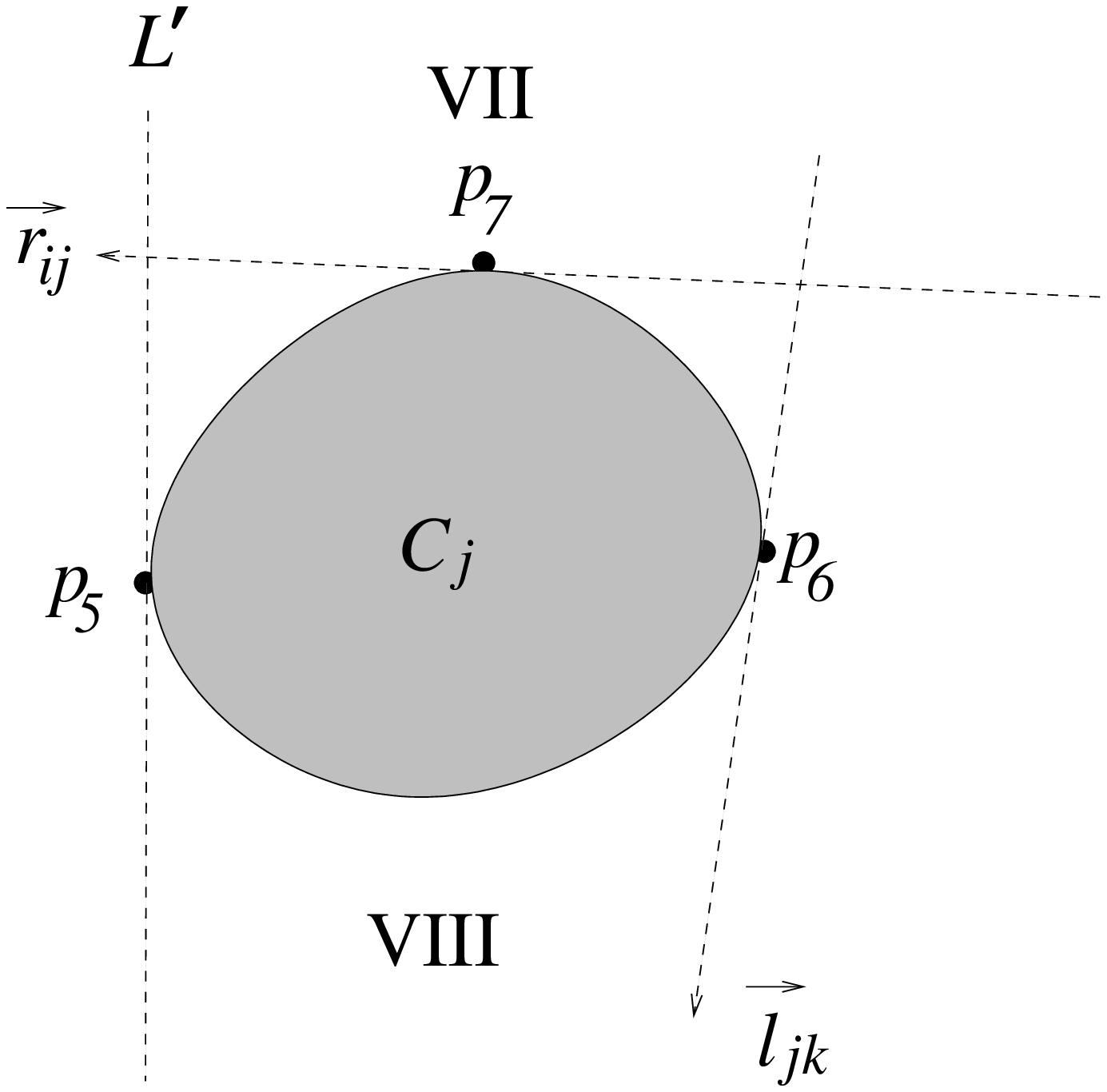}}
                        \caption{$C_i$ separates $(C_i,C_j,C_k)$.}
  \label{fR789}
\end{figure}

\begin{observation}
$C_l\setminus C_j$ cannot lie in Regions VII.
\end{observation}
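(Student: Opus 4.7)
The argument mirrors the exclusion of Region IV in Case 1 almost verbatim: I would suppose for contradiction that $C_l\setminus C_j$ lies in Region VII, and derive that the triple $(C_j,C_k,C_l)$ has only a strong-counterclockwise orientation, contradicting our standing assumption in Case 2 that $(C_j,C_k,C_l)$ has only a strong-clockwise orientation.

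The first (and main) step is to show that $C_j$ separates $(C_j,C_k,C_l)$. Region VII is bounded by the arc of $\partial C_j$ traced clockwise from $p_5$ to $p_6$ (by Observation \ref{p5678v2} this is a genuine sub-arc of $\partial C_j$), by the vertical line $L'$ through $p_5$, and by $\vec{l}_{jk}$, and its interior lies outside $C_j$. Because $C_j\cup C_k$ lies to the right of $\vec{l}_{jk}$, the body $C_k$ sits on the side of $\vec{l}_{jk}$ opposite to Region VII; consequently $C_l\setminus C_j$ and $C_k\setminus C_j$ lie on opposite sides of $C_j$ with respect to the chord joining $p_5$ and $p_6$. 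Moreover, since $C_l$ lies strictly below $\vec{l}_{jk}$ by the definition of Region VII, the line $\vec{l}_{jk}$ remains a supporting line of $conv(C_j\cup C_k\cup C_l)$. Hence the boundary $\beta=\partial conv(C_j\cup C_k\cup C_l)$ meets $C_j$ in two disjoint arcs: one passing through (a neighborhood of) $p_5$, adjacent to $C_l$, and one passing through $p_6$, adjacent via the supporting tangent $\vec{l}_{jk}$ to $C_k$. This is exactly the statement that $C_j$ separates $(C_j,C_k,C_l)$.

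With separation established, I would apply Observation \ref{ciseparates} to the triple $(C_j,C_k,C_l)$, with $C_j$ playing the role of the separating body whose leftmost point is $p_5$. Take $h$ to be the (non-vertical) line through $p_5$ and $p_6$; since $p_6\in\gamma_{i_2}$, this is a legitimate choice for $h$. Region VII lies entirely in the upper half-plane determined by $h$, so $C_l\setminus C_j$ is in the upper half-plane, while $C_k\setminus C_j$ is in the lower half-plane (since $C_k$ sits to the lower-right of $C_j$, beyond $p_6$ along $\vec{l}_{jk}$). Observation \ref{ciseparates} then yields that $(C_j,C_k,C_l)$ has only a strong-counterclockwise orientation, contradicting our hypothesis and completing the proof.

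The delicate point is the separation step: one must be sure that the boundary $\beta$ really does meet $C_j$ in two disjoint arcs rather than in a single arc that happens to go around both sides of $C_l$. This is where the noncrossing hypothesis and the precise containment $C_l\setminus C_j\subset$ Region VII are both essential; the former prevents $C_l$ and $C_j$ from intertwining in uncontrolled ways, and the latter ensures that $\vec{l}_{jk}$ remains a supporting line of the enlarged convex hull.
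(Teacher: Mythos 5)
Your proof follows the same route as the paper: assume $C_l\setminus C_j$ lies in Region VII, show that $C_j$ separates $(C_j,C_k,C_l)$ with $C_l\setminus C_j$ in the upper half-plane of the line through $p_5$ and $p_6$, and invoke Observation \ref{ciseparates} to obtain a strong-counterclockwise orientation of $(C_j,C_k,C_l)$, contradicting the Case~2 hypothesis.

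One slip in your expanded separation argument: you claim that $C_k$ sits on the side of $\vec{l}_{jk}$ opposite to Region VII, but this is false. By ($\ast\ast$), $C_l$ lies to the right of $\vec{l}_{jk}$, so Region VII is on that side as well --- the same side as $C_j\cup C_k$ --- and $\vec{l}_{jk}$ does not separate $C_k$ from Region VII. Your conclusion that $C_j$ separates the triple is nonetheless correct (and the paper asserts it without further justification); a cleaner reason is that $C_l\setminus C_j$ is pinned between $L'$ and the clockwise arc of the boundary of $C_j$ from $p_5$ to $p_6$, so $C_l$ cannot reach $C_k$ without passing through $C_j$, while $\vec{l}_{jk}$ remains a supporting line of $conv(C_j\cup C_k\cup C_l)$ at $p_6$. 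Consequently the bodies appear on the boundary of the hull in cyclic order $C_j,C_l,C_j,C_k$, i.e., $C_j$ occupies two disjoint arcs of the boundary, which is exactly the separation claim.
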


\noindent \textbf{Proof.}  For sake of contradiction, suppose $C_l\setminus C_j$ lies in Region VII.  Then $C_j$ separates the triple $(C_j,C_k,C_l)$, and $C_l\setminus C_j$ lies in the upper half-plane generated by the line that goes through the points $p_5$ and $p_6$.  By Observation \ref{ciseparates}, ($C_j,C_k,C_l)$ has only a strong-counterclockwise orientation and we have a contradiction.  See Figure \ref{R8}.

$\hfill\square$

 Therefore $C_l\setminus C_j$ must lie in Region VIII.  Since $C_i$ separates the triple $(C_i,C_j,C_k)$, which has only a strong-clockwise orientation, $C_i$ must intersect Region VIII and $\vec{l}_{jk}$.  Since $C_l\not\subset conv(C_i\cup C_j)$, $C_l\setminus C_i$ must lie in the lower half plane generated by the line that goes through the left endpoint of $C_i$ and a point on $C_i\cap \vec{r}_{ij}$.  Thus $C_i$ separates $(C_i,C_j,C_l)$ and by Observation \ref{ciseparates}, $(C_i,C_j,C_l)$ has only a strong-clockwise orientation.  See Figure \ref{R9}.  This completes the proof of Claim \ref{ijl} and Lemma \ref{oneorientation}.

 \begin{figure}[h]
  \centering
                        \subfigure[In Region VII, $(C_j,C_k,C_l)$ has a strong-counterclockwise orientation.]{\label{R8}\includegraphics[width=0.25\textwidth]{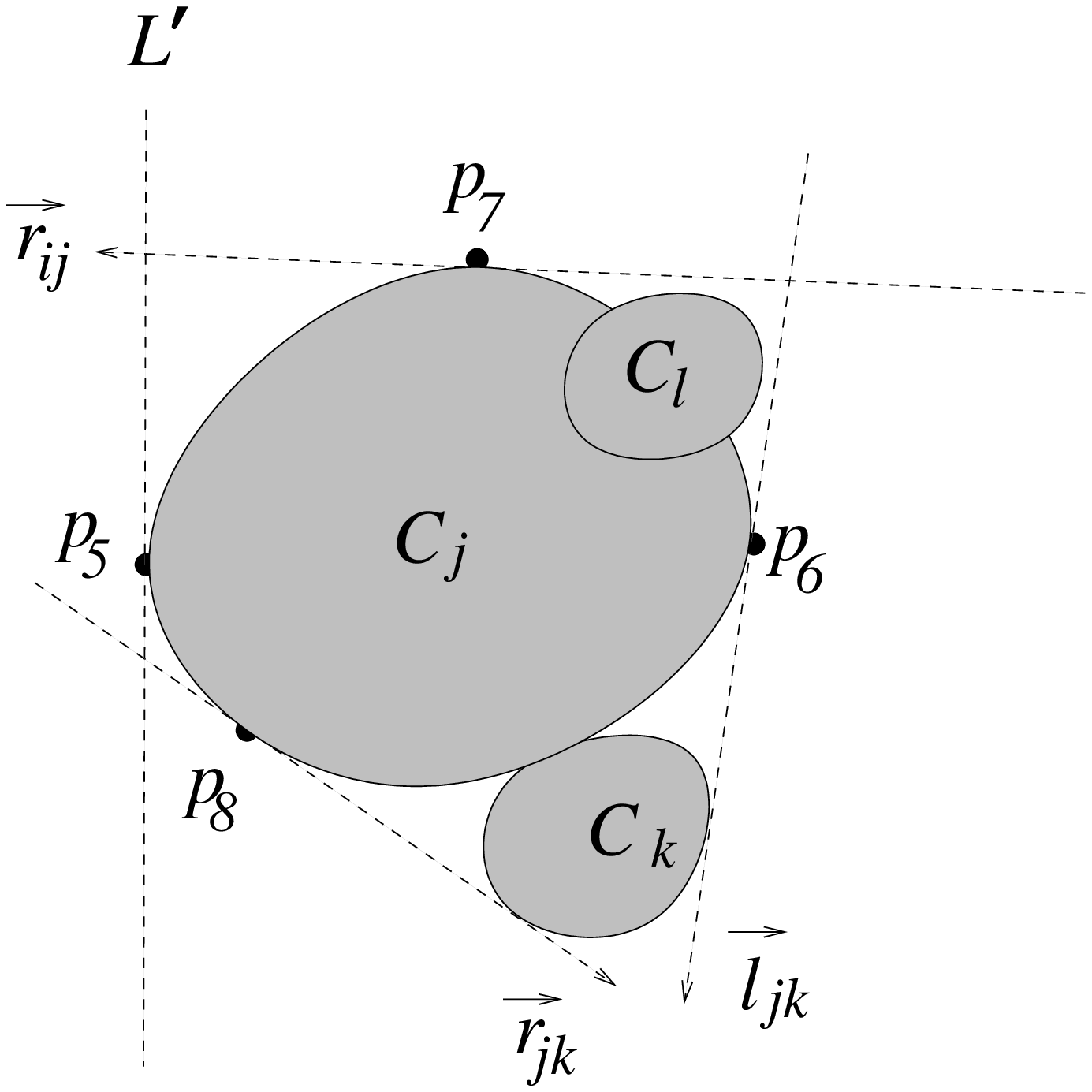}}\hspace{1cm}
                         \subfigure[In Region VIII, $(C_i,C_j,C_l)$ has only a strong-clockwise orientation.]{\label{R9}\includegraphics[width=0.3\textwidth]{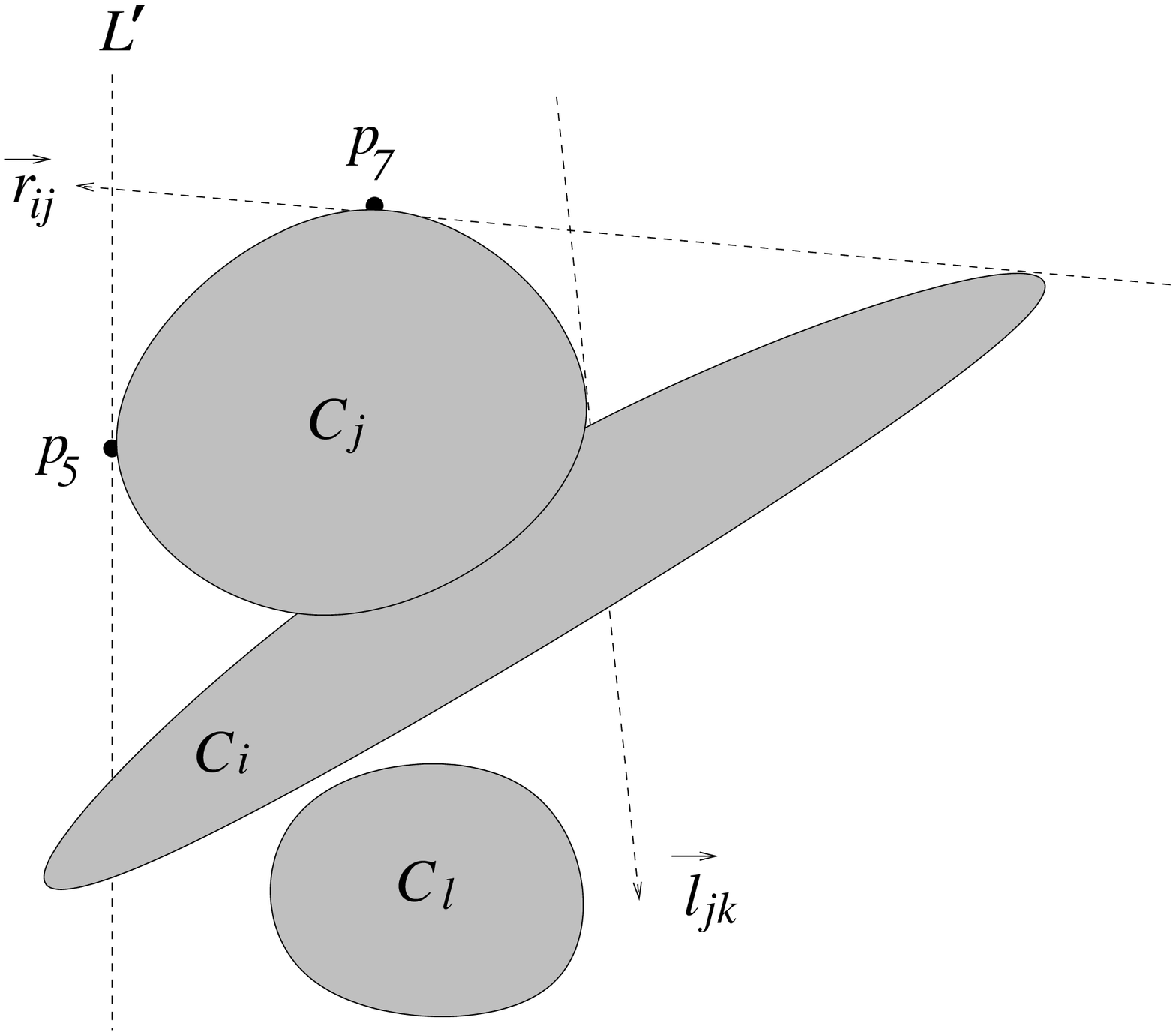}}
                        \caption{Regions VII and VIII.}
  \label{final}
\end{figure}

 $\hfill\square$

\medskip

Now we are ready to define a 3-coloring of the complete $3$-uniform ordered hypergraph $K^3_{\mathcal{C}}$ formed by all triples in $\mathcal{C}=\{C_1,\ldots,C_N\}$. A triple $(C_i,C_j,C_k)$, $i<j<k$ is colored
\begin{enumerate}

\item  {\em red} if it has only a strong-clockwise orientation;

\item  {\em blue} if it has only a strong-counterclockwise orientation;

\item  {\em green} if it has both strong orientations.
\end{enumerate}
Lemmas \ref{bothorientations} and \ref{oneorientation} imply that the above coloring is transitive.

\medskip

Since $N= N_3(3,n)$, the edges of $K^3_{\mathcal{C}}$, i.e., the triples of $\mathcal{C}$, determine a monochromatic monotone path of length $n$. As this coloring is transitive,  Lemma~\ref{complete} implies that the members of this path induce a monochromatic {\em complete} $3$-uniform subhypergraph $K\subseteq K^3_{\mathcal{C}}$. It follows from the definition of the coloring that all hyperedges of $K$ are triples which have the same strong orientations. The proof of Theorem~\ref{convex} is now a direct consequence of the following statement from \cite{alfredo} and Theorem \ref{main1}, which gives an upper bound on $N=N_3(3,n)$.

\begin{lemma}
\label{order}
If a family of noncrossing convex bodies in general position in the plane can be ordered in such a way that every triple has a clockwise orientation (or every triple has a counterclockwise orientation), then all members of the family are in convex position.
\end{lemma}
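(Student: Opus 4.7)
I would prove Lemma \ref{order} by induction on $n$, treating only the clockwise case (the counterclockwise case is symmetric via reflection, which swaps the two orientations). The base case $n=3$ is immediate from the definitions: the clockwise orientation of $(C_1,C_2,C_3)$ directly provides points $q_i\in C_i$ on $\partial\,\text{conv}(C_1\cup C_2\cup C_3)$ appearing in clockwise order, so each $C_i$ touches the boundary of the convex hull and the three bodies are in convex position.

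For the inductive step, I would strengthen the statement to also track the cyclic order of the boundary arcs: for every family of size less than $n$ satisfying the hypothesis, not only is the family in convex position, but the boundary arcs $\gamma_i = C_i\cap\partial H$ (where $H$ is the convex hull of the union) are nonempty, connected, pairwise disjoint, and appear on $\partial H$ in the clockwise cyclic order $\gamma_1,\gamma_2,\ldots$. This strengthening is essential because the cyclic structure itself is needed when a new body is introduced. Applying the strengthened hypothesis to $\{C_1,\ldots,C_{n-1}\}$ yields cyclic arc order $\gamma_1,\ldots,\gamma_{n-1}$ on $\partial H_{n-1}$ with $H_{n-1}=\text{conv}(\bigcup_{i<n}C_i)$, and applying it to $\{C_2,\ldots,C_n\}$ yields cyclic arc order $\gamma'_2,\ldots,\gamma'_n$ on $\partial\,\text{conv}(\bigcup_{i\ge 2}C_i)$; the two overlapping orders will be merged to reconstruct the cyclic order on $\partial H=\partial\,\text{conv}(H_{n-1}\cup C_n)$.

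The main obstacle is ruling out the possibility $C_n\subseteq H_{n-1}$. If $C_n$ were trapped inside $H_{n-1}$, the inductive hypothesis applied to $\{C_2,\ldots,C_n\}$ would place $C_n$ on the boundary of $\text{conv}(\bigcup_{i\ge 2}C_i)\subseteq H_{n-1}$, with its arc $\gamma'_n$ sandwiched between $\gamma'_{n-1}$ and $\gamma'_2$ cyclically. Meanwhile, the clockwise orientation of the triple $(C_1,C_{n-1},C_n)$ produces witness points on $\partial\,\text{conv}(C_1\cup C_{n-1}\cup C_n)$ in clockwise order, and combining this with the noncrossing property and general position forces $C_n$ to extend beyond $H_{n-1}$ near the arc $\gamma_1$, contradicting the assumption. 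The delicate part is a case analysis on how $C_1$, $C_{n-1}$, $C_n$ can be positioned relative to one another (separating lines, common tangents, which one separates the triple), which is analogous in flavor to the arguments in Lemma \ref{oneorientation}.

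Once $C_n\not\subseteq H_{n-1}$ is established, $C_n$ contributes a nonempty connected arc $\gamma_n$ to $\partial H$, and the clockwise orientation of the auxiliary triples $(C_i,C_{n-1},C_n)$ and $(C_1,C_i,C_n)$ for $2\le i\le n-2$ pins down the position of $\gamma_n$ between $\gamma_{n-1}$ and $\gamma_1$ in the clockwise cyclic order, while the previously established arcs $\gamma_i$ remain nonempty (possibly shortened) for the same reason. This completes the induction and hence the proof.
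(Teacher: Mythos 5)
First, note that the paper does not actually prove Lemma~\ref{order}: it is quoted from \cite{alfredo} and used as a black box, so there is no in-paper argument to compare against. Evaluated on its own terms, your induction-with-a-strengthened-hypothesis strategy is a natural one, but as written it has real gaps.

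The most serious problem is the strengthened inductive claim that each arc $\gamma_i=C_i\cap\partial H$ is \emph{connected}. The hypothesis of the lemma only requires every triple to \emph{have} a clockwise orientation, and (as the paper notes) a triple can have a clockwise orientation while simultaneously having a counterclockwise one. By Observation~\ref{bothstrongcj}, that happens exactly when the middle body separates the triple. In fact the lemma is applied in the paper to ``green'' cliques, where \emph{every} triple has both strong orientations and hence every internal body separates the triple it is in the middle of. One can realize such families concretely, e.g.\ by taking thin near-vertical convex bodies at $x=1,\dots,n$ whose heights form a strictly concave sequence: here each body $C_j$ with $1<j<n$ meets $\partial H$ in two disjoint arcs (one on the upper hull, one on the lower hull). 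So your strengthened statement is false, and the induction as formulated cannot be run. You would need to allow disconnected arcs and track a more refined cyclic structure, or else first reduce to the case where no body separates any triple.

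The second gap is the step you yourself flag as ``delicate'': ruling out $C_n\subseteq H_{n-1}$. Applying the inductive hypothesis to $\{C_2,\dots,C_n\}$ only shows that $C_n$ meets the boundary of $\mathrm{conv}\bigl(\bigcup_{i\ge 2}C_i\bigr)$, which is a subset of $H_{n-1}$; it does not by itself show $C_n$ meets $\partial H_{n-1}$, since $C_1$ may cover precisely the portion of that smaller boundary where $C_n$ appears. You indicate that the orientation of the triple $(C_1,C_{n-1},C_n)$ together with a case analysis ``analogous in flavor to Lemma~\ref{oneorientation}'' closes this, but this is the technical heart of the proof and is not carried out. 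Similarly, the assertion that after merging, the old arcs $\gamma_i$ for $2\le i\le n-2$ ``remain nonempty (possibly shortened) for the same reason'' is asserted without an argument; when one takes the convex hull with $C_n$, some previously extremal arcs could in principle be swallowed, and you need the orientations of the triples $(C_i,C_j,C_n)$ to rule this out. As it stands, the proposal is a plausible outline rather than a proof.
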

$\hfill\square$

\section{Concluding Remarks} \label{concluding}

\noindent $1$. We proved estimates on the minimum number of vertices $N_k(q,n)$ of a complete ordered $k$-uniform hypergraph for which every $q$-coloring of its edges has a monochromatic monotone path of length $n$. In particular, for $q \geq 3$ and $n \geq q+2$, we have $$2^{(n/q)^{q-1}} \leq N_3(q,n) \leq n^{n^{q-1}}.$$ It would be interesting to close the gap between the lower and upper bound.

\vspace{0.1cm}
\noindent $2$. We proved that the smallest integer $M(n)$ such that every family of $M(n)$ noncrossing convex bodies in general position in the plane has $n$ members in convex position satisfies $M(n) \leq N_3(3,n) \leq n^{n^2}$. It would be interesting to improve the bound on $M(n)$ further and, in particular, decide whether $M(n)$ grow exponential in $n$.

\vspace{0.1cm}
\noindent $3$. We proved that triples of noncrossing convex bodies with \emph{only} a strong-clockwise orientation have the transitive property. However it is easy to see that triples with a  strong-clockwise orientation do not.  In particular, it is possible to have four convex bodies ordered from left to right according to their left endpoints, such that $(C_1,C_2,C_3)$ and $(C_2,C_3,C_4)$ have a strong-clockwise orientation, but $(C_1,C_3,C_4)$ does not as shown in Figure \ref{134}.

\begin{figure}[h]
\begin{center}
\includegraphics[width=120pt]{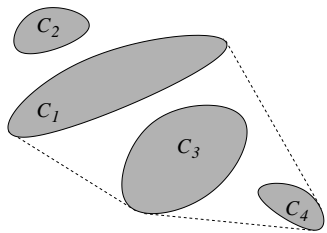}
  \caption{$(C_1,C_3,C_4)$ does not have a strong-clockwise orientation.}
  \label{134}
 \end{center}
\end{figure}

\vspace{0.1cm}
\noindent $4$. We proved that the vertex online Ramsey number $V_2(q,n)$ and the lattice game number $L(q,n)$ are equal, asymptotically determined these numbers for $q=2$, and determined the order of growth of these numbers for fixed $q$. We leave as an open problem to asymptotically determine these numbers for fixed $q>2$.

\vspace{0.1cm}
\noindent $5$. We proved that the size Ramsey number $S_2(q,n)$, which is the minimum number of edges of a graph which is $(q,n)$-path Ramsey, satisfies
$c_qn^{2q-1} \leq S_2(q,n) \leq c_q' n^{2q}$ for some constants $c_q,c_q'$ depending on $n$. It would be interesting to determine the growth of $S_2(q,n)$ for fixed $q$.

\end{document}